\definecolor{light-gray}{gray}{0.75}
\definecolor{ww}{gray}{0.9}
\definecolor{gg}{gray}{0.5}
\definecolor{bb}{gray}{0.1}
\definecolor{darkgreen}{rgb}{0.0,0.7,0.0}
\tikzset{nomorepostaction/.code={\let\tikz@postactions\pgfutil@empty}} 
\newcommand{\iscomp}{\vDash}
\newcommand{\isweakcomp}{\vDash^{*}}
\newtheorem{theorem}{Theorem}[section]
 \newtheorem{corollary}[theorem]{Corollary}
 \newtheorem{prop}[theorem]{Proposition}
 \newtheorem{conjecture}[theorem]{Conjecture}
 \newtheorem{lemma}[theorem]{Lemma}
 \newtheorem{proposition}[theorem]{Proposition}
 \theoremstyle{definition}
 \newtheorem{example}[theorem]{Example}
 \newtheorem{definition}[theorem]{Definition}
\newcommand{\red}[1]{#1}
\definecolor{myblue}{RGB}{80,80,160}
\definecolor{mygreen}{RGB}{80,160,80}
\newcommand\ClassOneParams[2]{\DecRec(\Split_{#1,#2})}
\newcommand{\GETOUT}[1]{}
\newcommand{\thresh}{\mathrm{deg}}
\newcommand{\proofsuppress}[1]{{\red{Temporarily commented out for readability. proofsuppress switch.}}}
\newcommand{\fcp}{\mathsf{f}}
\newcommand{\Words}{\mathsf{Words}}
\newcommand{\Hex}{\mathsf{Hex}}
\newcommand{\itcbounce}{\mathsf{itcbounce}}
\newcommand{\schbounce}{\mathsf{bounce}^{\mathsf{Sch}}}
\newcommand{\itcBounce}{\mathsf{itcBounce}}
\newcommand{\ctibounce}{\mathsf{ctibounce}}
\newcommand{\ctiBounce}{\mathsf{ctiBounce}}
\newcommand{\bouncepath}{\mathsf{topple}}
\newcommand{\topplebounce}{\mathsf{wtopple}}
\newcommand{\area}{\mathsf{area}}
\newcommand{\smee}[1]{\tiny{#1}}
\newcommand{\Schroder}{\mathsf{Schr\ddot{o}der}}
\newcommand{\ToppleRecord}{\mathsf{Topple}}
\newcommand{\height}{\mathsf{height}}
\newcommand{\wonkypoly}{\mathsf{Sawtooth}}
\newcommand{\wonky}{sawtooth polyomino}
\newcommand{\wonkys}{sawtooth polyominoes}
\newcommand{\ee}{\mathsf{e}}
\newcommand{\ww}{\mathsf{w}}
\newcommand{\nn}{\mathsf{n}}
\renewcommand{\ss}{\mathsf{s}}
\newcommand{\se}{\mathsf{se}} 
\newcommand{\nw}{\mathsf{nw}}
\newcommand{\Upper}{\mathsf{Upper}}
\newcommand{\Lower}{\mathsf{Lower}}
\newcommand{\sts}{{\varphi}} 
\newcommand{\stc}{{\phi}} 
\newcommand{\level}{\mathsf{level}}
\newcommand{\Peakloop}{\mathsf{Peak}^{\mathsf{loop}}}
\newcommand{\PeakDyck}{\mathsf{Peak}^{\mathsf{Dyck}}}
\newcommand{\PeakSchroder}{\mathsf{Peak}^{\mathsf{Sch}}}
\newcommand{\itc}{\mathrm{itc}}
\newcommand{\ITC}{\mathsf{ITC}}
\newcommand{\ehkk}{\mathrm{ehkk}}
\newcommand{\unlabelledsplitgraph}{
\begin{tikzpicture}[
my node style/.style={circle,fill,draw,inner sep=1.5pt}
]
  \node[my node style]    (v0) at (0,1) {};
  \node[my node style]  (v1) at (.866,.5)  {};
  \node[my node style]  (v4) at (-.866,.5)  {};
  \node[my node style]  (v2) at (.866,-.5)  {};
  \node[my node style]  (v3) at (-.866,-.5)  {};
  \node[my node style]  (v5) at (0,-1)  {};
  \node[my node style]    (v6) at (4,1) {};
  \node[my node style]    (v7) at (4,0) {};
  \node[my node style]    (v8) at (4,-1) {};
  \foreach \i/\j in {0/1,0/2,0/3,0/4,0/5,1/2,1/3,1/4,1/5,2/3,2/4,2/5,3/4,3/5,4/5}
    \path (v\i) edge (v\j);
  \path (v0) edge [bend right=0] (v6);
  \path (v0) edge [bend right=0] (v7);
  \path (v0) edge [bend left=10] (v8);
  \path (v1) edge [bend right=0] (v6);
  \path (v1) edge [bend right=0] (v7);
  \path (v1) edge [bend left=0] (v8);
  \path (v2) edge [bend right=0] (v6);
  \path (v2) edge [bend right=0] (v7);
  \path (v2) edge [bend left=0] (v8);
  \path (v3) edge [bend right=0] (v6);
  \path (v3) edge [bend left=5] (v7);
  \path (v3) edge [bend right=5] (v8);
  \path (v4) edge [bend left=2] (v6);
  \path (v4) edge [bend right=7] (v7);
  \path (v4) edge [bend right=0] (v8);
  \path (v5) edge [bend right=10] (v6);
  \path (v5) edge [bend right=0] (v7);
  \path (v5) edge [bend right=0] (v8);
  \draw[rounded corners, dashed] (-0.25,0.75) rectangle (0.25,1.25);
\end{tikzpicture}
}
\def\generalfill{\tikzfillbetween[of=A and B]{light-gray,opacity=0.5}}
\newcommand{\newpolyone}{
\begin{tikzpicture}[scale=0.55, line width=1.0pt]
\draw[rounded corners=2pt,name path=A] (0.025,-0.025) -- ++(2,0) -- ++(0,1) -- ++(2,0) -- ++(0,2) -- ++(1,0) -- ++(0,2);
\draw[rounded corners=2pt,name path=B] (-0.025,0.025) -- ++(0,3) -- ++(1,-1) -- ++(0,1) -- ++(1,-1) -- ++(0,2) -- ++(1,-1) -- ++(0,3) -- ++(1,-1) -- ++(0,1) -- ++ (1,-1);
\generalfill;
\draw[step=1cm,very thin] (-0.2,-0.2) grid (5.2,7.2);
\end{tikzpicture}
}
\newcommand{\newpolyonebouncealt}{
\begin{tikzpicture}[scale=0.55,
line width=1.0pt
]
\draw[rounded corners=2pt,name path=A] (0.025,-0.025) -- ++(2,0) -- ++(0,1) -- ++(2,0) -- ++(0,2) -- ++(1,0) -- ++(0,2);
\draw[rounded corners=2pt,name path=B] (-0.025,0.025) -- ++(0,3) -- ++(1,-1) -- ++(0,1) -- ++(1,-1) -- ++(0,2) -- ++(1,-1) -- ++(0,3) -- ++(1,-1) -- ++(0,1) -- ++(1,-1);
\generalfill;
\draw[rounded corners=1pt,red,opacity=1,line width=1.5pt ] (4,5) -- ++(0,-2) -- ++(-1,1) -- ++(0,-3) -- ++(-1,1) -- ++(0,-1) -- ++(-1,1) -- ++(0,-2) -- ++(-1,1) -- ++(0,-1);
\draw[red,opacity=1,line width=1.5pt] (3.8,4)--(4,3.8)--(4.2,4);
\draw[step=1cm,very thin] (-0.2,-0.2) grid (5.2,7.2);
\end{tikzpicture}
}
\newcommand{\newpolyonebouncealttwo}{
\begin{tikzpicture}[scale=0.55,
line width=1.0pt
]
\draw[rounded corners=2pt,name path=A] (0.025,-0.025) -- ++(2,0) -- ++(0,1) -- ++(2,0) -- ++(0,2) -- ++(1,0) -- ++(0,2);
\draw[rounded corners=2pt,name path=B] (-0.025,0.025) -- ++(0,3) -- ++(1,-1) -- ++(0,1) -- ++(1,-1) -- ++(0,2) -- ++(1,-1) -- ++(0,3) -- ++(1,-1) -- ++(0,1) -- ++(1,-1);
\generalfill;
\draw[rounded corners=1pt,darkgreen,opacity=1,line width=1.5pt ] (4,5) -- ++(0,-2) -- ++(-1,1) -- ++(0,-3) -- ++(-1,1) -- ++(0,-1) -- ++(-1,1) -- ++(0,-2) -- ++(-1,1) -- ++(0,-1);
\draw[darkgreen,opacity=1,line width=1.5pt] (3.8,4)--(4,3.8)--(4.2,4);
\draw[step=1cm,very thin] (-0.2,-0.2) grid (5.2,7.2);
\end{tikzpicture}
}
\newcommand{\newpolytwo}{
\begin{tikzpicture}[scale=0.55,
line width=1.0pt
]
\draw[rounded corners=2pt,name path=A] (0.025,-0.025) -- ++(2,0) -- ++(0,1) -- ++(1,0) -- ++(0,1) -- ++(1,0) -- ++(0,1) -- ++(1,0) -- +(0,2);
\draw[rounded corners=2pt,name path=B] (-0.025,0.025) -- ++(0,5) -- ++(2,-2) -- ++(0,1) -- ++(1,-1) -- ++(0,4) -- ++(1,-1) -- ++(1,-1);
\generalfill;
\draw[step=1cm,black,very thin] (-0.2,-0.2) grid (5.2,7.2);
\end{tikzpicture}
}
\newcommand{\newpolytwobouncealt}{
\begin{tikzpicture}[scale=0.55,
line width=1.0pt
]
\draw[rounded corners=2pt,name path=A] (0.025,-0.025) -- ++(2,0) -- ++(0,1) -- ++(1,0) -- ++(0,1) -- ++(1,0) -- ++(0,1) -- ++(1,0) -- +(0,2);
\draw[rounded corners=2pt,name path=B] (-0.025,0.025) -- ++(0,5) -- ++(2,-2) -- ++(0,1) -- ++(1,-1) -- ++(0,4) -- ++(1,-1) -- ++(1,-1);
\generalfill;
\draw[rounded corners=1pt,red,opacity=1,line width=1.5pt ] (4,5-0.025) -- ++(-1,1) -- ++(0,-4) -- ++(-1,1) -- ++(0,-2) -- ++(-2,2) -- ++(0,-3);
\draw[red,opacity=1,line width=1.5pt] (3.5,5.2)--(3.5,5.5)--(3.8,5.5);
\draw[step=1cm,gray,very thin] (-0.2,-0.2) grid (5.2,7.2);
\end{tikzpicture}
}
\newcommand{\newpolytwobouncealttwo}{
\begin{tikzpicture}[scale=0.55,
line width=1.0pt
]
\draw[rounded corners=2pt,name path=A] (0.025,-0.025) -- ++(2,0) -- ++(0,1) -- ++(1,0) -- ++(0,1) -- ++(1,0) -- ++(0,1) -- ++(1,0) -- +(0,2);
\draw[rounded corners=2pt,name path=B] (-0.025,0.025) -- ++(0,5) -- ++(2,-2) -- ++(0,1) -- ++(1,-1) -- ++(0,4) -- ++(1,-1) -- ++(1,-1);
\generalfill;
\draw[rounded corners=1pt,darkgreen,opacity=1,line width=1.5pt ] (4,5-0.025) -- ++(0,-2) -- ++(-1,+1) -- ++(0,-2) -- ++(-1,1) -- ++(0,-2) -- ++(-2,2) -- ++(0,-3);
\draw[darkgreen,opacity=1,line width=1.5pt] (3.8,4)--(4,3.8)--(4.2,4);
\draw[step=1cm,very thin] (-0.2,-0.2) grid (5.2,7.2);
\end{tikzpicture}
}
\def\mysk{0.35}
\def\qwe{-0.5}
\def\abh{6.5}
\def\ach{5.7}
\def\commonframe{\draw[step=1cm,very thin] (-0.2,-0.2) grid (3.2,5.2)}
\def\redop{0.8}
\newcommand{\hreeone}{
\begin{tikzpicture}[scale=\mysk, line width=2.0pt ]
\draw[rounded corners=2pt,name path=A] (0.025,-0.025) -- ++(3,0) -- ++(0,2);
\draw[rounded corners=2pt,name path=B] (-0.025,0.025) -- ++(0,5) -- ++(3,-3);
\generalfill;
\draw[rounded corners=1pt,red,opacity=\redop ] (2,2-0.05) -- ++(-2,2) -- ++(0,-4);
\node[anchor=west] at (\qwe,\abh) {\smee{$(3{,}3{;}2{,}2)$}};
\node[anchor=west] at (\qwe,\ach) {\smee{$\langle 2{,}2 \rangle$}};
\commonframe;
\end{tikzpicture}
}
\newcommand{\hreetwo}{
\begin{tikzpicture}[scale=\mysk, line width=2.0pt ]
\draw[rounded corners=2pt,name path=A] (0.025,-0.025) -- ++(2,0) -- ++(0,1) -- ++(1,0) -- ++(0,1);
\draw[rounded corners=2pt,name path=B] (-0.025,0.025) -- ++(0,5) -- ++(3,-3);
\generalfill;
\draw[rounded corners=1pt,red,opacity=\redop ] (2,2-0.05) -- ++(-2,2) -- ++(0,-4);
\node[anchor=west] at (\qwe,\abh) {\smee{$(3{,}3{;}2{,}1)$}};
\node[anchor=west] at (\qwe,\ach) {\smee{$\langle 2{,}2 \rangle$}};
\commonframe;
\end{tikzpicture}
}
\newcommand{\hreethree}{
\begin{tikzpicture}[scale=\mysk, line width=2.0pt ]
\draw[rounded corners=2pt,name path=A] (0.025,-0.025) -- ++(1,0) -- ++(0,1) -- ++(2,0) -- ++(0,1);
\draw[rounded corners=2pt,name path=B] (-0.025,0.025) -- ++(0,5) -- ++(3,-3);
\generalfill;
\draw[rounded corners=1pt,red,opacity=\redop ] (2,2-0.05) -- ++(-2,2) -- ++(0,-4);
\node[anchor=west] at (\qwe,\abh) {\smee{$(3{,}3{;}2{,}0)$}};
\node[anchor=west] at (\qwe,\ach) {\smee{$\langle 2{,}2 \rangle$}};
\commonframe;
\end{tikzpicture}
}
\newcommand{\hreefour}{
\begin{tikzpicture}[scale=\mysk, line width=2.0pt ]
\draw[rounded corners=2pt,name path=A] (0.025,-0.025) -- ++(2,0) -- ++(0,2) -- ++(1,0);
\draw[rounded corners=2pt,name path=B] (-0.025,0.025) -- ++(0,5) -- ++(3,-3);
\generalfill;
\draw[rounded corners=1pt,red,opacity=\redop ] (2,2-0.05) -- ++(-2,2) -- ++(0,-4);
\node[anchor=west] at (\qwe,\abh) {\smee{$(3{,}3{;}1{,}1)$}};
\node[anchor=west] at (\qwe,\ach) {\smee{$\langle 2{,}2 \rangle$}};
\commonframe;
\end{tikzpicture}
}
\newcommand{\hreefive}{
\begin{tikzpicture}[scale=\mysk, line width=2.0pt ]
\draw[rounded corners=2pt,name path=A] (0.025,-0.025) --++(1,0)  -- ++(0,1) -- ++(1,0) -- ++(0,1) -- ++(1,0);
\draw[rounded corners=2pt,name path=B] (-0.025,0.025) -- ++(0,5) -- ++(3,-3);
\generalfill;
\draw[rounded corners=1pt,red,opacity=\redop ] (2,2-0.05) -- ++(-2,2) -- ++(0,-4);
\node[anchor=west] at (\qwe,\abh) {\smee{$(3{,}3{;}1{,}0)$}};
\node[anchor=west] at (\qwe,\ach) {\smee{$\langle 2{,}2 \rangle$}};
\commonframe;
\end{tikzpicture}
}
\newcommand{\hreesix}{
\begin{tikzpicture}[scale=\mysk, line width=2.0pt ]
\draw[rounded corners=2pt,name path=A] (0.025,-0.025) -- ++(1,0) -- ++(0,2) -- ++(2,0);
\draw[rounded corners=2pt,name path=B] (-0.025,0.025) -- ++(0,5) -- ++(3,-3);
\generalfill;
\draw[rounded corners=1pt,red,opacity=\redop ] (2,2-0.05) -- ++(-2,2) -- ++(0,-4);
\node[anchor=west] at (\qwe,\abh) {\smee{$(3{,}3{;}0{,}0)$}};
\node[anchor=west] at (\qwe,\ach) {\smee{$\langle 2{,}2 \rangle$}};
\commonframe;
\end{tikzpicture}
}
\newcommand{\hreeseven}{
\begin{tikzpicture}[scale=\mysk, line width=2.0pt ]
\draw[rounded corners=2pt,name path=A] (0.025,-0.025) -- ++(1,0) -- ++(2,0) -- ++(0,2);
\draw[rounded corners=2pt,name path=B] (-0.025,0.025) -- ++(0,1) -- ++(0,3) -- ++(1,-1) -- ++(0,1) -- ++(1,-1) -- ++(1,-1);
\generalfill;
\draw[rounded corners=1pt,red,opacity=\redop ] (2,2-0.05) -- ++(-1,1) -- ++(0,-3) -- ++(-1,1) -- ++(0,-1);
\node[anchor=west] at (\qwe,\abh) {\smee{$(3{,}2{;}2{,}2)$}};
\node[anchor=west] at (\qwe,\ach) {\smee{$\langle 1{,}2{,}1{,}0 \rangle$}};
\commonframe;
\end{tikzpicture}
}
\newcommand{\hreeeight}{
\begin{tikzpicture}[scale=\mysk, line width=2.0pt ]
\draw[rounded corners=2pt,name path=A] (0.025,-0.025) -- ++(1,0) -- ++(1,0) -- ++(0,1) -- ++(1,0) -- ++(0,1);
\draw[rounded corners=2pt,name path=B] (-0.025,0.025) -- ++(0,4) -- ++(1,-1) -- ++(0,1) -- ++(2,-2);
\generalfill;
\draw[rounded corners=1pt,red,opacity=\redop ] (2,2-0.05) -- ++(-1,1) -- ++(0,-3) -- ++(-1,1) -- ++(0,-1);
\node[anchor=west] at (\qwe,\abh) {\smee{$(3{,}2{;}2{,}1)$}};
\node[anchor=west] at (\qwe,\ach) {\smee{$\langle 1{,}2{,}1{,}0 \rangle$}};
\commonframe;
\end{tikzpicture}
}
\newcommand{\hreenine}{
\begin{tikzpicture}[scale=\mysk, line width=2.0pt ]
\draw[rounded corners=2pt,name path=A] (0.025,-0.025) -- ++(1,0) -- ++(0,1) -- ++(2,0) -- ++(0,1);
\draw[rounded corners=2pt,name path=B] (-0.025,0.025) -- ++(0,4) -- ++(1,-1) -- ++(0,1) -- ++(2,-2);
\generalfill;
\draw[rounded corners=1pt,red,opacity=\redop ] (2,2-0.05) -- ++(-1,1) -- ++(0,-2) -- ++(-1,1) -- ++(0,-2);
\node[anchor=west] at (\qwe,\abh) {\smee{$(3{,}2{;}2{,}0)$}};
\node[anchor=west] at (\qwe,\ach) {\smee{$\langle 1{,}1{,}1{,}1 \rangle$}};
\commonframe;
\end{tikzpicture}
}
\newcommand{\hreeten}{
\begin{tikzpicture}[scale=\mysk, line width=2.0pt ]
\draw[rounded corners=2pt,name path=A] (0.025,-0.025) --++ (1,0) -- ++(1,0) -- ++(0,2) -- ++(1,0);
\draw[rounded corners=2pt,name path=B] (-0.025,0.025) -- ++(0,4) -- ++(1,-1) -- ++(0,1) -- ++(2,-2);
\generalfill;
\draw[rounded corners=1pt,red,opacity=\redop ] (2,2-0.05) -- ++(-1,1) -- ++(0,-3) -- ++(-1,1) -- ++(0,-1);
\node[anchor=west] at (\qwe,\abh) {\smee{$(3{,}2{;}1{,}1)$}};
\node[anchor=west] at (\qwe,\ach) {\smee{$\langle 1{,}2{,}1{,}0 \rangle$}};
\commonframe;
\end{tikzpicture}
}
\newcommand{\hreeeleven}{
\begin{tikzpicture}[scale=\mysk, line width=2.0pt ]
\draw[rounded corners=2pt,name path=A] (0.025,-0.025) -- ++(1,0) -- ++(0,1) -- ++(1,0) -- ++(0,1) -- ++(1,0);
\draw[rounded corners=2pt,name path=B] (-0.025,0.025) -- ++(0,4) -- ++(1,-1) -- ++(0,1) -- ++(2,-2);
\generalfill;
\draw[rounded corners=1pt,red,opacity=\redop ] (2,2-0.05) -- ++(-1,1) -- ++(0,-2) -- ++(-1,1) -- ++(0,-2);
\node[anchor=west] at (\qwe,\abh) {\smee{$(3{,}2{;}1{,}0)$}};
\node[anchor=west] at (\qwe,\ach) {\smee{$\langle 1{,}1{,}1{,}1 \rangle$}};
\commonframe;
\end{tikzpicture}
}
\newcommand{\hreetwelve}{
\begin{tikzpicture}[scale=\mysk, line width=2.0pt ]
\draw[rounded corners=2pt,name path=A] (0.025,-0.025) -- ++(1,0)  -- ++(0,2) -- ++(2,0);
\draw[rounded corners=2pt,name path=B] (-0.025,0.025) -- ++(0,4) -- ++(1,-1) -- ++(0,1) -- ++(2,-2);
\generalfill;
\draw[rounded corners=1pt,red,opacity=\redop ] (2,2-0.05) -- ++(-1,1) -- ++(0,-1) -- ++(-1,1) -- ++(0,-3);
\node[anchor=west] at (\qwe,\abh) {\smee{$(3{,}2{;}0{,}0)$}};
\node[anchor=west] at (\qwe,\ach) {\smee{$\langle 1{,}0{,}1{,}2 \rangle$}};
\commonframe;
\end{tikzpicture}
}
\newcommand{\hreethirteen}{
\begin{tikzpicture}[scale=\mysk, line width=2.0pt ]
\draw[rounded corners=2pt,name path=A] (0.025,-0.025) -- ++(1,0) -- ++(2,0) -- ++(0,2);
\draw[rounded corners=2pt,name path=B] (-0.025,0.025) -- ++(0,3) -- ++(1,-1) -- ++(0,2) -- ++(2,-2);
\generalfill;
\draw[rounded corners=1pt,red,opacity=\redop ] (2,2-0.05) -- ++(-1,1) -- ++(0,-1) -- ++(-1,1) -- ++(0,-3);
\draw[rounded corners=1pt,red,opacity=\redop ] (2,2-0.05) -- ++(-1,1) -- ++(0,-3) -- ++(-1,1) -- ++(0,-1);
\node[anchor=west] at (\qwe,\abh) {\smee{$(3{,}1{;}2{,}2)$}};
\node[anchor=west] at (\qwe,\ach) {\smee{$\langle 1{,}2{,}1{,}0 \rangle$}};
\commonframe;
\end{tikzpicture}
}
\newcommand{\hreefourteen}{
\begin{tikzpicture}[scale=\mysk, line width=2.0pt ]
\draw[rounded corners=2pt,name path=A] (0.025,-0.025) -- ++(2,0) -- ++(0,1) -- ++(1,0) -- ++(0,1);
\draw[rounded corners=2pt,name path=B] (-0.025,0.025) -- ++(0,3) -- ++(1,-1) -- ++(0,2) -- ++(2,-2);
\generalfill;
\draw[rounded corners=1pt,red,opacity=\redop ] (2,2-0.05) -- ++(-1,1) -- ++(0,-3) -- ++(-1,1) -- ++(0,-1);
\node[anchor=west] at (\qwe,\abh) {\smee{$(3{,}1{;}2{,}1)$}};
\node[anchor=west] at (\qwe,\ach) {\smee{$\langle 1{,}2{,}1{,}0 \rangle$}};
\commonframe;
\end{tikzpicture}
}
\newcommand{\hreefifteen}{
\begin{tikzpicture}[scale=\mysk, line width=2.0pt ]
\draw[rounded corners=2pt,name path=A] (0.025,-0.025) -- ++(1,0) -- ++(0,1) -- ++(2,0) -- ++(0,1);
\draw[rounded corners=2pt,name path=B] (-0.025,0.025) -- ++(0,3) -- ++(1,-1) -- ++(0,2) -- ++(2,-2);
\generalfill;
\draw[rounded corners=1pt,red,opacity=\redop ] (2,2-0.05) -- ++(-1,1) -- ++(0,-2) -- ++(-1,1) -- ++(0,-2);
\node[anchor=west] at (\qwe,\abh) {\smee{$(3{,}1{;}2{,}0)$}};
\node[anchor=west] at (\qwe,\ach) {\smee{$\langle 1{,}1{,}1{,}1 \rangle$}};
\commonframe;
\end{tikzpicture}
}
\newcommand{\hreesixteen}{
\begin{tikzpicture}[scale=\mysk, line width=2.0pt ]
\draw[rounded corners=2pt,name path=A] (0.025,-0.025) -- ++(2,0) -- ++(0,2) -- ++(1,0);
\draw[rounded corners=2pt,name path=B] (-0.025,0.025) -- ++(0,3) -- ++(1,-1) -- ++(0,2) -- ++(2,-2);
\generalfill;
\draw[rounded corners=1pt,red,opacity=\redop ] (2,2-0.05) -- ++(-1,1) -- ++(0,-3) -- ++(-1,1) -- ++(0,-1);
\node[anchor=west] at (\qwe,\abh) {\smee{$(3{,}1{;}1{,}1)$}};
\node[anchor=west] at (\qwe,\ach) {\smee{$\langle 1{,}2{,}1{,}0 \rangle$}};
\commonframe;
\end{tikzpicture}
}
\newcommand{\hreeseventeen}{
\begin{tikzpicture}[scale=\mysk, line width=2.0pt ]
\draw[rounded corners=2pt,name path=A] (0.025,-0.025) -- ++(1,0) -- ++(0,1) -- ++(1,0) -- ++(0,1) -- ++(1,0);
\draw[rounded corners=2pt,name path=B] (-0.025,0.025) -- ++(0,3) -- ++(1,-1) -- ++(0,2) -- ++(2,-2);
\generalfill;
\draw[rounded corners=1pt,red,opacity=\redop ] (2,2-0.05) -- ++(-1,1) -- ++(0,-2) -- ++(-1,1) -- ++(0,-2);
\node[anchor=west] at (\qwe,\abh) {\smee{$(3{,}1{;}1{,}0)$}};
\node[anchor=west] at (\qwe,\ach) {\smee{$\langle 1{,}1{,}1{,}1 \rangle$}};
\commonframe;
\end{tikzpicture}
}
\newcommand{\hreeeighteen}{
\begin{tikzpicture}[scale=\mysk, line width=2.0pt ]
\draw[rounded corners=2pt,name path=A] (0.025,-0.025) -- ++(1,0) -- ++(2,0) -- ++(0,2);
\draw[rounded corners=2pt,name path=B] (-0.025,0.025) -- ++(0,2) -- ++(1,-1) -- ++(0,3) -- ++(2,-2);
\generalfill;
\draw[rounded corners=1pt,red,opacity=\redop ] (2,2-0.05) -- ++(-1,1) -- ++(0,-3) -- ++(-1,1) -- ++(0,-1);
\node[anchor=west] at (\qwe,\abh) {\smee{$(3{,}0{;}2{,}2)$}};
\node[anchor=west] at (\qwe,\ach) {\smee{$\langle 1{,}2{,}1{,}0 \rangle$}};
\commonframe;
\end{tikzpicture}
}
\newcommand{\hreenineteen}{
\begin{tikzpicture}[scale=\mysk, line width=2.0pt ]
\draw[rounded corners=2pt,name path=A] (0.025,-0.025) -- ++(2,0) -- ++(0,1) -- ++(1,0) -- ++(0,1);
\draw[rounded corners=2pt,name path=B] (-0.025,0.025) -- ++(0,2) -- ++(1,-1) -- ++(0,3) -- ++(2,-2);
\generalfill;
\draw[rounded corners=1pt,red,opacity=\redop ] (2,2-0.05) -- ++(-1,1) -- ++(0,-3) -- ++(-1,1) -- ++(0,-1);
\node[anchor=west] at (\qwe,\abh) {\smee{$(3{,}0{;}2{,}1)$}};
\node[anchor=west] at (\qwe,\ach) {\smee{$\langle 1{,}2{,}1{,}0 \rangle$}};
\commonframe;
\end{tikzpicture}
}
\newcommand{\hreetwenty}{
\begin{tikzpicture}[scale=\mysk, line width=2.0pt ]
\draw[rounded corners=2pt,name path=A] (0.025,-0.025) -- ++(2,0) -- ++(0,2) -- ++(1,0);
\draw[rounded corners=2pt,name path=B] (-0.025,0.025) -- ++(0,2) -- ++(1,-1) -- ++(0,3) -- ++(2,-2);
\generalfill;
\draw[rounded corners=1pt,red,opacity=\redop ] (2,2-0.05) -- ++(-1,1) -- ++(0,-3) -- ++(-1,1) -- ++(0,-1);
\node[anchor=west] at (\qwe,\abh) {\smee{$(3{,}0{;}1{,}1)$}};
\node[anchor=west] at (\qwe,\ach) {\smee{$\langle 1{,}2{,}1{,}0 \rangle$}};
\commonframe;
\end{tikzpicture}
}
\newcommand{\hreetwentyone}{
\begin{tikzpicture}[scale=\mysk, line width=2.0pt ]
\draw[rounded corners=2pt,name path=A] (0.025,-0.025) -- ++(1,0) -- ++(2,0) -- ++(0,2);
\draw[rounded corners=2pt,name path=B] (-0.025,0.025) -- ++(0,4) -- ++(2,-2) -- ++(0,1) -- ++(1,-1);
\generalfill;
\draw[rounded corners=1pt,red,opacity=\redop ] (2,2-0.05) -- ++(0,-2) -- ++(-2,2) -- ++(0,-2);
\node[anchor=west] at (\qwe,\abh) {\smee{$(2{,}2{;}2{,}2)$}};
\node[anchor=west] at (\qwe,\ach) {\smee{$\langle 0{,}2{,}2{,}0 \rangle$}};
\commonframe;
\end{tikzpicture}
}
\newcommand{\hreetwentytwo}{
\begin{tikzpicture}[scale=\mysk, line width=2.0pt ]
\draw[rounded corners=2pt,name path=A] (0.025,-0.025) -- ++(2,0) -- ++(0,1) -- ++(1,0) -- ++(0,1);
\draw[rounded corners=2pt,name path=B] (-0.025,0.025) -- ++(0,4) -- ++(2,-2) -- ++(0,1) -- ++(1,-1);
\generalfill;
\draw[rounded corners=1pt,red,opacity=\redop ] (2,2-0.05) -- ++(0,-1) -- ++(-2,2) -- ++(0,-3);
\node[anchor=west] at (\qwe,\abh) {\smee{$(2{,}2{;}2{,}1)$}};
\node[anchor=west] at (\qwe,\ach) {\smee{$\langle 0{,}1{,}2{,}1 \rangle$}};
\commonframe;
\end{tikzpicture}
}
\newcommand{\hreetwentythree}{
\begin{tikzpicture}[scale=\mysk, line width=2.0pt ]
\draw[rounded corners=2pt,name path=A] (0.025,-0.025) -- ++(1,0) -- ++(0,1) -- ++(2,0) -- ++(0,1);
\draw[rounded corners=2pt,name path=B] (-0.025,0.025) -- ++(0,4) -- ++(2,-2) -- ++(0,1) -- ++(1,-1);
\generalfill;
\draw[rounded corners=1pt,red,opacity=\redop ] (2,2-0.05) -- ++(0,-1) -- ++(-2,2) -- ++(0,-3);
\node[anchor=west] at (\qwe,\abh) {\smee{$(2{,}2{;}2{,}0)$}};
\node[anchor=west] at (\qwe,\ach) {\smee{$\langle 0{,}1{,}2{,}1 \rangle$}};
\commonframe;
\end{tikzpicture}
}
\newcommand{\hreetwentyfour}{
\begin{tikzpicture}[scale=\mysk, line width=2.0pt ]
\draw[rounded corners=2pt,name path=A] (0.025,-0.025) -- ++(3,0) -- ++(0,2);
\draw[rounded corners=2pt,name path=B] (-0.025,0.025) -- ++(0,3) -- ++(1,-1) -- ++(0,1) -- ++(1,-1) -- ++(0,1) -- ++(1,-1);
\generalfill;
\draw[rounded corners=1pt,red,opacity=\redop ] (2,2-0.05) -- ++(0,-2) -- ++(-2,2) -- ++(0,-2);
\node[anchor=west] at (\qwe,\abh) {\smee{$(2{,}1{;}2{,}2)$}};
\node[anchor=west] at (\qwe,\ach) {\smee{$\langle 0{,}1{,}2{,}1 \rangle$}};
\commonframe;
\end{tikzpicture}
}
\newcommand{\hreetwentyfive}{
\begin{tikzpicture}[scale=\mysk, line width=2.0pt ]
\draw[rounded corners=2pt,name path=A] (0.025,-0.025) -- ++(2,0) -- ++(0,1) -- ++(1,0) -- ++(0,1);
\draw[rounded corners=2pt,name path=B] (-0.025,0.025) -- ++(0,3) -- ++(1,-1) -- ++(0,1) -- ++(1,-1) -- ++(0,1) -- ++(1,-1);
\generalfill;
\draw[rounded corners=1pt,red,opacity=\redop ] (2,2-0.05) -- ++(0,-1) -- ++(-1,1) -- ++(0,-2) -- ++(-1,1) -- ++(0,-1);
\node[anchor=west] at (\qwe,\abh) {\smee{$(2{,}1{;}2{,}1)$}};
\node[anchor=west] at (\qwe,\ach) {\smee{$\langle 0{,}1{,}1{,}1{,}1{,}0 \rangle$}};
\commonframe;
\end{tikzpicture}
}
\newcommand{\hreetwentysix}{
\begin{tikzpicture}[scale=\mysk, line width=2.0pt ]
\draw[rounded corners=2pt,name path=A] (0.025,-0.025) -- ++(1,0) -- ++(0,1) -- ++(2,0) -- ++(0,1);
\draw[rounded corners=2pt,name path=B] (-0.025,0.025) -- ++(0,3) -- ++(1,-1) -- ++(0,1) -- ++(1,-1) -- ++(0,1) -- ++(1,-1);
\generalfill;
\draw[rounded corners=1pt,red,opacity=\redop ] (2,2-0.05) -- ++(0,-1) -- ++(-1,1) -- ++(0,-1) -- ++(-1,1) -- ++(0,-2);
\node[anchor=west] at (\qwe,\abh) {\smee{$(2{,}1{;}2{,}0)$}};
\node[anchor=west] at (\qwe,\ach) {\smee{$\langle 0{,}1{,}1{,}0{,}1{,}1 \rangle$}};
\commonframe;
\end{tikzpicture}
}
\newcommand{\hreetwentyseven}{
\begin{tikzpicture}[scale=\mysk, line width=2.0pt ]
\draw[rounded corners=2pt,name path=A] (0.025,-0.025) -- ++(3,0) -- ++(0,2);
\draw[rounded corners=2pt,name path=B] (-0.025,0.025) -- ++(0,2) -- ++(1,-1) -- ++(0,2) -- ++(1,-1) -- ++(0,1) -- ++(1,-1);
\generalfill;
\draw[rounded corners=1pt,red,opacity=\redop ] (2,2-0.05) -- ++(0,-2) -- ++(-1,1) -- ++(0,-1) -- ++(-1,1) -- ++(0,-1);
\node[anchor=west] at (\qwe,\abh) {\smee{$(2{,}0{;}2{,}2)$}};
\node[anchor=west] at (\qwe,\ach) {\smee{$\langle 0{,}2{,}1{,}0{,}1{,}0 \rangle$}};
\commonframe;
\end{tikzpicture}
}
\newcommand{\hreetwentyeight}{
\begin{tikzpicture}[scale=\mysk, line width=2.0pt ]
\draw[rounded corners=2pt,name path=A] (0.025,-0.025) -- ++(2,0) -- ++(0,1) -- ++(1,0) -- ++(0,1);
\draw[rounded corners=2pt,name path=B] (-0.025,0.025) -- ++(0,2) -- ++(1,-1) -- ++(0,2) -- ++(1,-1) -- ++(0,1) -- ++(1,-1);
\generalfill;
\draw[rounded corners=1pt,red,opacity=\redop ] (2,2-0.05) -- ++(0,-1) -- ++(-1,1) -- ++(0,-2) -- ++(-1,1) -- ++(0,-1);
\node[anchor=west] at (\qwe,\abh) {\smee{$(2{,}0{;}2{,}1)$}};
\node[anchor=west] at (\qwe,\ach) {\smee{$\langle 0{,}1{,}1{,}1{,}1{,}0 \rangle$}};
\commonframe;
\end{tikzpicture}
}
\newcommand{\hreetwentynine}{
\begin{tikzpicture}[scale=\mysk, line width=2.0pt ]
\draw[rounded corners=2pt,name path=A] (0.025,-0.025) -- ++(3,0) -- ++(0,2);
\draw[rounded corners=2pt,name path=B] (-0.025,0.025) -- ++(0,3) -- ++(2,-2) -- ++(0,2) -- ++(1,-1);
\generalfill;
\draw[rounded corners=1pt,red,opacity=\redop ] (2,2-0.05) -- ++(0,-2) -- ++(-2,2) -- ++(0,-2);
\node[anchor=west] at (\qwe,\abh) {\smee{$(1{,}1{;}2{,}2)$}};
\node[anchor=west] at (\qwe,\ach) {\smee{$\langle 0{,}2{,}2{,}0 \rangle$}};
\commonframe;
\end{tikzpicture}
}
\newcommand{\hreethirty}{
\begin{tikzpicture}[scale=\mysk, line width=2.0pt ]
\draw[rounded corners=2pt,name path=A] (0.025,-0.025) -- ++(3,0) -- ++(0,2);
\draw[rounded corners=2pt,name path=B] (-0.025,0.025) -- ++(0,2) -- ++(1,-1) -- ++(0,1) -- ++(1,-1) -- ++(0,2) -- ++(1,-1);
\generalfill;
\draw[rounded corners=1pt,red,opacity=\redop ] (2,2-0.05) -- ++(0,-2) -- ++(-1,1) -- ++(0,-1) -- ++(-1,1) -- ++(0,-1);
\node[anchor=west] at (\qwe,\abh) {\smee{$(1{,}0{;}2{,}2)$}};
\node[anchor=west] at (\qwe,\ach) {\smee{$\langle 0{,}2{,}1{,}0{,}1{,}0 \rangle$}};
\commonframe;
\end{tikzpicture}
}
\def\Split{S}
\def\Rec{\mathsf{Rec}}
\def\DecRec{\mathsf{SortedRec}}
\title{The sandpile model on the complete split graph: $q,t$-Schr\"oder polynomials, sawtooth polyominoes, and a cycle lemma}
\author{Henri Derycke, Mark Dukes, and Yvan Le Borgne}
\date{}
\address{IGM, Universit\'e Gustave Eiffel, 77454 Marne-la-Vallée cedex 2, France.}
\email{henri.derycke@univ-eiffel.fr}
\address{School of Mathematics and Statistics, University College Dublin, Dublin 4, Ireland.}
\email{mark.dukes@ucd.ie}
\address{LaBRI, Universit\'e Bordeaux 1, 351 cours de la Lib\'eration, 33405 Talence cedex, France.}
\email{borgne@labri.fr}
\thanks{YLB was partially supported by ANR Project COMBIN\'E  ANR-19-CE48-0011}
\begin{document}

\begingroup
\def\uppercasenonmath#1{} 
\let\MakeUppercase\relax 
\maketitle
\endgroup

\begin{abstract}
This paper studies sorted recurrent configurations of the Abelian sandpile model on the complete split graph. We introduce two natural toppling processes, CTI and ITC toppling, on the recurrent configurations and use these to define two toppling delay statistics, wtopple$_{CTI}$ and wtopple$_{ITC}$. These new toppling delay statistics are time-weighted sums for the number of vertices that topple during each iteration of the toppling processes. We then introduce the bivariate {\em $q,t$}-CTI and {\em $q,t$}-ITC polynomials that are the generating functions of the bistatistics (level,wtopple$_{ITC}$) and (level,wtopple$_{CTI}$), where level is the well-established sandpile level statistic.

We prove the bistatistic (level,wtopple$_{ITC}$) maps to a bistatistic (area,bounce) on Schr\"oder paths that was introduced by Egge, Haglund, Killpatrick and Kremer (2003). This establishes equality of the $q,t$-ITC polynomial and the $q,t$-Schr\"oder polynomial of those same authors. This connection allows us to relate the $q,t$-ITC polynomial to the theory of symmetric functions and also establishes symmetry of the $q,t$-ITC polynomials. We conjecture equality of the $q,t$-CTI and $q,t$-ITC polynomials.

We also present and prove a characterization of sorted recurrent configurations as a new class of polyominoes that we call {\em sawtooth polyominoes}. The CTI and ITC toppling processes on sorted recurrent configurations are proven to correspond to bounce paths within the polyominoes. The main difference between the two bounce paths is the initial direction in which they travel. In addition to this, and building on the results of Aval, D'Adderio, Dukes, and Le Borgne (2016), we present a cycle lemma for a slight extension of stable configurations that allows for an enumeration of sorted recurrent configurations within the framework of the sandpile model.
\end{abstract}

\tableofcontents

\section{Introduction}\label{intro}
Cori and Rossin's study~\cite{cori2000} of the Abelian sandpile model (ASM) on the complete graph showed how recurrent 
configurations of the ASM on that graph are in one-to-one correspondence with parking functions.
Subsequent work by Cori and Poulalhon~\cite{cori2002} showed a similarly rich combinatorial characterization 
of recurrent configurations of the ASM on the complete multipartite graph $K_{1,p_1,\ldots,p_t}$ in terms of generalized parking functions termed
{\em $(p_1,\ldots ,p_t)$ parking functions}. 
For a general graph $G$, there are several known bijections between recurrent configurations of the ASM on $G$ and spanning trees of the graph~\cite{bij1,bij2,bij3,Dhar}. 
More generally, the notion of a $G$-parking function captures the concept of recurrent configurations and 
features as the basis of algebras associated to the spanning trees of a general graph, see e.g. Postnikov and Shapiro~\cite{ps}. 

In a different direction, Dukes and Le Borgne~\cite{dlb} proved that sorted recurrent configurations of the ASM on the 
complete bipartite graph $K_{m,n}$ were in one-to-one correspondence with parallelogram polyominoes having an $m\times n$ bounding box.
That research was extended by Aval, D'Adderio, Dukes, Hicks, and Le Borgne in \cite{aadhl} and \cite{aadl} wherein a more thorough 
consideration of statistics on these polyominoes revealed a connection to the theory of symmetric functions. 
This connection was used to prove several symmetry conjectures first stated in \cite{dlb}.

The complete split graph is a graph consisting of two distinct parts:
a {\it clique part} in which all distinct pairs of vertices are connected by a single edge and
an {\it independent part} in which no two vertices are connected to an edge.
In addition, there is precisely one edge between every pair of vertices that lie in different parts.  
In this paper we denote by $\Split_{n,d}$ the complete split graph that consists of 
the solitary sink part $\{s\}$, vertices $V=\{v_1,\ldots,v_{n}\}$ in the clique part, and vertices $W=\{w_1,\ldots,w_{d}\}$ in the independent part.\footnote{Our decision to discount mention of the sink in the parameters of $\Split_{n,d}$ improves the presentation.}
The graph $S_{5,3}$ is illustrated in Figure~\ref{figsplit}. 

\begin{figure}[!h]
\label{figsplit}
	\begin{center}
	\unlabelledsplitgraph
	\end{center}
\caption{The complete split graph $S_{5,3}$. The sink is in the dashed rectangle, the 5 vertices below it form the clique part, and the $3$ vertically aligned vertices to its right are the independent part.}
\end{figure}

Dukes~\cite{ncf} recently characterized the recurrent configurations of the ASM on $S_{n,d}$.
He presented a bijection from sorted recurrent configurations on $\Split_{n,d}$ to 
$\Schroder_{n,d}$, the set of Schr\"oder words consisting of $n$ up steps, $n$ down steps, 
and $d$ horizontal steps.\footnote{In Dukes~\cite{ncf} the correspondence was presented with M\"otzkin paths. 
We have chosen to use the equivalent term Schr\"oder path here due to its relation to papers concerning the $q,t$-Schr\"oder polynomial. 
Moreover notice the change of notations for the split graph parameters: $n\leftarrow m-1$ and $d\leftarrow n$.}
We delve further into the research presented in Dukes~\cite{ncf} with a view to extending 
that work (in the same manner as was done in \cite{aadhl,aadl} for the complete bipartite graph) to the complete split graph.

In this paper we consider the ASM on the complete split graph in which a clique vertex is the sink.
We will only be interested in sorted, or {\it weakly-decreasing}, recurrent configurations on $\Split_{n,d}$.
Beyond the enumerative connection with Schr\"oder paths, this restriction to sorted configurations is also motivated by the natural action of the symmetric group on recurrent configurations, and this features in Section~\ref{sectionsix} and also in the recent work of D'Adderio et al.~\cite{ddillw}.
In Section~\ref{sectiontwo} we introduce the ASM and recall some terminology. 
We define two toppling processes, CTI and ITC, on sorted recurrent configurations that will be used throughout the paper.
We also introduce two bivariate polynomials that each encode two statistics on these sorted recurrent configurations and call these the $q,t$-CTI and $q,t$-ITC polynomials.

In Section~\ref{sectionthree} we modify the bijection from sorted recurrent configurations to Schr\"oder paths given in Dukes~\cite{ncf}, 
and prove it translates the level statistic on configurations to the area statistic on Schr\"oder paths while also translating a delay statistic to a bounce statistic on the paths.
This allows us to prove that the $q,t$-ITC polynomial introduced in Section~\ref{sectiontwo} is equal to the $q,t$-Schr\"oder polynomial of Egge, Haglund, Killpatrick and Kremer~\cite{ehkk}, and in so doing relate it to the theory of symmetric functions. 
Two symmetry properties of the $q,t$-ITC polynomials follow as a result of this connection.
We also present a formula for the $q,t$-ITC polynomials that is different to the one given by Egge et al.

In Section~\ref{sectionfive} we define a new class of polyominoes that we term {\em sawtooth polyominoes}.
We prove they are in bijection with Schr\"oder words. 
We also show how two bounce paths within these new polyominoes illustrate the CTI and ITC toppling processes of the corresponding recurrent configuration. 
Moreover, we show how to directly construct the sawtooth polyomino that corresponds to a recurrent configuration. 
The statistics that form the $q,t$-CTI and $q,t$-ITC polynomials are also expressible as statistics on the corresponding sawtooth polyominoes. 

In Section~\ref{sectionsix} we introduce a framework for the ASM on the complete split graph and 
within that framework derive a cycle lemma for ASM configurations. 
The notion of a cycle lemma in this context comes from the work of Dvoretzky and Motzkin~\cite{dvor} in which they 
(re-)consider a vote-counting problem and show how all the possible outcomes can be partitioned into groups. 
There is precisely one `favourable outcome' in each group.
A cycle permutation acts on the elements of each group and maps to other elements within that group, and from which it follows that each group has the same size. 
Combining these facts shows that the number of favourable voting outcomes
they wish to count equals the total number of outcomes divided by the size of each group.
The content of this section is completely separate to the material presented in Sections~\ref{sectiontwo}--\ref{sectionfive}.
This builds on work from Aval et al.~\cite{aadl} in which a cycle lemma for configurations on the complete bipartite graph was proven.
That this framework enables this to be done without having to utilize some planar representations of the configurations as intermediate objects suggests a more general result may hold true.
An outcome of this is that it provides another enumeration of sorted recurrent configurations. 
Finally, in Section~\ref{section:conclusion} equality of the $q,t$-CTI and $q,t$-ITC polynomials is conjectured.

This paper also adds to the growing body of recent research that has found unexpected connections between 
recurrent configurations of the ASM on parameterized graph classes and other combinatorial objects~\cite{aadhl,aadl,imrn,mylb,ddillw,dsss,ferrers}.

\section{The sandpile model on $S_{n,d}$ and two toppling processes}\label{sectiontwo}

In this section we first recall some sandpile terminology and concepts, and then
introduce two toppling conventions that will be important throughout the paper.
The ASM may be defined on any undirected graph $G$ with a designated vertex $s$ called the \emph{sink}.  
A \emph{configuration} on $G$ is an assignment of non-negative integers to the non-sink vertices of $G$:
\begin{align*}
c:V(G)\backslash\{s\} ~\mapsto ~ \mathbb{N}=\{0,1,2,\ldots\}.
\end{align*}
The number $c(v)$ is referred to as \emph{the number of grains} at vertex~$v$ or the \emph{height} of $v$.  
Given a configuration $c$, a vertex $v$ is said to be {\emph{stable}} if the number of grains at $v$ is strictly smaller than the threshold of that vertex, which is the degree of $v$, denoted $\thresh(v)$.
Otherwise $v$ is {\emph{unstable}}.  A configuration is {\emph{stable}} if all non-sink vertices are stable.

If a vertex is unstable then it may {\emph{topple}}, which means the vertex donates one grain to each of its neighbors.  
The sink vertex has no height associated with it and only absorbs grains, thereby modelling grains exiting the system.
Starting from any configuration $c$ and successively toppling unstable vertices one will eventually reach a stable configuration $c'$.  
This final configuration $c'$ does not depend on the order in which unstable vertices were toppled. We call $c'$ the \emph{stabilization} of $c$.

Starting from the empty configuration, one may indefinitely add any number of grains to any vertices in $G$ and topple vertices should they become unstable.  
Certain stable configurations will appear again and again, that is, they \emph{recur}, while other stable configurations will never appear again.
These {\emph{recurrent configurations}} are the ones that appear in the long term limit of the system. 
Let $\Rec(G)$ be the set of recurrent configurations on $G$.

Determining the set $\Rec(G)$ for a given graph $G$ is not a straightforward task. 
In \cite[Section 6]{Dhar}, Dhar describes the so-called \emph{burning algorithm}, which establishes in linear time whether a given stable configuration is recurrent:

\begin{prop}[\cite{Dhar}, Section 6.1]\label{pro:DharBurning}
Let $G$ be a graph with sink $s$, and let $c$ be a stable configuration on $G$. Then $c$ is recurrent if and only if there exists an ordering $v_0=s,v_1,\ldots,v_{n}$ of the vertices of $G$ such that, starting from $c$, for any $i \geq 1$, toppling the vertices $v_0,\ldots,v_{i-1}$ causes the vertex $v_i$ to become unstable. Moreover, if such a sequence exists, then toppling $v_0,\ldots,v_{n}$ returns the initial configuration $c$.
\end{prop}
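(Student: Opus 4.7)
The approach is to prove both directions of the equivalence using the abelian property of the sandpile together with the interpretation that ``toppling the sink'' $v_0 = s$ amounts to adding one grain to each neighbor of $s$, since the sink carries no height.

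For the $(\Leftarrow)$ direction, suppose the ordering $v_0 = s, v_1, \ldots, v_n$ exists. Each toppling in the sequence is legal by hypothesis, so the only thing to verify is that the net effect is the identity. After all of $v_0, \ldots, v_n$ have been processed, every non-sink vertex $v$ has donated $\thresh(v)$ grains (one per neighbor) and received exactly one grain from each of its $\thresh(v)$ neighbors (including the sink, via the initial sink-firing step), so the net change at $v$ is zero and the configuration returns to $c$. Thus $c$ is reached from itself by a nonempty sequence of grain additions and legal topplings, witnessing recurrence. The ``moreover'' clause is exactly this net-change computation.

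For the $(\Rightarrow)$ direction, I would take as a lemma the standard fact that for any recurrent configuration $c$, firing the sink once (adding one grain to each sink-neighbor) and then stabilizing returns $c$, with each non-sink vertex firing exactly once. The toppling-count part follows from the identity $\Delta \vec{f} = \Delta \mathbf{1}$, where $\Delta$ denotes the reduced Laplacian of $G$ and $\mathbf{1}$ is the all-ones vector indexed by non-sink vertices, together with invertibility of $\Delta$ (for $G$ connected). The return-to-$c$ part comes from the fact that recurrence of $c$ provides some nonempty cycle returning $c$ to itself, and by the abelian property this cycle can be taken to be sink firing followed by stabilization. Given this lemma, reading off the non-sink topplings in the order they occur in any legal realization of the stabilization yields $v_1, \ldots, v_n$; prepending $v_0 = s$ gives the required ordering.

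The main obstacle lies in the $(\Rightarrow)$ direction, specifically justifying the intermediate lemma that sink-firing stabilization of a recurrent configuration has firing vector exactly $\mathbf{1}$ and returns to $c$. Both facts are standard but require a careful setup of the sandpile group and of the linear-algebraic properties of the reduced Laplacian; once in hand, the remainder of the argument is bookkeeping using the abelian property to guarantee that any greedy legal scheduling of the $n$ topplings realizes the ordering.
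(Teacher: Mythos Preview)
The paper does not prove this proposition: it is quoted from Dhar~\cite{Dhar} and used as a black box, so there is no argument in the paper to compare yours against.

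Your outline is the standard one and is essentially correct, with one caveat. In the $(\Leftarrow)$ direction, the net-change computation is fine and does establish the ``moreover'' clause. But the sentence ``$c$ is reached from itself by a nonempty sequence of grain additions and legal topplings, witnessing recurrence'' is doing more work than it lets on. The paper defines recurrent configurations as those appearing in the long-term limit of the grain-addition Markov chain; the fact that self-return under sink-firing implies recurrence in \emph{that} sense is not immediate. One route is to observe that the burning order rules out any forbidden subconfiguration (a nonempty $F\subseteq V\setminus\{s\}$ with $c(v)<\deg_F(v)$ for all $v\in F$), and then invoke the equivalence ``no forbidden subconfiguration $\Leftrightarrow$ recurrent''; another is to use the uniqueness of the recurrent representative in each class of $\mathbb{Z}^{V\setminus\{s\}}/\operatorname{im}\Delta$. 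Either way, this step needs exactly the same sandpile-group or Laplacian machinery that you (rightly) flag as the obstacle in the $(\Rightarrow)$ direction, so the gap you identify is really symmetric between the two implications.

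For $(\Rightarrow)$, your identification of the key lemma and of the identity $\Delta\mathbf{1}=\sigma$ (the sink-firing vector) is exactly right, and once the return-to-$c$ statement is in hand the extraction of the ordering is, as you say, bookkeeping via the abelian property.
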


A variant of the sequential toppling outlined in the previous proposition is that of {\em{parallel chip-firing}} or {\em parallel toppling}.
At each time step the set of unstable vertices is recorded. 
Then, in parallel, all of these unstable vertices are toppled to give the next (possibly unstable) configuration.
We will consider two slight variants of the classical parallel chip-firing process that we call CTI toppling and ITC toppling. 
We first explain CTI toppling.
This variant is always only applied to a recurrent configuration $c \in \Rec(S_{n,d})$ whose sink has just been toppled.
Let $\ClassOneParams{n}{d}$ be the set of sorted recurrent configurations on $\Split_{n,d}$, 
i.e. those recurrent configurations
$(a_1,\ldots,a_{n};b_1,\ldots,b_d)$  for which 
\begin{equation*}
a_1\geq a_2\geq  \ldots \geq a_{n} \mbox{ and }b_1\geq b_2\geq \ldots \geq b_d.
\end{equation*}
These ordered recurrent configurations correspond to orbits of the recurrent configurations acted on by the automorphism group of the graph.

\subsection{CTI toppling}
Given an unstable configuration, we first check if there are any unstable clique vertices. 
If there are then topple these in parallel.
Next check if there are any unstable independent vertices. 
If there are then topple these in parallel.
We repeat these two steps successively until there are no remaining unstable vertices. 
We will refer to this toppling order as {\it{CTI toppling}}, an abbreviation for \underline{C}lique \underline{T}hen \underline{I}ndependent.
We will write $\ToppleRecord_{CTI}(c) = (P_1,Q_1,\ldots,P_t,Q_t)$ to indicate the vertices that were toppled in parallel at each step. 
Here $P_1$ represents the (non-sink) clique vertices that were initially toppled in parallel. 
$Q_1$ is the collection of independent vertices that were next toppled, and so on. 
Note that each of $P_i$ and $Q_i$ can be empty, but they cannot both be empty as at least one toppling 
must happen during each iteration of toppling clique-then-independent vertices, i.e. one must have 
\begin{equation*}
P_i\cup Q_i \neq \emptyset.
\end{equation*}
We will find it convenient to replace the comma that separates the clique and independent parts in a configuration with a semi-colon.

\begin{example}
Consider the sorted recurrent configuration $c=(7,6,5,2,1;5,4,4)$ on $S_{5,3}$.
Topple the sink to get $(8,7,6,3,2;6,5,5)$. 
The set of unstable clique vertices is now $P_1=\{v_1\}$. 
Topple all in $P_1$ to get the configuration $(0,8,7,4,3;7,6,6)$. 
The set of unstable independent vertices is now $Q_1=\{w_1,w_2,w_3\}$. 
Topple all in $Q_1$ to get the configuration $(3,11,10,7,6;1,0,0)$.
The set of unstable clique vertices is $P_2=\{v_2,v_3\}$.
Topple all in $P_2$ to get the configuration $(5,4,3,9,8;3,2,2)$. 
The set of unstable independent vertices is $Q_2=\emptyset$ so the configuration remains unchanged.
The set of unstable clique vertices is $P_3=\{v_4,v_5\}$.
Topple all in $P_3$ to get the original configuration $(7,6,5,2,1;5,4,4)$. 
As there are no further topplings to be done, we will set $Q_3=\emptyset$ so that $P_3$ has a pair.
To conclude, $\ToppleRecord_{CTI}(c)=(\{v_1\},\{w_1,w_2,w_3\},\{v_2,v_3\},\emptyset,\{v_4,v_5\},\emptyset)$.
\end{example}

Given $c \in \ClassOneParams{n}{d}$,
we define the {\em height of $c$}, $\height(c)$, to be the sum of the configuration entries.
We also define the {\em level of $c$} as 
\begin{align*}
\level(c)=\height(c)-\left( \binom{n+d}{2}-\binom{d}{2}\right),
\end{align*} the height less the number of non-sink incident edges.
Suppose $\ToppleRecord_{CTI}(c) = (P_1,Q_1,\ldots,P_t,Q_t)$ and set $p_i:=|P_i|$ and $q_i:=|Q_i|$.
Define 
$\bouncepath_{CTI}(c) := (p_1,q_1,\ldots,p_t,q_t),$
and 
\begin{equation*}
\topplebounce_{CTI}(c) := 1(p_1+q_1)+2(p_2+q_2)+\ldots+t(p_t+q_t).
\end{equation*}

\begin{example}
\label{examplelala}
Consider 
the ASM on $S_{2,2}$ with sink $v_3$.
In the table below for every configuration $c \in \ClassOneParams{2}{2}$ we give the height $\height(c)$, 
the sequence $\bouncepath_{CTI}(c)$, and the quantity $\topplebounce_{CTI}(c)$.
\begin{equation*}
\scriptsize
\begin{array}{cclc|cclc} \hline
c & \height(c) & \bouncepath_{CTI}(c) & \topplebounce_{CTI}(c) &
c  & \height(c) & \bouncepath_{CTI}(c) & \topplebounce_{CTI}(c) \\ \hline
(3, 3; 2, 2) & 10 &  (2, 2) & 4  	& 
  (3, 1; 1, 0) & 5 &  (1, 1, 1, 1)&  6 	\\
(3, 3; 2, 1) & 9 &  (2, 2)&  4 		&
  (3, 1; 1, 1) & 6 &  (1, 2, 1, 0) & 5	\\
(3, 3; 2, 0) & 8 &  (2, 2)&  4 		&
  (3, 0; 2, 2) & 7 &  (1, 2, 1, 0) & 5	\\
(3, 3; 1, 1) & 8 &  (2, 2) & 4		&
  (3, 0; 2, 1) & 6 &  (1, 2, 1, 0)&  5	\\
(3, 3; 1, 0) & 7 &  (2, 2)&  4		&
  (3, 0; 1, 1) & 5 &  (1, 2, 1, 0) & 5	\\
(3, 3; 0, 0) & 6 &  (2, 2) & 4 		&
  (2, 2; 2, 2) & 8 &  (0, 2, 2, 0) & 6 	\\
(3, 2; 2, 2) & 9 &  (1, 2, 1, 0)&  5 		&
  (2, 2; 2, 1) & 7 &  (0, 1, 2, 1) & 7 	\\
(3, 2; 2, 1) & 8 &  (1, 2, 1, 0) & 5		&
  (2, 2; 2, 0) & 6 &  (0, 1, 2, 1) & 7	\\
(3, 2; 2, 0) & 7 &  (1, 1, 1, 1) & 6 		&
  (2, 1; 2, 2) & 7 &  (0, 2, 2, 0) & 6 	\\
(3, 2; 1, 1) & 7 &  (1, 2, 1, 0) & 5 		&
  (2, 1; 2, 1) & 6 &  (0, 1, 1, 1, 1, 0) & 8	\\
(3, 2; 1, 0) & 6 &  (1, 1, 1, 1) & 6		&
  (2, 1; 2, 0) & 5 &  (0, 1, 1, 0, 1, 1) & 9	\\
(3, 2; 0, 0) & 5 &  (1, 0, 1, 2) & 7 		&
  (2, 0; 2, 2) & 6 &  (0, 2, 1, 0, 1, 0)&  7 	\\
(3, 1; 2, 2) & 8 &  (1, 2, 1, 0) & 5		&
  (2, 0; 2, 1) & 5 &  (0, 1, 1, 1, 1, 0) & 8	\\
(3, 1; 2, 1) & 7 &  (1, 2, 1, 0) & 5		&
  (1, 1; 2, 2) & 6 &  (0, 2, 2, 0)&  6	\\
(3, 1; 2, 0) & 6 &  (1, 1, 1, 1)&  6		&
  (1, 0; 2, 2) & 5 &  (0, 2, 1, 0, 1, 0) & 7 \\ \hline
\end{array}
\end{equation*}
\end{example}

Given the set-up above, let us define the following bivariate polynomial that we call 
the {\em $q,t$-CTI polynomial}:

\newcommand{\notrepoly}{\mathfrak{F}}
\begin{equation*}
\notrepoly_{n,d}^{CTI}(q,t) := \sum_{c \in \ClassOneParams{n}{d}} 
	q^{\level(c)} t^{\topplebounce_{CTI}(c)-(n+d)}.
\end{equation*}

\begin{example}
The polynomial $\notrepoly_{2,2}^{CTI} (q,t)$ is readily calculated from the table in Example~\ref{examplelala}.
\begin{align*}
\notrepoly_{2,2}^{CTI} (q,t) =~~ &
q^5 + t^5 + q^4 t + q t^4 + q^3 t^2 + q^2 t^3 + q^4 
+ t^4 + 2q^3 t + 2q t^3 + 2q^2 t^2 + 2q^3 + 2t^3 \\ 
& + 3q^2 t + 3qt^2 + q^2 + t^2 + 2qt + q + t. 
\end{align*}
\end{example} 

\subsection{ITC toppling}
Now that CTI toppling and the statistics associated with it have been defined, it is straightforward to define ITC toppling and its associated statistics. 
Almost everything is the same as in the CTI case, except what happens immediately after toppling the sink. 
ITC toppling stands for {\underline{I}}ndependent \underline{T}hen \underline{C}lique toppling:
Given the unstable configuration that results from toppling the sink, first identify those independent vertices that are unstable. 
Topple those unstable independent vertices in parallel and identify those clique vertices that are unstable. 
Topple those unstable clique vertices, and so on.

Given $c \in \ClassOneParams{n}{d}$,
$\height(c)$ is the sum of the configuration entries, as before.
Suppose $\ToppleRecord_{ITC}(c) = (Q'_1,P'_1,\ldots,Q'_t,P'_t)$ and set $p'_i=|P'_i|$ and $q'_i=|Q'_i|$.
Define $\bouncepath_{ITC}(c) := (q'_1,p'_1,\ldots,q'_t,p'_t),$
and 
\begin{equation*}
\topplebounce_{ITC}(c) := 1(q'_1+p'_1)+2(q'_2+p'_2)+\ldots+t(q'_t+p'_t).
\end{equation*}
Finally, we define the accompanying polynomial to these two statistics:
\begin{equation*}
\notrepoly_{n,d}^{ITC}(q,t) := \sum_{c \in \ClassOneParams{n}{d}} q^{\height(c)- \left(\binom{n+d}{2}-\binom{d}{2}\right)} t^{\topplebounce_{ITC}(c)-(n+d)},
\end{equation*}
and call it the {\em $q,t$-ITC polynomial}.

\begin{example}\label{itctopplingexample}
Consider the recurrent configuration $c=(7,6,5,2,1;5,4,4)\in \ClassOneParams{5}{3}$. 
Topple the sink to get $(8,7,6,3,2;6,5,5)$. The set of currently unstable independent vertices is $Q_1'=\{w_1\}$. 
Topple all in $Q_1'$ to get $(9,8,7,4,3;0,5,5)$. The set of currently unstable clique vertices is $P_1' = \{v_1,v_2\}$. Topple all in 
$P_1'$ to get the configuration $(2,1,9,6,5;2,7,7)$. The set of currently unstable independent vertices is $Q_2'=\{w_2,w_3\}$. 
Topple all in $Q_2'$ to get the configuration $(4,3,11,8,7;2,1,1)$. 
From this we have $P_2'=\{v_3,v_4\}$ and after toppling these we have the configuration $(6,5,4,1,9;4,3,3)$. 
The set of currently unstable independent vertices is empty, so $Q_3'=\emptyset$ and the configuration remains unchanged. 
The set of currently unstable clique vertices is $P_3'=\{v_5\}$, and toppling all in $P_3'$ results in the original configuration $c=(7,6,5,2,1;5,4,4)$.
Thus 
\begin{align*}
\ToppleRecord_{ITC}(c) =& \left( \{w_1\}, ~ \{v_1,v_2\}, ~ \{w_2,w_3\}, ~ \{v_3,v_4\}, ~ \emptyset, ~ \{v_5\} \right)\\
\bouncepath_{ITC}(c) =& (1,2,2,2,0,1),
\intertext{and so}
\topplebounce_{ITC}(c) =& 1(1+2)+2(2+2)+3(0+1) = 14.
\end{align*}
\end{example}

We will have more to say about the $\notrepoly_{n,d}^{ITC}(q,t)$ polynomials in the next section and their relationship to other polynomials that appear in the literature.
One final piece of notation that is related to ITC toppling and will be of use going forward is the following. 

\begin{definition}\label{twosix}
Suppose $c \in \ClassOneParams{n}{d}$ with $\bouncepath_{ITC}(c) := (q'_1,p'_1,\ldots,q'_t,p'_t).$
We will call the pair 
\begin{equation*}
\left[(q'_1,\ldots,q'_t),~ (p'_1,\ldots,p'_t)\right]
\end{equation*} an {\em ITC-toppling sequence of length $t$}. (Convention: $q'_0:=0$ and $p'_0:=1$.)
Let $\mathsf{ITC}_{n,d,t}$ be the set of all possible ITC-toppling sequences of length $t$ on $\Split_{n,d}$, and 
set 
\begin{equation*}
\mathsf{ITC}_{n,d} := \displaystyle\bigcup_{k\geq 1} \mathsf{ITC}_{n,d,k}.
\end{equation*}
\end{definition}

The set of all possible ITC-toppling sequences is characterised in Theorem~\ref{lem:description-ITCnm}.
Recall that a {\em composition} of an integer $n$ is a sequence of positive integers $a=(a_1,\ldots,a_k)$ 
whose sum is $n$.  We will denote this $a \iscomp n$.
We will use the notation $a\iscomp_k n$ to indicate $a$ has length $k$.
A {\em weak composition} of $n$ is a sequence of non-negative integers whose sum is $n$.
We will denote this $a \isweakcomp n$ and use the notation $a\isweakcomp_k n$  to indicate a 
weak-composition $a$ has length $k$.

\begin{theorem}\label{lem:description-ITCnm} For all $n\geq 1$, $d\geq 0$, 
\begin{equation*}
\mathsf{ITC}_{n,d} = \{\left[(d),(n)\right]\} \cup
	\bigcup_{k\geq 2} 
	\left\{ 
	\left[(b_1,\ldots,b_k), (a_1,\ldots,a_{k})\right]
		~:~ 
	\begin{array}{l}
		(b_1,\ldots,b_k) \isweakcomp d,\\
		(a_1,\ldots,a_{k}) \isweakcomp n,\\
		a_1,\ldots,a_{k-1}>0, \mbox{ and }\\
		b_k+a_k>0
	\end{array}
	\right\}.
\end{equation*}
\end{theorem}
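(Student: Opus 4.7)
The plan is to establish the two inclusions separately.

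For the forward inclusion $\mathsf{ITC}_{n,d} \subseteq \mathrm{RHS}$, let $c \in \ClassOneParams{n}{d}$ have ITC-toppling sequence $[(b_1,\ldots,b_k),(a_1,\ldots,a_k)]$. A standard consequence of Dhar's burning algorithm (Proposition~\ref{pro:DharBurning}) and the Abelian property is that parallel stabilisation initiated by toppling the sink returns $c$ to itself, with every non-sink vertex toppling exactly once. Hence $(b_i)\isweakcomp d$ and $(a_i)\isweakcomp n$. If $k=1$ this forces $[(d),(n)]$. For $k\geq 2$, the last pair satisfies $b_k+a_k>0$ because only nontrivial steps are recorded. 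To verify $a_i>0$ for $i<k$, suppose instead $P_i'=\emptyset$ for some $i<k$. Independent vertices share no edges, so toppling $Q_i'$ does not change the height of any independent vertex outside $Q_i'$; together with $P_i'=\emptyset$, no grain movement occurs between the end of step $i$ and the check of $Q_{i+1}'$, so $Q_{i+1}'=\emptyset$, and similarly $P_{i+1}'=\emptyset$. Thus ITC terminates at step $i$, contradicting $i<k$.

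For the reverse inclusion I construct an explicit $c\in\ClassOneParams{n}{d}$ realising any valid sequence. Assign $v_i$ to the unique group $\ell$ with $a_1+\cdots+a_{\ell-1}<i\leq a_1+\cdots+a_\ell$, and assign $w_j$ analogously using the $b_l$'s. Set the heights to the minimum values consistent with each vertex first becoming unstable at its designated step:
\begin{align*}
h(v_i) &= (n+d)-1-\sum_{l\leq \ell}b_l-\sum_{l<\ell}a_l,\\
h(w_j) &= \max\!\Bigl(0,\; n-\sum_{l<\ell}a_l\Bigr),
\end{align*}
where $\ell$ denotes the group of the vertex in question. One checks: these values are non-negative (using $a_{k-1}>0$ to guarantee a non-empty admissible range for the last group) and strictly below the thresholds $n+d$ and $n+1$ respectively (giving stability); they are weakly decreasing in the vertex index, so $c$ is sorted; running ITC on $c$ reproduces the prescribed sequence, since each vertex carries precisely the minimum height making it unstable at its designated step (the gap to the next threshold is exactly $b_\ell+a_{\ell-1}>0$, which is what prevents premature toppling); and since the full sequence of parallel topplings returns $c$ to itself, Dhar's burning criterion certifies that $c$ is recurrent.

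The main obstacle is the reverse direction: carefully tracking cumulative grains received by each vertex and simultaneously verifying stability, sortedness, and recurrence of the constructed configuration. Fortunately, the sorted hypothesis aligns naturally with the monotone structure of the construction, because later groups receive strictly more grains before they topple and so require smaller starting heights. The trickiest boundary cases concern group $1$, where stability forces $h(w_j)=n$ (rather than the general formula, whose lower bound exceeds the stability ceiling), and group $k$, where $a_k$ or $b_k$ may vanish (making one height formula vacuous) and the hypothesis $b_k+a_k>0$ combined with $a_{k-1}>0$ ensures the remaining group is non-empty and its admissible height range lies inside $[0,\text{threshold})$.
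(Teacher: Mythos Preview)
Your proof is correct and follows essentially the same approach as the paper's: both directions use Dhar's criterion, the forward inclusion hinges on the observation that independent vertices are pairwise non-adjacent (so $P_i'=\emptyset$ forces termination), and the reverse inclusion is established by constructing the same ``minimal'' configuration (your formulas agree with the paper's once one unwinds the convention $a_0=1$, $b_0=0$; your $\max(0,\cdot)$ is never active since $\sum_{l<\ell}a_l\leq n$). The paper simply asserts the verification of sortedness, recurrence, and the toppling sequence as ``straightforward'', whereas you sketch these checks more explicitly.
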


\begin{proof}
Let $X_{n,d}$ be the set on the right-hand side of the stated equality.
From Definition~\ref{twosix} we have
\begin{align*}
\mathsf{ITC}_{n,d} = & \left\{ 
\left[ (b_1,\ldots,b_k), (a_1,\ldots,a_{k})\right] ~:~ 
\right. \\
& \qquad \left. 
(b_1,a_1,\ldots,b_k,a_k)= \bouncepath_{ITC}(c) \mbox{ for some }c \in \ClassOneParams{n}{d}
\right\}.
\end{align*}

Consider $$\ToppleRecord_{ITC}(c) = (B_1,A_1,\ldots,B_k,A_k)$$ for some $c \in \ClassOneParams{n}{d}$.
This toppling process repeatedly parallel-topples those independent vertices that are currently unstable, followed by a parallel-toppling of those clique vertices that are then unstable.
In this way it follows that $b_i+a_i>0$ for all $1\leq i\leq k$, where $a_i=|A_i|$ and $b_i=|B_i|$.
Suppose that, for some $1\leq i<k$, we were to parallel-topple all unstable independent vertices $B_i$ to then find there are no unstable clique vertices $A_i$.
It must be the case that one now has a stable configuration since the toppling of the independent vertices $B_i$ has no effect on the other independent vertices. 
For this reason $A_i$ must be non-empty for all non-final times, i.e. $a_i>0$ for all $1\leq i<k$.
Note that this does {\em not} imply that $A_k$ is necessarily empty.
Consequently, we have 
$b_i\geq 0$ for all $1\leq i \leq k$ and $a_i>0$ for all $1\leq i<k$. 
This shows the condition $a_i+b_i>0$ for all $1\leq i \leq k$ is already satisfied for all cases except $i=k$.
For the case $i=k$, since it is possible that $a_k=0$ we must include the condition $b_k+a_k>0$.

Dhar's burning algorithm tells us that for every recurrent configuration $c$, every non-sink vertex topples exactly once.
Thus $a_1+\ldots+a_k=n$ and $b_1+\ldots+b_k=d$ for some $k$. 
Therefore
\begin{align*}
\mathsf{ITC}_{n,d} \subseteq  & \left\{
    \left[(b_1,\ldots,b_k), (a_1,\ldots,a_{k})\right]
    ~:~ \begin{array}{l} (b_1,\ldots,b_k) \isweakcomp d, \\ (a_1,\ldots,a_{k-1}) \iscomp n-a_k\\ a_k+b_k>0 \mbox{ and }a_k\geq 0 \end{array}\right\} =  X_{n,d}.
\end{align*}
Next, consider $\left[(b_1,\ldots,b_k), (a_1,\ldots,a_{k})\right] \in X_{n,d}$. 
Let $c$ be the configuration on $\Split_{n,d}$ defined as follows:
\begin{align*}
c(v_i) :=& n+d - \left(\sum_{\ell =0}^{j-1} a_{\ell}+b_{\ell} \right) - b_j, 
\end{align*}
for all $n-(a_0+\ldots +a_j) \leq i < n-(a_0+\ldots +a_{j-1})$ and $1\leq j\leq k$, and
\begin{align*}
c(w_i) :=& n+1 - (a_0+a_1+\ldots+a_{j-1}),
\end{align*}
for all $d-(b_0+\ldots+b_j) \leq i < d-(b_0+\ldots + b_{j-1})$ and $1\leq j \leq k$.
It is straightforward to see that this configuration is indeed recurrent, weakly decreasing, and has $\bouncepath_{ITC}(c) = (b_1,a_1,\ldots,b_k,a_k)$.
This shows $X_{n,d} \subseteq \mathsf{ITC}_{n,d}$. 
Therefore $\mathsf{ITC}_{n,d} = X_{n,d}$ as claimed.
\end{proof}

\begin{example}\label{itc22}
The set $\mathsf{ITC}_{2,2} = \mathsf{ITC}_{2,2,1} \cup \mathsf{ITC}_{2,2,2} \cup \mathsf{ITC}_{2,2,3}$, where $\mathsf{ITC}_{2,2,1} = \left\{ \left[(2),~(2)\right] \right\}$,
\begin{align*}
\mathsf{ITC}_{2,2,2} =& \left\{ \begin{array}{l}
	\left[(0,2),~ (2,0)\right],\\
	\left[(1,1),~ (2,0)\right],\\
	\left[(2,0),~ (1,1)\right],\\
	\left[(1,1),~ (1,1)\right],\\
	\left[(0,2),~ (1,1)\right]
	\end{array}\right\}
\mbox{ and }
\mathsf{ITC}_{2,2,3} = \left\{\begin{array}{l}
\left[(1, 0, 1),~ (1, 1, 0)\right],\\
\left[(0, 1, 1),~ (1, 1, 0)\right],\\
\left[(0, 0, 2),~ (1, 1, 0)\right]
\end{array}
\right\}.
\end{align*}
\end{example}

The number of elements in the sets $\ITC_{n,d,k}$ and $\ITC_{n,d}$ are given in Lemma~\ref{itc:sequence:enumeration}.

\section{Schr\"oder paths and ITC-topplings}\label{sectionthree}
In this section we will use a modification of the bijection from Dukes~\cite{ncf} to show how an area and bounce bistatistic on Schr\"oder paths corresponds to 
a height and weighted-toppling bistatistic on sorted recurrent configurations when ITC topplings is employed.
Experimentally, both $\notrepoly_{n,d}^{CTI} (q,t)$ and $\notrepoly_{n,d}^{ITC} (q,t)$ appear to be equal and symmetric in $q$ and $t$. 
This was the case for the corresponding bivariate polynomials for both the complete and complete bipartite graphs, where connections were found between paths and symmetric functions. 
In this paper we establish such a connection for $\notrepoly_{n,d}^{ITC} (q,t)$. 

Let $\Schroder_{n,d}$ be the set of Schr\"oder words consisting of $n$ $U$'s, $n$ $D$'s, and $d$ $H$'s. 
The defining property of such words is that for every prefix, the number of $D$'s that appear is never more than the number of $U$'s that appear.
In Dukes~\cite{ncf} it was shown that configurations in $\ClassOneParams{n}{d}$ are in one-to-one correspondence with paths in $\Schroder_{n,d}$ via a bijection
\begin{equation*}
\stc: \Schroder_{n,d} \mapsto \ClassOneParams{n}{d}.
\end{equation*}
The bijection $\stc$ is defined as follows.
\begin{definition}
Let $p \in \Schroder_{n,d}$ be a Schr\"oder word. Then 
$$\stc(p):=(a_1,\ldots,a_{n};b_1,\ldots,b_d)$$ where
$b_i$ is the number of $D$'s following the $i$th $H$ of $p$, and
$a_j+1$ is the number of non-$U$'s following the $j$th $U$ of $p$.
\end{definition}
\begin{example}\label{example:three:two}
Let $p=UHUDUHHDUDUDD \in \Schroder_{5,3}$. Then
\begin{equation*}
\stc(p)=(7,6,5,2,1;5,4,4)\in \ClassOneParams{5}{3}.
\end{equation*}
\end{example}
Schr\"oder paths are pictorial representations of Schr\"oder words where we identify $U$, $H$, and $D$ with the steps $(0,1)$, $(1,1)$, and $(1,0)$, respectively.
The blue line in Figure~\ref{fig:three:three} illustrates the Schr\"oder path for $p$.

Given a Schr\"oder word/path $p$, we now define its area and bounce as in Haglund~\cite[Sec.1]{haglund-qtschroder}.
(Figures~\ref{fig:three:three} and \ref{fig:three:four} contain illustrations of these definitions.)
Let $\area(p)$ to be the number of `lower triangles' (triangles whose vertex set is $\{(i,j),(i+1,j),(i+1,j+1)\}$) between $p$ and the diagonal $y=x$.
To define $\schbounce(p)$, first remove all the $H$ steps from $p$ and collapse in the obvious way to form a Dyck path $C(p)$.
Next construct the bounce path for $C(p)$ which is the classical Dyck bounce path from $(a,a)$ to $(0,0)$: 
move west until at the boundary of the path at that level; then move down until meeting $y=x$ and change direction to west; and repeat the previous two in that order.
Label the points at which it touches the Dyck path $\PeakDyck(1),\PeakDyck(2),\ldots$, beginning at the top left and moving towards the origin. These are labelled with red dots in the associated diagrams.

The bounce, $\mathrm{bounce}(C(p))$, is the sum of the $x$-coordinates of where the bounce path meets the diagonal (not including the initial point).
The $U$ steps of this bounce path occurring just before $D$ steps of the bounce path are also $U$ steps of $C(p)$. 
The corresponding $U$ steps of $p$ are called the `peaks' of $p$. 
Let us label the points (illustrated with red dots in the associated diagrams) which correspond to the tops of the peaks with $\PeakSchroder(1),\PeakSchroder(2),\ldots$, beginning at the top right and moving towards the origin.
For each $H$ step $\alpha$ of $p$ let $b(\alpha)$ denote the number of peaks above it, and define the bounce of the Schr\"oder path $p$ to be
\begin{equation*}
\schbounce(p) := \mathrm{bounce}(C(p)) +\sum_{\alpha} b(\alpha),
\end{equation*}
where the sum is over all $H$ steps of $p$. 
Egge et al.~\cite{ehkk} noted (just before their Conjecture 1) that N. Loehr observed that $\schbounce(p)$ equals the sum, over all peaks of the Schr\"oder path, of the number of first-quadrant squares to the left of each peak in the same row.
The bounce path may also be directly defined on a Schr\"oder paths by insisting that it moves parallel to each $H$ step as it passes through the anti-diagonal of the $H$ step, and then continues in the same direction it was moving before encountering the $H$ step until next hitting the Schr\"oder path. Such anti-diagonals are illustrated as the shaded grey regions in Figure~\ref{fig:reformulate-bounce}.
\begin{example}\label{example:three:three}
Consider $w= UHUDUHHDUDUDD\in \Schroder_{5,3}$, the same word that appeared in Example~\ref{itctopplingexample}.
The configuration that corresponds to $w$ via the bijection $\stc$ is 
\begin{equation*}
c=\stc(w) = (7,6,5,2,1; 5,4,4) \in \ClassOneParams{5}{3}.
\end{equation*}
The Schr\"oder path for $w$ is illustrated in Figure~\ref{fig:three:three}.
The bounce of the collapse $C(w)$ of $w$ is 1+3 since the bounce path of $C(w)$ meets the diagonal at positions $(3,3)$ and $(1,1)$.
The peaks of the bounce path are illustrated by red dots and these are transferred over to the tops of the corresponding U steps in the diagram of $w$. 
The sum over all horizontal steps $\alpha$ of the quantity $b(\alpha)$ is 1+1+2=4, since $b(\alpha)$ is the number of red dots (that represent peaks) above step $\alpha$ in the diagram for $w$.
Thus $\schbounce(w) = 4+4=8$.
Equivalently, as per Loehr's observation, for each peak, the number of first quadrant squares to its left and in the same row (going from bottom to top) is $\schbounce(w)=0+2+6=8$.
\begin{figure}
    \begin{tikzpicture}[scale=0.75]
		\begin{scope}[xshift=0,yshift=0]
       	\draw (0,0) grid (8,8);
       	\draw[line width=1] (0,0) -- (8,8);
       	\draw[line width=3,blue] (0,0) -- (0,1) -- (1,2) -- (1,3) -- (2,3) -- (2,4) -- (3,5) -- (4,6) -- (5,6) -- (5,7) -- (6,7) -- (6,8) -- (7,8) -- (8,8);
       	\foreach \x/\y in {1/1,2/2,3/3,3/4,4/4,4/5,5/5,6/6,7/7}{
			     	\fill[white!50!black] (\x-0.05 ,\y+0.05 ) -- +(0,0.75) -- +(-0.75,0) -- cycle;
       	}
       	\foreach \x/\y in {0/1,2/4,6/8}{
         	\fill[red] (\x,\y) circle (0.15);
       	}
		\node[above] at (6,10) {$\PeakSchroder(1)$};
		\draw[->,gray,line width=1.5] (6,10) to[out=-90,in=135] (5.8,8.2);
		\node[above] at (3.5,9) {$\PeakSchroder(2)$};
		\draw[->,gray,line width=1.5] (3.5,9) to[out=-100,in=135] (1.8,4.2);
		\node[above right] at (0,8) {$\PeakSchroder(3)$};
		\draw[->,gray,line width=1.5] (1.5,8.1) to[out=-90,in=135] (-0.2,1.2);
		\node[above left] at (0.75,1.5) {\tiny$\alpha_3$};
		\node[above left] at (2.75,4.5) {\tiny$\alpha_2$};
		\node[above left] at (3.75,5.5) {\tiny$\alpha_1$};
		\node at (4,-1) {$w= UHUDUHHDUDUDD$};
		\node at (4,-1.75) {$\area(w)=9$};
		\node at (4,-2.5) {$b(\alpha_1)+b(\alpha_2)+b(\alpha_3) = 1+1+2=4$};
		\node at (4,-3.25) {$\schbounce(w)=4+4=8$};
		\end{scope}
		\begin{scope}[xshift=300,yshift=40]
       	\draw (0,0) grid (5,5);
       	\draw[line width=1] (0,0) -- (5,5);
       	\draw[line width=3,blue] (0,0) -- (0,2) -- (1,2) -- (1,3) -- (2,3) -- (2,4) -- (3,4) -- (3,5) -- (5,5);
       	\draw[line width=1.5,dashed,red] (5,5) -- (3,5) -- (3,3) -- (1,3) -- (1,1) -- (0,1) -- (0,0);
       	\draw[line width=1.5,red] (3.7,5.2) -- (3.5,5) -- (3.7,4.8);
       	\foreach \x/\y in {0/1,1/3,3/5}{
         	\fill[red] (\x,\y) circle (0.15);
       	}
		\node[above] at (4,7) {$\PeakDyck(1)$};
		\draw[->,gray,line width=1.5] (4,7) to[out=-90,in=90] (3,5.2);
		\node[above] at (2,6) {$\PeakDyck(2)$};
		\draw[->,gray,line width=1.5] (2,6) to[out=-100,in=135] (0.8,3.2);
		\node[above] at (0,5) {$\PeakDyck(3)$};
		\draw[->,gray,line width=1.5] (-0.2,5.1) to[out=-90,in=135] (-0.2,1.2);
		\node at (2.5,-1) {$C(w)$};
		\node at (2.5,-1.75) {$\mathrm{bounce}(C(w)) = 1+3=4$};
		\end{scope}
     \end{tikzpicture}
\caption{The Schr\"oder path $w= UHUDUHHDUDUDD\in \Schroder_{5,3}$ from Example~\ref{example:three:two} is illustrated by the blue line in the left diagram.
The Dyck path $C(w)$ that represents the collapse of this path, achieved by removing horizontal steps, is illustrated to the right using a blue line. 
The bounce path of $C(w)$ starts at $(5,5)$ and runs to $(0,0)$. Since it hits the diagonal at positions $(3,3)$ and $(1,1)$ we have $\mathrm{bounce}(C(w)) = 1+3=4$. 
The peaks of the bounce path are illustrated with red dots in the right diagram. 
These peaks are copied to the tops of the corresponding $U$ steps on the original Schr\"oder path. 
The sum $\sum_{\alpha} b(\alpha)$ is the sum over all $H$ steps $\alpha$ in the Schr\"oder path of the statistic $b(\alpha)$, 
which represents the number of red dots above $\alpha$ in the diagram.\label{fig:three:three}}
\end{figure}
\end{example}

We demonstrate that the statistic $\topplebounce_{ITC}$ is closely related to the 
$\schbounce$ statistic on Schr\"oder path via the bijection $\phi$ of 
Dukes~\cite{ncf} composed with the mirror map $\mu$ on words. 
The mirror map is defined as follows:
given $w=w_1w_2\ldots w_k$, let 
\begin{equation*}
\mu(w) := \mu(w_k) ~ \mu(w_{k-1}) ~ \ldots ~ \mu(w_1),
\end{equation*}
where $\mu(U):=D$, $\mu(H):=H$, and $\mu(D):=U$.

\begin{theorem}\label{prop:itc-haglund}
Given a sorted configuration $c\in \ClassOneParams{n}{d}$, 
let $w=\mu \circ \phi^{-1}(c) \in\Schroder_{n,d}$.
Then 
  \[ 
\area(w) = \level(c)
	\mbox{ and }  
	 \schbounce(w) = \topplebounce_{ITC}(c)-(n+d).\]
\end{theorem}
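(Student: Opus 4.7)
The plan is to prove the two equalities of the theorem separately, using a reflection argument plus a direct double-counting for the area statement, and an inductive structural analysis of the bounce path of $w$ for the bounce statement.

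\emph{Area.} The map $\mu$ on Schr\"oder words corresponds geometrically to reflecting the path across the anti-diagonal $x+y=n+d$: under this reflection a lower triangle with vertex set $\{(i,j),(i+1,j),(i+1,j+1)\}$ is sent to the lower triangle $\{(n{+}d{-}j{-}1,n{+}d{-}i{-}1),(n{+}d{-}j,n{+}d{-}i{-}1),(n{+}d{-}j,n{+}d{-}i)\}$, and the main diagonal $y=x$ is preserved setwise. Hence $\area$ is $\mu$-invariant, and it suffices to prove $\area(p)=\level(c)$ for $p:=\stc^{-1}(c)$. Writing the area as $\sum_{k\in D\cup H}h_k$ where $h_k=u(k)-d(k)$ is the height above the diagonal after step $k$ of $p$ (with $u(k)$ and $d(k)$ the counts of $U$'s and $D$'s among $p_1\dots p_k$), two double-counts using the defining properties of $\stc$ give
\[\sum_{k\in D\cup H}u(k)=\sum_{j=1}^{n}(a_j+1),\qquad \sum_{k\in D\cup H}d(k)=\binom{n+1}{2}+nd-\sum_{i=1}^{d}b_i,\]
and subtracting using $n-\binom{n+1}{2}=-\binom{n}{2}$ yields $\area(p)=\height(c)-\left(\binom{n+d}{2}-\binom{d}{2}\right)=\level(c)$.

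\emph{Bounce.} Use the decomposition $\schbounce(w)=\mathrm{bounce}(C(w))+\sum_{\alpha}b(\alpha)$ (sum over $H$-steps of $w$) and prove separately that $\mathrm{bounce}(C(w))=\sum_{k=1}^{t}(k-1)p'_k$ and $\sum_{\alpha}b(\alpha)=\sum_{k=1}^{t}(k-1)q'_k$, from which $\schbounce(w)=\sum_{k}(k-1)(p'_k+q'_k)=\topplebounce_{ITC}(c)-(n+d)$ follows by direct expansion of the latter. Both identities rest on a single structural claim: the bounce path of $w$, read from $(n+d,n+d)$ to $(0,0)$, decomposes into exactly $t$ alternating $W$-then-$S$ phases, the $k$-th phase having a $W$-part comprising $q'_k$ $H$-jumps (a unit southwest diagonal move triggered each time the bounce crosses an anti-diagonal of an $H$-step of $w$) and $p'_k$ pure-$W$ unit moves, followed by a pure-$S$ move of length $p'_k$ ending on the diagonal at $(x_k,x_k)$ with $x_k:=n+d-\sum_{j\leq k}(p'_j+q'_j)$. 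The $k$-th bounce peak of $w$ then has $x$-coordinate $x_k$, so $\schbounce(w)=\sum_{k=1}^{t}x_k$. Equivalently, once this phase structure is known, ignoring the $H$-jumps recovers the Dyck bounce of $C(w)$ with segment lengths $p'_k$, while the $d$ horizontal steps of $w$ partition into groups of sizes $q'_1,\ldots,q'_t$ in which group $k$ consists of those $H$-steps whose top $y$-coordinate lies between those of the $k$-th and $(k-1)$-th bounce peaks, so that each such $H$ has exactly $k-1$ bounce peaks of strictly greater $y$-coordinate above it.

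\emph{Induction and obstacle.} I would establish the structural claim by induction on $t$. The inductive step identifies the first $W$-$S$ phase of $w$'s bounce with the first ITC round of $c$: the $q'_1$ topmost $H$-steps of $w$ correspond to the $H$-jumps of the first $W$-phase, the $p'_1$ topmost $U$-steps of $w$ correspond to its pure-$W$ contributions, and the portion of the Schr\"oder path lying in the sub-square $[0,n+d-p'_1-q'_1]^2$ (obtained by removing this outer frame) is $\mu\circ\stc^{-1}(c^{(1)})$ for a sorted recurrent configuration $c^{(1)}\in\ClassOneParams{n-p'_1}{d-q'_1}$ whose ITC sequence is the tail $[(q'_2,\ldots,q'_t),(p'_2,\ldots,p'_t)]$. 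The main obstacle will be making this reduction precise: one must verify that the round-$1$ ITC thresholds ($b_i=n$ for $i\leq q'_1$ and $a_j+1+q'_1\geq n+d$ for $j\leq p'_1$) translate under $\mu\circ\stc^{-1}$ into exactly the geometric features of the first bounce phase, and that the residual Schr\"oder path represents $c^{(1)}$ correctly, which requires careful bookkeeping for the re-sorting and the shifted vertex degrees on the smaller split graph $\Split_{n-p'_1,d-q'_1}$.
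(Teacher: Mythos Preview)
Your area argument is correct and is actually cleaner than the paper's: you exploit $\mu$-invariance of area to reduce to $p=\phi^{-1}(c)$ and then do a transparent double count, whereas the paper works directly with $w=\mu(p)$ via a coloured decomposition of lower triangles into green/magenta/orange types and a separate case analysis for words beginning with $H$. Your route avoids both the colouring and the case split.

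For the bounce, your structural claim is exactly what the paper proves, namely that the $k$-th peak of the Schr\"oder bounce sits at $\bigl(\sum_{j>k}(p'_j+q'_j),\,p'_k+\sum_{j>k}(p'_j+q'_j)\bigr)$, from which both $\mathrm{bounce}(C(w))=\sum(k-1)p'_k$ and $\sum_\alpha b(\alpha)=\sum(k-1)q'_k$ follow. The paper establishes this directly: it introduces a compressed configuration on $K_{n+1}$, argues the Dyck bounce of $C(w)$ has run-lengths $p'_0,p'_1,\ldots,p'_t$ by matching the toppling thresholds to the positions of $U$-steps, and then counts how many $H$ anti-diagonals are reinserted below each Dyck peak to get the $q'_i$ contribution.

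Your inductive reduction to $\Split_{n-p'_1,d-q'_1}$ has a genuine gap. The Schr\"oder path $w$ need not pass through the diagonal point $(x_1,x_1)$, so there is no sub-path of $w$ in the box $[0,x_1]^2$ that is itself a Schr\"oder word. Concretely, for $c=(7,6,5,2,1;5,4,4)$ (Example~\ref{itctopplingexample}) one has $x_1=5$, yet $w=UUDUDUHHDUDHD$ reaches $(3,5)$ after seven steps and then leaves the box at $(4,6)$; it never visits $(5,5)$. On the sandpile side the same example shows the problem: taking $c^{(1)}=(a_{p'_1+1},\ldots,a_n;b_{q'_1+1},\ldots,b_d)=(5,2,1;4,4)$ on $\Split_{3,2}$ gives a clique vertex of height $5$, equal to its degree, so $c^{(1)}$ is not even stable. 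The difficulty is not bookkeeping: after the $p'_1$ clique topplings of round~1, the remaining clique heights have been incremented by $p'_1$, which can push them above the threshold of the smaller graph, so the residual state is not a recurrent configuration on $\Split_{n-p'_1,d-q'_1}$ and the induction hypothesis does not apply. To make your structural claim rigorous you will need to argue directly on $w$ (as the paper does) rather than pass to a smaller split graph.
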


It is important to observe that the $\schbounce$ statistic is taken of the reverse of the Schr\"oder word in the above theorem. 
If we were to present this result without the notion of a reverse word, then it would require redefining the bounce statistic of 
Egge et al.~\cite{ehkk} {\em{or}} redefining the bijection of Dukes~\cite{ncf}.

\begin{example}\label{example:three:four}
Consider $c=(7,6,5,2,1; 5,4,4) \in \ClassOneParams{5}{3}$ that features in Examples~\ref{itctopplingexample} and \ref{example:three:three}.
We have 
\begin{align*}
w &= \mu \circ \phi^{-1}(c) \\
&= \mu (UHUDUHHDUDUDD) \\
&= UUDUDUHHDUDHD.
\end{align*}
\begin{figure}
    \begin{tikzpicture}[scale=0.75]
		\begin{scope}[xshift=0,yshift=0]
       	\draw (0,0) grid (8,8);
       	\draw[line width=1] (0,0) -- (8,8);
       	\draw[line width=3,blue] (0,0) -- (0,2) -- (1,2) -- (1,3) -- (2,3) -- (2,4) -- (3,5) -- (4,6) -- (5,6) -- (5,7) -- (6,7) -- (7,8) -- (8,8);
       	\foreach \x/\y in {1/1,2/2,3/3,3/4,4/4,4/5,5/5,6/6,7/7}{
			     	\fill[white!50!black] (\x-0.05 ,\y+0.05 ) -- +(0,0.75) -- +(-0.75,0) -- cycle;
       	}
       	\foreach \x/\y in {0/1,1/3,5/7}{
         	\fill[red] (\x,\y) circle (0.15);
       	}
		\node[above] at (6,10) {$\PeakSchroder(1)$};
		\draw[->,gray,line width=1.5] (6,10) to[out=-90,in=135] (4.8,7.2);
		\node[above] at (3.5,9) {$\PeakSchroder(2)$};
		\draw[->,gray,line width=1.5] (3.5,9) to[out=-100,in=135] (0.8,3.2);
		\node[above right] at (0,8) {$\PeakSchroder(3)$};
		\draw[->,gray,line width=1.5] (1.5,8.1) to[out=-90,in=135] (-0.2,1.2);
		\node[above left] at (2.75,4.5) {\tiny$\alpha_3$};
		\node[above left] at (3.75,5.5) {\tiny$\alpha_2$};
		\node[above left] at (6.75,7.5) {\tiny$\alpha_1$};
		\node at (4,-1) {$w= UUDUDUHHDUDHD.$};
		\node at (4,-1.75) {$\area(w)=9$};
		\node at (4,-2.5) {$b(\alpha_1)+b(\alpha_2)+b(\alpha_3) = 0+1+1=2$};
		\node at (4,-3.25) {$\schbounce(w)=4+2=6$};
		\end{scope}
		\begin{scope}[xshift=300,yshift=40]
       	\draw (0,0) grid (5,5);
       	\draw[line width=1] (0,0) -- (5,5);
       	\draw[line width=3,blue] (0,0) -- (0,2) -- (1,2) -- (1,3) -- (2,3) -- (2,4) -- (3,4) -- (3,5) -- (5,5);
       	\draw[line width=1.5,dashed,red] (5,5) -- (3,5) -- (3,3) -- (1,3) -- (1,1) -- (0,1) -- (0,0);
       	\draw[line width=1.5,red] (3.7,5.2) -- (3.5,5) -- (3.7,4.8);
       	\foreach \x/\y in {0/1,1/3,3/5}{
         	\fill[red] (\x,\y) circle (0.15);
       	}
		\node[above] at (4,7) {$\PeakDyck(1)$};
		\draw[->,gray,line width=1.5] (4,7) to[out=-90,in=90] (3,5.2);
		\node[above] at (2,6) {$\PeakDyck(2)$};
		\draw[->,gray,line width=1.5] (2,6) to[out=-100,in=135] (0.8,3.2);
		\node[above] at (0,5) {$\PeakDyck(3)$};
		\draw[->,gray,line width=1.5] (-0.2,5.1) to[out=-90,in=135] (-0.2,1.2);
		\node at (2.5,-1) {$C(w)$};
		\node at (2.5,-1.75) {$\mathrm{bounce}(C(w)) = 1+3=4$};
		\end{scope}
     \end{tikzpicture}
\caption{The Schr\"oder path $w= UUDUDUHHDUDHD\in \Schroder_{5,3}$ from Example~\ref{example:three:four}.\label{fig:three:four}} 
\end{figure}
We can now verify the statement of Theorem~\ref{prop:itc-haglund} for $c=(7,6,5,2,1; 5,4,4)$.
Notice that from Figure~\ref{fig:three:four} we have $\area(w)=9$. 
The quantity 
\begin{align*}
\level(c)=~
\height(c)-\left({n+d\choose 2} - {d\choose 2}\right) 
	&= 34 - \left( \binom{8}{2} - \binom{3}{2} \right) \\
	&= 34-(28-3) = 9 = \area(w),
\end{align*}
thereby verifying the first statement. 
Secondly, we have $\schbounce(w)=4+2=6$ from Figure~\ref{fig:three:four}. 
Since the ITC toppling is considered for this $c$ in Example~\ref{itctopplingexample}, we have 
$\topplebounce_{ITC}(c) = 14$.
These values now verify the second statement:
\begin{equation*}
\topplebounce_{ITC}(c)-(n+d) = 14-(5+3) = 6=\schbounce(w).
\end{equation*}
\end{example}

We split the proof of Theorem~\ref{prop:itc-haglund} into Theorem~\ref{lem:itc-area} and Theorem~\ref{lem:itc-bounce}, each being related to one of the two statistic equalities.
An illustration of the proof of Theorem~\ref{prop:itc-haglund} is given in Figure~\ref{fig:haglund-bounce}, 
and this is applied to the same path that appears in Haglund~\cite[Sec.1]{haglund-qtschroder}. 
An immediate corollary to Theorem~\ref{prop:itc-haglund} is the equality of the $q,t$-ITC polynomial with the $q,t$-Schr\"oder polynomial.

\begin{corollary} \label{corol:itc} For all $n\geq 1$ and $d\geq 0$, we have 
\[
\begin{array}{lcl}\displaystyle 
	\notrepoly_{n,d}^{ITC}(q,t) 
	&  	=  & \displaystyle \sum_{w \in \Schroder_{n,d}} q^{\area(w)}t^{\schbounce(w)}  
		=\langle\nabla e_{n+d},e_{n}h_d \rangle\end{array},\]
  a polynomial known to be symmetric in $q$ and $t$.
\end{corollary}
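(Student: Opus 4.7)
The proof will follow almost immediately from Theorem~\ref{prop:itc-haglund} combined with citations to the existing literature on the $q,t$-Schr\"oder polynomial. The plan is to break it into three short parts.

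First, I would observe that the composition $\mu\circ\phi^{-1}$ is a bijection from $\ClassOneParams{n}{d}$ to $\Schroder_{n,d}$, being the composition of the bijection $\phi$ of \cite{ncf} (recalled in Section~\ref{sectionthree}) with the involution $\mu$ on Schr\"oder words. Using Theorem~\ref{prop:itc-haglund}, for each $c\in \ClassOneParams{n}{d}$ the corresponding $w=\mu\circ\phi^{-1}(c)$ satisfies $\area(w)=\level(c)$ and $\schbounce(w)=\topplebounce_{ITC}(c)-(n+d)$. Substituting these identities directly into the definition of $\notrepoly_{n,d}^{ITC}(q,t)$ and re-indexing the sum over $w\in\Schroder_{n,d}$ yields
\[
\notrepoly_{n,d}^{ITC}(q,t)
=\sum_{c\in\ClassOneParams{n}{d}} q^{\level(c)}t^{\topplebounce_{ITC}(c)-(n+d)}
=\sum_{w\in\Schroder_{n,d}} q^{\area(w)}t^{\schbounce(w)}.
\]
This establishes the first equality of the corollary; the only nontrivial content is the bijection, which is already established.

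Second, for the identification with $\langle\nabla e_{n+d},\,e_n h_d\rangle$, I would simply cite the main result of Egge--Haglund--Killpatrick--Kremer~\cite{ehkk}: the right-hand sum over Schr\"oder paths with statistics $(\area,\schbounce)$ is, by their definition, the $q,t$-Schr\"oder polynomial, and they identify it with the symmetric function scalar product $\langle\nabla e_{n+d},\,e_n h_d\rangle$. No additional argument is needed here beyond matching conventions (our $n,d$ correspond to their parameters for $n$ up-steps and $d$ horizontal steps).

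Third, the symmetry of $\langle\nabla e_{n+d},\,e_n h_d\rangle$ in $q$ and $t$ follows from known properties of the nabla operator and the pairing against $e_n h_d$. I would cite this directly from the literature on the $q,t$-Schr\"oder polynomial (originating in the conjectures of \cite{ehkk} and subsequently proved via the work on the shuffle conjecture). Since the equality with $\notrepoly_{n,d}^{ITC}(q,t)$ has just been established, the symmetry transfers to our polynomial. The main obstacle in the proof is entirely hidden inside Theorem~\ref{prop:itc-haglund} (proved via Theorems~\ref{lem:itc-area} and~\ref{lem:itc-bounce}); granted that theorem, the corollary is a one-line substitution plus invocations of \cite{ehkk}.
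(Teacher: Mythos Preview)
Your proposal is correct and essentially identical to the paper's (implicit) approach: the paper simply declares the corollary to be immediate from Theorem~\ref{prop:itc-haglund}, and your three-part breakdown makes explicit exactly the substitution and citations that justify this. One small citation point: the identification $\sum_{w} q^{\area(w)}t^{\schbounce(w)}=\langle\nabla e_{n+d},e_n h_d\rangle$ was conjectured in~\cite{ehkk} but proved by Haglund in~\cite{haglund-qtschroder}, so you should cite the latter for the second equality (the symmetry in $q,t$ then follows directly from that symmetric-function expression, not via the later shuffle conjecture work).
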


In Proposition~\ref{qt:symmetry} we present an accessible proof of this symmetry in $q$ and $t$ that relies on a combinatorial interpretation 
of $\langle\nabla e_{n+d},e_{n}h_d\rangle $ due to Haglund~\cite{haglund:monograph}.

Let us mention that there is an extension of part of this corollary in the recent paper 
D'Adderio, Dukes, Iraci, Lazar, Le~Borgne, and Vanden Wyngaerd et al.~\cite[Theorem 2.9]{ddillw} 
that is not dependent upon a lattice path interpretation.

  \begin{figure}[ht!]
    \begin{minipage}{8cm}
    \begin{tikzpicture}[scale=0.75]
      \draw (0,0) grid (8,8);
      \draw[line width=0.5] (0,0) -- (8,8);
      \draw[line width=3,blue] (0,0) -- (0,1) -- (1,2) -- (1,3) -- (2,3) -- (2,4) -- (4,6) -- (5,6) -- (5,7) -- (6,7) -- (6,8) -- (8,8);
      \draw[line width=3,blue!30!white] (8,8) -- (8,9) -- (9,9);
      \foreach \x/\ux in {7/7_C,6/7_C,5/6_C,4/5_C,3/3_I,2/3_I,1/2_C,0/1_I}{
        \draw node at (\x+0.5,8.3) {$\ux$};
      }
      \draw node at (8.5,8.3) {$s$};
      \foreach \x/\y/\r in {0/0/1_*,1/2/2_*,2/3/3_*,5/6/4_*,6/7/5_*}{
        \draw node[blue] at (\x-0.3,\y+0.5) {$\r$};
      }
      \draw[line width=1.5,red,dashed] (9,9) -- (8,9) -- (8,8) -- (6,8) -- (6,6) -- (4,6) -- (2,4) -- (2,2) -- (1,2) -- (0,1) -- (0,0);
      \draw[line width=1.5,red] (7.1,8.2) -- (6.9,8) -- (7.1,7.8);
      \foreach \x/\y/\mylsc/\myli/\iloop in {0/1/1/1/3,2/4/2/2/2,6/8/2/0/1,8/9/1/0/0}{
        \draw[line width=0.5,red] (\x,\y) -- (\x,10);
        \fill[red] (\x,\y) circle (0.15);
        \draw node[anchor=west] at (\x,11) {$q'_{\iloop}=\myli$}; 
        \draw node[anchor=west] at (\x,10.2) {$p'_{\iloop}=\mylsc$}; 
      }
      \foreach \x/\y in {3/1,4/1,4/4}{
          \fill[orange,rounded corners] (\x-0.1,\y+0.1) -- (\x-0.7,\y+0.1) -- (\x-0.1,\y+0.7) -- cycle; 
        }
      \foreach \x/\y in {2/1,2/2,5/1,5/2,5/3,5/4,5/5,6/1,6/2,6/3,6/4,6/5,6/6,7/1,7/2,7/3,7/4,7/5,7/6,7/7,8/1,8/2,8/3,8/4,8/5,8/6,8/7}{
          \fill[green!50!brown,rounded corners] (\x-0.1,\y+0.1) -- (\x-0.7,\y+0.1) -- (\x-0.1,\y+0.7) -- cycle; 
        }
      \foreach \x/\y in {1/1,3/2,3/3,3/4,4/2,4/3,4/5}{
          \fill[magenta!80!blue,rounded corners] (\x-0.1,\y+0.1) -- (\x-0.7,\y+0.1) -- (\x-0.1,\y+0.7) -- cycle; 
        }
      \end{tikzpicture}
    \end{minipage}
    \begin{minipage}{6.3cm}
	\ \\[1.5em]
      
      $c^{[0]} = (7_C,7_C,6_C,5_C,2_C;3_I,3_I,1_I)$
	\smallskip

      Topple sink:
	\smallskip

      $c^{[1]} = (8_C,8_C,7_C,6_C,3_C;4_I,4_I,2_I)$
	\smallskip

      Loop $0$: Topple no independent.
	\smallskip

      $c^{[2]} = c^{[1]}$
	\smallskip
      
      Loop $0$: Topple clique $7_C,7_C$.
	\smallskip
      
      $c^{[3]} = (1_C,1_C,9_C,8_C,5_C;6_I,6_I,4_I)$
	\smallskip

      Loop $1$: Topple independent $3_I,3_I$.
	\smallskip

      $c^{[4]} = (3_C,3_C,11_C,10_C,6_C;0_I,0_I,4_I)$
	\smallskip

      Loop $1$: Topple clique $6_C,5_C$.
	\smallskip

      $c^{[5]} = (5_C,5_C,3_C,2_C,8_C;2_I,2_I,6_I)$
	\smallskip

      Loop $2$: Topple independent $1_I$.
	\smallskip

      $c^{[6]} = (6_C,6_C,4_C,3_C,9_C;2_I,2_I,0_I)$
	\smallskip

      Loop $2$: Topple clique $2_C$.
	\smallskip

      $c^{[7]} = (7_C,7_C,6_C,5_C,2_C;3_I,3_I,1_I)$
    \end{minipage}
	\caption{Configuration $c=(7_C,7_C,6_C,5_C,2_C;3_I,3_I,1_I) \in \ClassOneParams{5}{3}$ 
	and its related Schr\"oder word $w=UHUDUHHDUDUDD$ that corresponds to the example Schr\"oder path in Haglund~\cite[Sec.1]{haglund-qtschroder}.
	The peaks are indicated with red dots and the bounce path is the red dotted line.
	To the right we follow the ITC-toppling process for $c$. 
	Note that the degree of a clique vertex is $n+d=8$ while the degree of an independent vertex is $n+1=6$.
	The green, magenta, and orange triangles are explained in the proof of Theorem~\ref{lem:itc-area}. 
	Green triangles correspond to clique vertices/columns, while the other two colours correspond to independent vertices/columns. 
	A triangle associated with an independent vertex will be orange if there are horizontal $H$ steps on the path in both its column and row.
	Otherwise those triangles are magenta.
 	\label{fig:haglund-bounce}\label{fig:two}
	}
  \end{figure}

\subsection{Mapping configuration height to Schr\"oder path area}

By considering the definition of $\mu \circ \phi^{-1}$ it is evident that, in the word $w$, 
the step that crosses each column encodes the number of grains on a vertex in the recurrent configuration.
More precisely, a column crossed by a $D$ step corresponds to a clique vertex and its number 
of grains is the number of rows below this horizontal step, less one.
A column crossed by a $H$ step corresponds to an independent vertex whose number of grains 
is the number of vertical $(U)$ steps in rows below (or, equivalently, to the left of) this diagonal $H$ step. 
(These are indicated in Figure~\ref{fig:haglund-bounce} by the blue $1_*$, $2_*$, \ldots, labels.)

\begin{theorem}\label{lem:itc-area}
Given $c\in\ClassOneParams{n}{d}$, let $w=\mu \circ \phi^{-1}(c)\in \Schroder_{n,d}$.
Then 
\[\area(w) = 
\level(c).
\]
\end{theorem}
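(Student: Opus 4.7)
The strategy is to express both $\area(w)$ and $\height(c)$ as sums indexed by the $n+d$ columns of the Schr\"oder path $w$, and then to observe that they differ by the constant $\binom{n+d}{2}-\binom{d}{2}=\binom{n}{2}+nd$ appearing in the definition of $\level(c)$. Throughout, I index columns by $k=1,\ldots,n+d$, where column $k$ is the strip $[k-1,k]\times\mathbb{R}$. Because $U$-steps are vertical, each column is crossed by exactly one step of $w$, either an $H$-step or a $D$-step. Let $h_k$ denote the $y$-coordinate of $w$ at $x=k$, and $H_k^{(w)}$ the number of $H$-steps among the first $k$ non-$U$ steps of $w$.

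First I would count the area column by column. Among the lower triangles $T_{k-1,j}$ with vertices $(k-1,j),(k,j),(k,j+1)$ in column $k$, those lying weakly above the diagonal $y=x$ are precisely those with $j\ge k$, and those lying weakly below the path are precisely those with $j\le h_k-1$; this characterisation applies uniformly to $D$- and $H$-columns, once one observes that in an $H$-column the borderline triangle $T_{k-1,h_k-1}$ has its hypotenuse on the $H$-step itself and so is included. Thus each column contributes $h_k-k$ lower triangles, and
\begin{equation*}
\area(w)=\sum_{k=1}^{n+d}(h_k-k)=\sum_{k=1}^{n+d}h_k-\binom{n+d+1}{2}.
\end{equation*}

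Next I would evaluate $\height(c)$ using the vertex-by-vertex description of $\mu\circ\phi^{-1}$ recalled in the paragraph preceding the theorem. A $D$-column $k$ contributes $h_k-1$ grains to its associated clique vertex. An $H$-column $k$ contributes to its associated independent vertex the number of $U$-steps of $w$ that precede the $H$-step; since just before this $H$-step the $y$-coordinate is $h_k-1$, with $H_k^{(w)}-1$ units coming from prior $H$-steps, this count equals $h_k-H_k^{(w)}$. If $k_1<\cdots<k_d$ are the positions of the $H$-columns then $H_{k_i}^{(w)}=i$, so
\begin{equation*}
\height(c)=\sum_{D\text{-col }k}(h_k-1)+\sum_{H\text{-col }k}(h_k-H_k^{(w)})=\sum_{k=1}^{n+d}h_k-n-\binom{d+1}{2}.
\end{equation*}
Subtracting the two displays gives $\height(c)-\area(w)=\binom{n+d+1}{2}-\binom{d+1}{2}-n$, which a short algebraic manipulation reduces to $\binom{n}{2}+nd=\binom{n+d}{2}-\binom{d}{2}$, yielding $\area(w)=\level(c)$.

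The main technical point is the uniform column-wise count of lower triangles, and in particular the need to include the borderline triangle in an $H$-column whose hypotenuse coincides with the $H$-step; once this is settled, the rest amounts to bookkeeping and the closed-form identity above.
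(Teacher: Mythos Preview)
Your proof is correct. Both you and the paper argue by counting contributions column-by-column, but the organisations differ enough to be worth noting. The paper counts \emph{all} lower triangles beneath the path (above the first row) in two ways, introducing a colour-coding (green/magenta/orange) of triangles according to whether the column and row steps are $D$ or $H$; this forces a case split on whether $w$ begins with $U$ or with $H^j$, the latter case being reduced to the former via an auxiliary configuration $c'$ on $\Split_{n,d-j}$. Your argument is more direct: by isolating the single statistic $h_k$ per column and reading off both $\area(w)=\sum_k(h_k-k)$ and $\height(c)=\sum_k h_k - n - \binom{d+1}{2}$ uniformly, you sidestep the case analysis and the colour bookkeeping entirely, at the cost of having to handle the borderline triangle in an $H$-column explicitly (which you do). The paper's decomposition makes the geometric picture more vivid and ties neatly into the figures; yours is shorter and needs no auxiliary configuration.
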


\begin{proof}
Let $c\in\ClassOneParams{n}{d}$ with $c=(a_1,\ldots,a_n;b_1,\ldots ,b_d)$ and set $w=\mu \circ \phi^{-1}(c)$.
We will prove that $\mathsf{level}(\phi \circ \mu(w)) = \mathsf{area}(w)$ where 
$\mathsf{level}(c)=\mathsf{height}(c)-{n+d \choose 2}+{d \choose 2}$ is the number of grains in $c$ minus the number of edges non-incident to the sink in $\Split_{n,d}$. 
We prefer to do this by first proving it for words with first letter $U$, and then use this result to prove it for words with first letter $H$.

If $w$ starts with the letter $U$ (see Figure~\ref{fig:haglund-bounce})
we have two ways to count the number of lower triangles
in all the squares below the Schr\"oder path excluding those on the first row:
\begin{equation*}
\sum_{k=1}^{n} a_k   +   \sum_{k=1}^{d} b_k + {d \choose 2} = \mathsf{area}(w)+{n+d\choose 2}.
\end{equation*}
On the left hand side, the first sum corresponds to those lower triangles in columns corresponding to clique vertices 
(these are the green lower triangles in Figure~\ref{fig:haglund-bounce}).
More precisely, in the column of a clique vertex (which are those with a $D$ step), the green triangles also count the number of grains given by the definition of $\mu \circ \phi^{-1}$.

The remainder of the left-hand-side concerns columns that correspond to independent vertices (those columns crossed by a $H$ step). 
There are two cases to consider here.
\begin{itemize}
\item If a lower triangle has a $H$ step beside it, then let us call it a {\it magenta lower triangle}. 
Likewise, if a lower triangle has a $H$ step in its column and a $U$ step in its row, then also call it a magenta lower triangle. 
\item If a lower triangle has a $H$ step in its column and a different $H$ step in its row, then we call it an {\it orange lower triangle}. 
\end{itemize}
Note that there will be ${d \choose 2}$ orange triangles since there are $d$ $H$ steps and every pair of distinct $H$ steps gives rise to an orange lower triangle.
The number of magenta triangles corresponds to $ \sum_{k=1}^{d} b_k$.
On the right-hand side, the lower triangles are either counted by $\mathsf{area}(w)$ for those above the main diagonal and ${n+d \choose 2}$ for those below. 
The identity may be rewritten:
\begin{align*}
\area(w) 
	= & \displaystyle \sum_{k=1}^{n} a_k + \sum_{k=1}^{d} b_k + {d \choose 2} - {n+d\choose 2}
	=  \displaystyle \height(c)-\left ({n+d\choose 2} -{d \choose 2}\right) = \level(c).
\end{align*}
  
Alternatively, if $w$ begins with a $H$ then set $w=H^jw'$ where $j\geq 1$ and $w'$ begins with the letter $U$.
For this case $\area(w)=\area(w')$ and there are $j$ independent vertices having $0$ grains that we can simply ignore in 
$\sum_{k=1}^{d} b_k=\sum_{k=1}^{d-j} b_k$. An example of this case is illustrated in Figure~\ref{fig:bouncebis}.

  \begin{figure}[ht!]
    \begin{tikzpicture}[scale=0.75]
      \draw (-3,-3) grid (8,8);
      \draw[line width=0.5] (-3,-3) -- (8,8);
      \draw[line width=3,blue] (-3,-3) -- (0,0) -- (0,1) -- (1,2) -- (1,3) -- (2,3) -- (2,4) -- (4,6) -- (5,6) -- (5,7) -- (6,7) -- (6,8) -- (8,8);
      \foreach \x/\ux in {7/10_c,6/10_c,5/9_c,4/8_c,3/3_i,2/3_i,1/5_c,0/1_i,-1/0_i,-2/0_i,-3/0_i}{
        \draw node at (\x+0.5,8.3) {$\ux$};
      }
      \foreach \x/\y/\r in {0/0/1_*,1/2/2_*,2/3/3_*,5/6/4_*,6/7/5_*}{
        \draw node[blue] at (\x-0.3,\y+0.5) {$\r$};
      }
      \draw[line width=1.5,red,dashed] (8,8) -- (6,8) -- (6,6) -- (4,6) -- (2,4) -- (2,2) -- (1,2) -- (0,1) -- (0,0);
      \draw[line width=1.5,red] (7.1,8.2) -- (6.9,8) -- (7.1,7.8);
      \foreach \x/\y in {0/1,2/4,6/8}{
        \fill[red] (\x,\y) circle (0.15);
      }
      \foreach \x/\y in {3/1,4/1,4/4}{
          \fill[orange,rounded corners] (\x-0.1,\y+0.1) -- (\x-0.7,\y+0.1) -- (\x-0.1,\y+0.7) -- cycle; 
        }
      \foreach \x/\y in {2/1,2/2,5/1,5/2,5/3,5/4,5/5,6/1,6/2,6/3,6/4,6/5,6/6,7/1,7/2,7/3,7/4,7/5,7/6,7/7,8/1,8/2,8/3,8/4,8/5,8/6,8/7}{
          \fill[green!50!brown,rounded corners] (\x-0.1,\y+0.1) -- (\x-0.7,\y+0.1) -- (\x-0.1,\y+0.7) -- cycle; 
        }
      \foreach \x/\y in {1/1,3/2,3/3,3/4,4/2,4/3,4/5}{
          \fill[magenta!80!blue,rounded corners] (\x-0.1,\y+0.1) -- (\x-0.7,\y+0.1) -- (\x-0.1,\y+0.7) -- cycle; 
        }
      \foreach \x/\y in {-1/-2,0/-2,0/-1}{
          \fill[blue,rounded corners] (\x-0.1,\y+0.1) -- (\x-0.7,\y+0.1) -- (\x-0.1,\y+0.7) -- cycle; 
        }
      \foreach \x/\y in {1/-2,1/-1,1/0,3/-2,3/-1,3/0,4/-2,4/-1,4/0}{
          \fill[brown,rounded corners] (\x-0.1,\y+0.1) -- (\x-0.7,\y+0.1) -- (\x-0.1,\y+0.7) -- cycle; 
        }
      \foreach \x/\y in {2/-2,2/-1,2/0,5/-2,5/-1,5/0,6/-2,6/-1,6/0,7/-2,7/-1,7/0,8/-2,8/-1,8/0}{
          \fill[white!50!black,rounded corners] (\x-0.1,\y+0.1) -- (\x-0.7,\y+0.1) -- (\x-0.1,\y+0.7) -- cycle; 
        }        
      \end{tikzpicture}
  \caption{A word having prefix $H$. Here $w=HHHw'$ so that $j=3$.\label{fig:bouncebis}}
  \end{figure}

Again, we can count the number of lower triangles beneath $w$ in two different ways. 
The only difference to the previous counting argument is that there are now $j$ initial $H$ steps 
and these give rise to three new lower triangle types to which we associate the colours blue, brown, and grey.
Blue lower triangles are those that are formed as a result of the initial $j$ $H$ steps (the number of these will be ${j\choose 2}$) of $w$.
Brown lower triangles correspond to columns that are traversed by a $H$ step and have one of the initial $j$ $H$ steps in the same row. 
Grey lower triangles correspond to columns that are traversed by a $D$ step and have one of the initial $H$ steps in the same row.
This gives the equation:
\begin{align*}
      \lefteqn{{j \choose 2}+j(n-j) + \sum_{k=1}^{n} (a_k-j)  +   jn   + \sum_{k=1}^{d-j} b_k   + {n-j \choose 2}\qquad\qquad} \\
      = & \area(w')+{j \choose 2}+ j(n-j) + jd + {n+d-j\choose 2}.
\end{align*}
After simplifying we obtain:
\begin{equation*}
\sum_{k=1}^{n} (a_k-j) + \sum_{k=1}^{d-j} b_k + {n-j \choose 2} = \area(w')+ {(n-j)+d\choose 2}.
\end{equation*}
This expression corresponds to the identity for the previous case for configuration 
\begin{equation*}
c':= (a_1-j,\ldots,a_n-j; b_1,b_2,\ldots,b_{d-j}) = \mu \circ \phi^{-1}(w').
\end{equation*}
By the first case, we have $\level(c')=\area(w')$. 
Since the number of grains in $c$ and $c'$ differs by $jd$, which is also the difference of the number of edges non-incident to sink between $\Split_{n,d}$ and $\Split_{n,d-j}$, 
we deduce that $\level(c)=\level(c')$. 
We already observed that $\area(w)=\area(w')$. 
Hence $\level(c)=\level(c')=\area(w')=\area(w)$.
\end{proof}

\subsection{Mapping $\topplebounce_{ITC}$ to the statistic $\schbounce$}

\begin{theorem}\label{lem:itc-bounce} 
Given $c\in \ClassOneParams{n}{d}$,
let $w=\mu \circ \phi^{-1}(c) \in\Schroder_{n,d}$.
Then \[ \topplebounce_{ITC}(c)-(n+d) = \schbounce(w).\]
\end{theorem}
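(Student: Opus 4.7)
The plan is to decompose both sides of the claimed equality into a clique contribution and an independent contribution, then match them separately. Since every non-sink vertex topples exactly once during ITC, $\sum_{i\geq 1}(p'_i+q'_i)=n+d$, and hence
\[ \topplebounce_{ITC}(c) - (n+d) \;=\; \sum_{i\geq 1}(i-1)\,p'_i \;+\; \sum_{i\geq 1}(i-1)\,q'_i. \]
The path-side decomposition $\schbounce(w) = \mathrm{bounce}(C(w)) + \sum_\alpha b(\alpha)$ suggests matching $\mathrm{bounce}(C(w))$ against the clique sum and $\sum_\alpha b(\alpha)$ against the independent sum, and it suffices to prove each of these matchings.

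For the clique contribution, write $P_k := p'_1 + \cdots + p'_k$ and $Q_k := q'_1 + \cdots + q'_k$. I would prove by induction on the number of ITC iterations $t$ the sharper claim that the Dyck bounce path of $C(w)$ meets the diagonal at exactly the heights $n - P_k$ for $k = 0, 1, \ldots, t$. Two ingredients drive this induction: the formula extractable from the proof of Theorem~\ref{lem:itc-area} that the $D$ step of $w$ encoding the clique vertex $v_j$ sits in row $a_j + 1$; and the ITC dynamics on a sorted configuration, which imply (by tracking grain counts through the iterations) that $v_j$ is toppled at iteration $i$ precisely when $a_j$ lies in the interval $[\,n+d-1-Q_i-P_{i-1},\ n+d-1-Q_{i-1}-P_{i-2}\,)$. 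Together, these place the $p'_i$ clique vertices of iteration $i$ in a contiguous row band of $w$, which after collapsing the $H$ steps becomes exactly bounce level $i$ of $C(w)$. Summing the touch-point heights then gives $\mathrm{bounce}(C(w)) = \sum_{k=1}^{t-1}(n - P_k) = \sum_{i\geq 1}(i-1)\, p'_i$.

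For the independent contribution, I would show that each $H$ step $\alpha$ of $w$ encoding an independent vertex $w_k$ toppled at iteration $j$ satisfies $b(\alpha) = j - 1$. The analogous formula for $\mu\circ\phi^{-1}$ places the $H$ step encoding $w_k$ in row $b_k + d - k + 1$ of $w$, and an ITC-dynamics argument mirroring the one above identifies the iteration $j$ at which $w_k$ topples as the unique $j$ such that $b_k$ lies in the corresponding interval. Combined with the bounce-band description already established, the row of $\alpha$ sits strictly below the first $j-1$ peaks of the bounce path and weakly above the $j$th peak, which gives $b(\alpha) = j - 1$ and hence $\sum_\alpha b(\alpha) = \sum_{j\geq 1}(j-1)\, q'_j$.

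The main obstacle is the inductive step for the clique identity, where one must verify that the contiguous row band of $w$ housing the $D$-steps of the $p'_i$ clique vertices toppled at iteration $i$ corresponds, after collapsing the $H$'s, exactly to bounce level $i$ of $C(w)$. This requires careful bookkeeping of how many $H$ steps fall inside each bounce band, so that the row-shift induced by the $H$'s aligns the clique-vertex interval in $w$ with the appropriate bounce level in $C(w)$. Once this matching is secured, the independent equality follows as a byproduct because it relies on the very same row-band analysis applied to the $H$ steps.
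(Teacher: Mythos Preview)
Your proposal is correct and follows essentially the same decomposition as the paper: both match $\mathrm{bounce}(C(w))$ against $\sum_i (i-1)p'_i$ and $\sum_\alpha b(\alpha)$ against $\sum_i (i-1)q'_i$. The paper sidesteps the bookkeeping obstacle you anticipated by passing to a compressed configuration $c'$ on $K_{n+1}$ (obtained by deleting the $H$ steps) and computing the peak coordinates $\PeakDyck(i)$ and $\PeakSchroder(i)$ in closed form, so that the alignment of the ITC iteration blocks with the bounce levels falls out directly rather than via induction and row-by-row tracking.
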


   \begin{figure}[ht!]
     \begin{tikzpicture}[scale=0.75]
       \fill[white!80!black] (0,5) -- (0,3) -- (3,0) -- (5,0) -- cycle;
       \fill[white!80!black] (0,9) -- (0,7) -- (7,0) -- (9,0) -- cycle;
       \fill[white!80!black] (4,9) -- (9,4) -- (9,6) -- (6,9) -- cycle;
       \fill[white!80!black] (7,9) -- (9,7) -- (9,9) -- (9,9) -- cycle;
       \draw[line width=6,orange] (0,2) -- (0,4) -- (2,6) -- (4,6);
       \draw[line width=6,orange] (4,7) -- (6,9) -- (9,9);
       \draw[line width=6,green!50!brown] (10,10) -- (9,10) -- (9,9) -- (7,7) -- (4,7) -- (4,4) -- (2,2) -- (0,2) -- (0,0);
       \draw (0,0) grid (9,9);
       \draw[line width=1] (0,0) -- (10,10);
       \draw[line width=3,blue!30!white] (10,10) -- (9,10) -- (9,9);
       \draw[line width=3,blue] (9,9) -- (8,8) -- (7,8) -- (6,7) -- (4,7) -- (4,6) -- (3,6) -- (2,5) -- (1,5) -- (1,4) -- (0,3) -- (0,0);
       \draw[line width=1.5,dashed,red] (10,10) -- (9,10) -- (9,9) -- (8,8) -- (7,8) -- (6,7) -- (4,7) -- (4,6) -- (4,5) -- (3,4) -- (3,3) -- (2,3) -- (1,2) -- (0,2) -- (0,0);
       \draw[line width=1.5,red] (9.6,10.2) -- (9.4,10) -- (9.6,9.8);
       \foreach \x/\y in {9/10,4/7,0/2}{
         \fill[red] (\x,\y) circle (0.15);
       }
       \draw[line width=1,<->,black!50!brown] (7.2,6.8) -- (9.2,8.8);
       \draw node[black!50!brown,anchor=west] at (8.2,7.8) {$q'_1=2$};
       \draw[line width=1,<->,black!50!brown] (4,6.7) -- (7,6.7);
       \draw node[black!50!brown,anchor=north] at (5.5,6.7) {$p'_1=3$};
       \draw[line width=1,<->,black!50!brown] (4.2,6) -- (4.2,4);
       \draw node[black!40!brown,anchor=west] at (4.2,5.5) {$p'_1-1$};
       \draw[line width=1,<->,black!50!brown] (2.2,1.8) -- (4.2,3.8);
       \draw node[black!50!brown,anchor=west] at (3.2,2.8) {$q'_2=2$};
       \draw[line width=1,<->,black!50!brown] (0,1.7) -- (2,1.7);
       \draw node[black!50!brown,anchor=north] at (1,1.7) {$p'_2=2$};
     \end{tikzpicture}
     \begin{tikzpicture}[scale=0.75]
       \draw[line width=2,white!80!black] (3.5,-0.5) -- (-0.5,3.5);
       \draw[line width=2,white!80!black] (5.5,-0.5) -- (-0.5,5.5);
       \draw[line width=2,white!80!black] (3.5,5.5) -- (5.5,3.5);
       \draw[line width=2,white!80!black] (4.5,5.5) -- (5.5,4.5);
       \draw (0,0) grid (5,5);
       \draw[line width=1] (0,0) -- (6,6);
       \draw[line width=3,blue!30!white] (6,6) -- (5,6) -- (5,5);
       \draw[line width=3,blue] (5,5) -- (2,5) -- (2,4) -- (0,4) -- (0,0);
       \draw[line width=1.5,dashed,red] (6,6) -- (5,6) -- (5,5) -- (2,5) -- (2,2) -- (0,2) -- (0,0);
       \draw[line width=1.5,red] (5.6,6.2) -- (5.4,6) -- (5.6,5.8);
       \foreach \x/\y in {5/6,2/5,0/2}{
         \fill[red] (\x,\y) circle (0.15);
       }
     \end{tikzpicture}
     \caption{Reformulation of the bounce path directly on Schr\"oder paths instead of on the Dyck path subword formed by deleting $H$ steps. 
	Configuration $c=(7,6,6,5,4;5,5,4,3)\in \ClassOneParams{5}{4}$ maps to $c'=(4,4,4,3,3)$ that is recurrent on $\Split_{5,0}=K_6$. 
	Notice 
	$\PeakDyck(2)=(0,p'_2)$ and $\PeakSchroder(2)=(0,p'_2)$;
       $\PeakDyck(1)=(p'_2,p'_2+p'_1)$ and $\PeakSchroder(1)=(p'_2+q'_2,p'_2+p'_1+q'_2)$; 
       $\PeakDyck(0)=(p'_2+p'_1,p'_2+p'_1+p'_0)$ and $\PeakSchroder(0)=(p'_2+p'_1+q'_2+q'_1,p'_2+p'_1+p'_0+q'_2+q'_1)$
	}
	\label{fig:reformulate-bounce} 
	\label{fig:four}
   \end{figure}

\begin{proof}
Recall that there are a total of $n+d+1$ vertices in the graph $\Split_{n,d}$. 
Suppose $c\in\ClassOneParams{n}{d}$ with 
\begin{equation*}
\ToppleRecord_{ITC}(c) = (Q'_1,P'_1,\ldots,Q'_k,P'_k).
\end{equation*}
We think of this toppling process as consisting of $k+1$ loops where loop $i$ is the toppling of vertices $Q'_i$ followed by $P'_i$. 
(The $0$th loop corresponds to the initial toppling of the sink and $Q'_0=\emptyset$, $P'_0=\{s\}$.)
Its ITC-toppling sequence is
$$((q'_1,\ldots,q'_k),~ (p'_1,\ldots,p'_k)),$$ 
and recall from Definition~\ref{twosix} the convention $q'_0:=0$ and $p'_0=1$.
Let us define a sequence of pairs $$(\Peakloop(0), \Peakloop(1),\ldots, \Peakloop(k))$$ that we think of as points in the plane
where $\Peakloop(i):=(x_i,y_i)$ and
\begin{itemize}
\item $x_i$ is the number of untoppled graph vertices at the end of the $k$-th loop iteration, i.e. just after the clique vertices of $P'_i$ have been parallel-toppled,
\item $y_k$ is the number of untoppled vertices in the middle of the $k$-th loop iteration, i.e. just after the independent vertices of $Q'_i$ have been parallel-toppled.
\end{itemize}
By definition of the ITC-toppling sequence, we have
\begin{equation*}
x_i = n+d+1-\sum_{j=0}^{i}(q'_j+p'_j),
\mbox{ and } 
y_i =  n+d+1-\left(\sum_{j=0}^{i-1} q'_j+p'_j \right)  - q'_i.
\end{equation*}
In particular, 
\begin{align*}
\Peakloop(1) 
	= & (n+d-q'_1-p'_1,n+d-q'_1), \mbox{ and } \\
\Peakloop(2) = & (n+d-q'_1-p'_1-q'_2-p'_2,n+d-q'_1-p'_1-q'_2).
\end{align*}
As the ITC-toppling sequence induces a partition $(P'_i)_{i=0,\ldots,k}$ of sink and clique vertices: 
\begin{equation*}
\{s\} \cup V = P'_0 \cup P'_1 \cup \cdots \cup P'_k,
\end{equation*}
where we have $P'_0 = \{s\}$ and, for $i>0$, 
\begin{equation*}
P'_{i} = \left\{ v_{\ell} ~:~   \sum_{j=0}^{i-1} p'_j \leq \ell < \sum_{j=0}^{i} p'_j\right\}.
\end{equation*}

\newcommand{\neatmap}{\pi_{\mathsf{U},\mathsf{D}}}
Let $\neatmap$ be the map that keeps only the occurrences of letters $U$ or $D$ in any word, 
i.e. it removes all occurrences of $H$ for Schr\"oder words.
Define a {\em compressed configuration $c'$} on $S_{n,0}=\{s\}\cup V = K_{n+1}$ as 
\begin{equation*}
c' := \mathsf{compress}_{K_{n+1}}(c) := \phi \circ \neatmap  \circ \phi^{-1}(c),
\end{equation*}
where $\phi^{-1}$ is applied to $\Split_{n,d}$ whereas $\phi$ is applied to $\Split_{n,0}$.
This compressed configuration provides a connection between the bouncing process for the complete split graph and for the complete graph.
The configuration $c'$ is equivalently defined by: $c'_i+1$ is the number of occurrences of letters $U$ after the $i$-th occurrence of letter $D$ 
when the path is read from {\bf{north-east to south-west}}. 
See Figure~\ref{fig:reformulate-bounce} for an example.

If $w=\mu \circ \phi^{-1}(c)$ is the Schr\"oder path of configuration $c$, then $w'$ is the Dyck word of $c'$ obtained by deletion of counter-diagonals crossed by diagonal steps.
Haglund's~\cite[Sec.1]{haglund-qtschroder} bounce path for the Schr\"oder path $w$ is obtained from the bounce path of the 
Dyck path $w'$ by inserting diagonal steps in the bounce path parallel to (and in the same counter-diagonal) those of $w$.

The bounce path of $w'$ has bounces of size $p'_0=1$, $p'_1$, $p'_2$, $\ldots$, $p'_k$. 
This is because the toppling of the sink ensures $p'_0=1$. 
Similarly, there are precisely $p'_1$ many $D$ steps from the north-east point $(n,n)$ of $w'$ going to the left before encountering a $U$ step, 
as these correspond the clique vertices that become unstable as a result of toppling the sink. 
The point at which it meets the Dyck path is
\begin{align*}
\PeakDyck(1) = (n+1-p'_0-p'_1,n+1-p'_0).
\end{align*}
This bounce path then goes to $(n+1-p'_0-p'_1,n+1-p'_0-p'_1)$, and then goes left until encountering the top of the next $U$ step. 
There will be $p'_2$ many left steps since the interplay between the number of $U$ steps preceding a $D$ step 
and the new level at which vertices are unstable as a result of the previous $p'_1$ topplings sets the toppling threshold to be $p'_1$ less than it was previously. 
The point at which it meets the Dyck path is
\begin{align*}
\PeakDyck(2) = (n+1-p'_0-p'_1-p'_2,n+1-p'_0-p'_1).
\end{align*}
Iterating the argument gives the peaks of the bounce path of $w'$ as
\begin{equation*}
\PeakDyck(i) = \left(n+1-\sum_{j=0}^i p'_j,n+1-\sum_{j=0}^{i-1}p'_j\right) = \left( \sum_{j=i+1}^k p'_j, \sum_{j=i}^k p'_j\right).
\end{equation*}
Observe that the $x$ (resp. $y$) coordinate of $\PeakDyck(i)$ counts the number of yet-to-be toppled clique vertices at the end (resp. start) of the $i$th loop.

This leads to the same partition $(P'_i)_{i\geq 0}$ of $D$ steps in the Dyck word $w'$ as in the Schr\"oder word $w$.
Any peak is an endpoint of a vertical ($U$) step of the Dyck word.
The insertion of the deleted counter-diagonals preserves this property in Haglund's definition of peaks for a Schr\"oder word.

It remains to count the number of counter-diagonals inserted below each $\PeakDyck(i)$ peak.
By the ITC-toppling process, the peak ending bounce $p_i$ admits $\sum_{j=i+1}^k q'_j$ such counter-diagonals. 
These correspond to the independent vertices toppled after the end of the $i$-th loop iteration. Hence
\begin{align*}
\PeakSchroder(i) =& \PeakDyck(i) +\left( \sum_{j=i+1}^k q'_j\right)(1,1)\\
 =& \left(\sum_{j=i+1}^k p'_j+\sum_{j=i+1}^k q'_j, \sum_{j=i}^k p'_j+\sum_{j=i+1}^k q'_j\right),
\end{align*}
and this coincides with $\Peakloop(i)$ by rewriting and making use of the facts $n+1=\sum_{j=0}^k p'_j$ and $d=\sum_{j=0}^k q'_j$.
From Haglund~\cite[Eqn.(10)]{haglund-qtschroder} we have 
$\schbounce (w) =   \mathsf{bounce}(w')+\sum_{\alpha} b(\alpha)$. 
The consideration above allows us to write 
$\mathsf{bounce}(w') = \sum_i (i-1)p'_i$ and $\sum_{\alpha} b(\alpha) = \sum_i (i-1) q'_i$.
This gives 
\begin{equation*}
\schbounce (w) = \sum_{j=1}^k (j-1)(p'_j+q'_j).
\end{equation*}
From the definition of the ITC-toppling process bounce,
\begin{equation*}
\topplebounce_{ITC}(c) = \sum_{j=1}^k j(q'_j+p'_j).
\end{equation*}
It follows that $\topplebounce_{ITC}(c) = \schbounce (w) + \sum_{j=1}^k (p'_j+q'_j) = \schbounce (w) + (n+d)$.
\end{proof}

\begin{proof}[Proof of Theorem~\ref{prop:itc-haglund}]
Combine Theorems~\ref{lem:itc-area} and \ref{lem:itc-bounce}.
\end{proof}

Egge, Haglund, Killpatrick and Kremer~\cite[Theorem 1]{ehkk} gave the following explicit sum for $q,t$-Schr\"oder polynomials:
\begin{align} 
\displaystyle \sum_{w \in \Schroder_{n,d}} q^{\area(w)}t^{\schbounce(w)} = \sum_{k=1}^{n}  
		&
		\sum_{\substack{(\alpha_1,\ldots,\alpha_k)\iscomp_k n \\ (\beta_0,\ldots,\beta_k) \isweakcomp_{k+1} d}}
			\binom{\beta_0+\alpha_1}{\beta_0}_q \binom{\beta_k+\alpha_k-1}{\beta_k}_q 
			q^{\binom{\alpha_1}{2}+\ldots+\binom{\alpha_k}{2}}\nonumber \\
		& t^{\beta_1+2\beta_2+\ldots+k\beta_k+\alpha_2+2\alpha_3+\ldots+(k-1)\alpha_k} 
			\prod_{i=1}^{k-1}
				\binom{\beta_i+\alpha_{i+1}+\alpha_i-1}{\beta_i,\alpha_{i+1},\alpha_i-1}_q. 
	\label{egge:equation}
\end{align}
The equality between the $q,t$-ITC polynomial and the $q,t$-Schr\"oder polynomial established in Corollary~\ref{corol:itc} ensures that the above explicit sum
equals $\notrepoly^{ITC}_{n,d}(q,t)$.

We present here an alternative sum (that is of course equal to the above sum) that uses the classification of ITC-toppling sequences. 
The form of the sum is slightly different to that of the above equation in that the pairs of sequences over which we sum are different. 
However, the number of pairs of sequences that contribute to each sum is the same and is
\begin{align*}
\sum_{k=1}^{n} \binom{d+k}{d} \binom{n-1}{k-1}
\end{align*}
These enumerations are proven in Lemmas~\ref{itc:sequence:enumeration} and \ref{ehkk:sequence:enumeration}.

The set $\mathsf{ITC}_{n,d}$ indexes a partition of the set $\Schroder_{n,d}$.
This partition gives rise to an explicit sum involving the Gaussian binomial coefficients for the $q,t$-ITC polynomial.
Define the partial order $\leq$ on Schr\"oder paths in $\Schroder_{n,d}$ by $w \leq w'$ if all lower triangles of $w$ are also lower triangles of $w'$.

\begin{prop}\label{prop:countingFqt}
Let $a_0:=1$.
For all $n\geq 1$ and $d\geq 1$,
\begin{equation}
\notrepoly^{ITC}_{n,d}(q,t) = 
\sum_{k=1}^{n+1}
\sum_{\substack{((b_1,\ldots,b_k),(a_1,\ldots,a_{k}))\\ \in \mathsf{ITC}_{n,d,k}}}
\prod_{i=1}^{k} q^{{a_i\choose 2}} 
			{ a_i+b_i+a_{i-1}-1\choose a_i,b_i, a_{i-1}-1}_q t^{(i-1)(a_i+b_i)}.
\label{derycke:equation}
\end{equation}
\end{prop}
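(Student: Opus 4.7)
The plan is to exploit Theorem~\ref{prop:itc-haglund} to translate from sorted recurrent configurations to Schr\"oder paths, and then to partition $\Schroder_{n,d}$ by bounce profile. By Corollary~\ref{corol:itc},
\begin{equation*}
\notrepoly_{n,d}^{ITC}(q,t) = \sum_{w \in \Schroder_{n,d}} q^{\area(w)}\, t^{\schbounce(w)}.
\end{equation*}
The bounce-peak computation in the proof of Theorem~\ref{lem:itc-bounce} shows that the bounce profile of $w = \mu \circ \phi^{-1}(c)$ is exactly $\bouncepath_{ITC}(c)$. I would therefore write $\Schroder_{n,d}$ as the disjoint union, over $1 \leq k \leq n+1$ and over elements of $\mathsf{ITC}_{n,d,k}$, of the class of paths with that prescribed bounce profile.

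On the class indexed by $((b_1,\ldots,b_k),(a_1,\ldots,a_k))$, the identity
\begin{equation*}
\schbounce(w) = \sum_{i=1}^k (i-1)(a_i+b_i),
\end{equation*}
established at the end of the proof of Theorem~\ref{lem:itc-bounce}, shows that $t^{\schbounce(w)}$ is constant on the class and supplies the $t$-factors in the claimed product. It then remains to compute the area generating function over each class, which I propose to do via a bounce-rectangle decomposition together with the partial order $\leq$ introduced above the proposition.

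Given a fixed bounce profile, I would decompose the region between $w$ and the bounce path into $k$ rectangles, where the $i$-th rectangle is bounded by the consecutive peaks $\PeakSchroder(i-1)$ and $\PeakSchroder(i)$ (with the convention $\PeakSchroder(0) := (n+d,n+d)$). Inside the $i$-th rectangle, $w$ contains $a_i$ vertical $U$-steps, $a_{i-1}-1$ horizontal $D$-steps (one of the $a_{i-1}$ $D$-steps from the $(i{-}1)$-st bounce is pinned at the corner), and $b_i$ diagonal $H$-steps whose anti-diagonals are those crossed between the two peaks. The class of paths with the given bounce profile is an interval in $\leq$ that factors as a product of intervals, one per rectangle, and each such sub-interval is a lattice of shuffles of the three prescribed step-types. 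The classical $q$-shuffling identity then yields the area generating function $\binom{a_i+b_i+a_{i-1}-1}{a_i,b_i,a_{i-1}-1}_q$ per rectangle, while the $\binom{a_i}{2}$ lower triangles forced by stacking the $a_i$ $U$-steps flush against the west wall of the rectangle supply the prefactor $q^{\binom{a_i}{2}}$. Multiplying across the $k$ rectangles yields the product in \eqref{derycke:equation}.

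The main obstacle is the precise bookkeeping inside each rectangle, particularly for the initial rectangle (where the convention $a_0=1$ reflects the single pinned $D$-step corresponding to the sink toppling) and the terminal rectangle (where $a_k=0$ is permitted by Theorem~\ref{lem:description-ITCnm}). A secondary subtlety is to verify that the $b_i$ $H$-steps of $w$ lie in the anti-diagonal band between peaks $i-1$ and $i$, which follows from the reformulation of the bounce via the map $C$ that deletes $H$-steps. Once these rectangles are shown to partition the area into independent contributions, exponentiation yields the claimed product.
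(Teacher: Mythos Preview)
Your proposal is correct and follows essentially the same approach as the paper: partition $\Schroder_{n,d}$ by ITC bounce profile, observe that each class is the interval between two extremal paths, and compute the area generating function per block via a $q$-multinomial shuffle of the three step types. The only notable difference is terminological: the paper calls the blocks \emph{hexagons} (with sides $a_{i-1}-1$, $b_i$, $a_i$) rather than rectangles, which is geometrically more accurate since three step types are being shuffled, and it packages the interval claim and the $q$-multinomial computation as two separate lemmas.
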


A proof of this proposition can be found in the appendix.

\begin{example}\label{compelling}
To derive $\notrepoly^{ITC}_{2,2}(q,t)$, all sequences in $\mathsf{ITC}_{2,2}$ appear in Example~\ref{itc22}.
For each pair $[b,a]$ we list the term contributing to the sum for $\notrepoly^{ITC}_{2,2}(q,t)$ in 
Table~\ref{helpfultable}.
\begin{table}[!h]
$$
\begin{array}{c|c||c|c}
	\begin{array}{c}
	\left[(b_1,\ldots,b_k),~(a_1,\ldots,a_k)\right]\\ ~\in~\mathsf{ITC}_{2,2} 
	\end{array}
	& \begin{array}{c} \mbox{contributing}\\ \mbox{term} \end{array}  &
	\begin{array}{c}
	\left[(b_1,\ldots,b_k),~(a_1,\ldots,a_k)\right]\\ ~\in~\mathsf{ITC}_{2,2} 
	\end{array}
	& \begin{array}{c} \mbox{contributing}\\ \mbox{term} \end{array}  \\
	\hline\hline 
\left[(2),~(2)\right] & q \binom{4}{2}_q &
	\left[(0,2),~(1,1)\right] & t^3 \binom{3}{1}_q  \\ 
\left[(2,0),~(1,1)\right] & t\binom{3}{1}_q &
	\left[(1,0,1),~(1,1,0)\right] & t^3 \binom{2}{1}_q  \\ 
\left[(1,1),~(2,0)\right] & qt \binom{2}{1}_q \binom{3}{1}_q &
	\left[(0,1,1),~(1,1,0)\right] & t^4 \binom{2}{1}_q \\ 
\left[(1,1),~(1,1)\right] & t^2 \binom{2}{1}^2_q  &
	\left[(0,0,2),~(1,1,0)\right] & t^5  \\
\left[(0,2),~(2,0)\right] & qt^2 \binom{3}{1}_q  &\multicolumn{2}{c}{}
\end{array}
$$
\caption{Terms contributing to $\notrepoly^{ITC}_{2,2}(q,t)$ discussed in Example~\ref{compelling}\label{helpfultable}}
\end{table}
\end{example}

\section{Sorted recurrent configurations as sawtooth polyominoes}\label{sectionfive}

In this section we present a planar characterization of the sorted recurrent configurations from Section~\ref{sectiontwo}.
This was done for the complete bipartite graph in \cite{dlb} and parallelogram polyominoes were shown to uniquely encode the sorted recurrent configurations of the ASM on that graph.
Define the unit steps $\ss=(0,-1)$, $\ww=(-1,0)$, $\nn=(0,1)$, $\ee=(1,0)$ and the two diagonal steps $\nw = (-1,1)$ and $\se=(1,-1)$.

\subsection{Sawtooth polyominoes and Schr\"oder paths}

\begin{definition}\label{sawdef}
Given positive integers $n$ and $d$, define 
$\wonkypoly_{n,d}$ to be the set of pairs of paths $(Upper,Lower)$ in the plane with the following properties:
\begin{itemize}
\item The upper path $Upper$ is a path from $(0,0)$ to $(n,d)$ that takes steps in the set $\{\nn,\se\}$.
\item The lower path $Lower$ is a path from $(0,0)$ to $(n,d)$ that takes steps in the set $\{\ee, \nn\}$.
\item The lower path and upper path only touch at the end-points $(0,0)$ and $(n,d)$.
\end{itemize}
We will refer to such a pair of paths, or the planar object they define, as a {\it{\wonky}} of dimension $(n,d)$.
\end{definition}

\begin{example}\label{firstexamplewonky}
Two \wonkys\ in $\wonkypoly_{4,5}$:
\begin{center}
\begin{tabular}{c@{\qquad\qquad\qquad}c}
\newpolyone & 
\newpolytwo\\
(a) & (b)
\end{tabular}
\end{center}
\end{example}

Let $\Words_{n,d}$ be the set of words consisting of $n$ $U$'s, $n$ $D$'s, and $d$ $H$'s. 
Consider the following construction on $\ClassOneParams{n}{d}$.

\begin{definition}\label{bijdef}
Let $w=w_1 w_2 \ldots w_{2n+d} \in \Words_{n,d}$. 
Form a collection of steps in the plane as follows:
\begin{description}
\item[The upper path] Connect $(n+1,d)$ to $(n,d+1)$ with a $\nw$ step. 
Read $w$ from left to right. For every $U$ letter encountered draw a $\nw$ step and for every non-$U$ letter encountered draw a $\ss$ step. 
At the end of this process draw a final $\ss$ step so as to touch the origin. 
Let $\Upper(w)$ be this path from $(n+1,d)$ to $(0,0)$.
\item[The lower path] Read $w$ from left to right. 
Starting from $(n+1,d)$, for every $H$ letter encountered in $w$ draw a $\ss$ step and for every $D$ letter encountered in $w$ draw a $\ww$ step. 
At the end of this process connect the final point $(1,0)$ to the origin.
Let $\Lower(w)$ be this path from $(n+1,d)$ to $(0,0)$.
\end{description}
Denote by $\sts_{n,d}(w)$ the pair $(\Upper(w),\Lower(w))$.
\end{definition}

\begin{example}\label{word:to:poly:example}
Consider $w=HUHD HUHDUDUHD \in \Schroder_{4,5}$ which corresponds to the configuration $c=(7,4,2,1;4,4,3,3,1) \in  \ClassOneParams{4}{5}$.
The upper and lower paths $\Upper(w)$ and $\Lower(w)$ are illustrated in Figure~\ref{yvan:new}
\begin{figure}
  \begin{center}
  \begin{tikzpicture}[scale=1.25]
    \draw (-0.2,-0.2) grid (5.2,6.2);
    \draw[rounded corners, line width=2, blue,->] (5,5) -- (4,6) -- (4,5) -- (3,6) -- (3,3) -- (2,4) -- (2,2) -- (1,3) -- (1,2) -- (0,3) -- (0,0); 
    \draw[rounded corners, line width=2, red, ->] (5,5) -- (5,3) -- (4,3) -- (4,1) -- (2,1) -- (2,0) -- (0,0);
    \foreach \x/\y/\letter in {4/5.5/$H_1$,3.5/5.5/$U_2$,3/5.5/$H_3$,3/4.5/$D_4$,3/3.5/$H_5$,2.5/3.5/$U_6$,2/3.5/$H_7$,2/2.5/$D_8$,1.5/2.5/$U_9$,1/2.5/$D_{10}$,0.5/2.5/$U_{11}$,0/2.5/$H_{12}$,0/1.5/$D_{13}$}{
      \fill[fill=white,draw=blue,line width=1] (\x,\y) circle (0.2);
      \draw[blue] node[scale=0.6] at (\x,\y) {\letter};
      }
    \foreach \x/\y/\letter in {5/4.5/$H_1$,5.2/4/$U_2$,5/3.5/$H_3$,4.5/3/$D_4$,4/2.5/$H_5$,4.2/2/$U_6$,4/1.5/$H_7$,3.5/1/$D_8$,3/0.8/$U_9$,2.5/1/$D_{10}$,1.9/1.1/$U_{11}$,2/0.5/$H_{12}$,1.5/0/$D_{13}$}{
      \fill[fill=white,draw=red,line width=1] (\x,\y) circle (0.2);
      \draw[red] node[scale=0.6] at (\x,\y) {\letter};
      }

  \end{tikzpicture}
  \caption{The sawtooth polyomino related to $w=H_1U_2H_3D_4H_5U_6H_7D_8U_9D_{10}U_{11}H_{12}D_{13}$
	that is discussed in Example~\ref{word:to:poly:example}
	\label{yvan:new} }
  \end{center}
\end{figure}
Note that this produces the same polyomino that appears in Figure~\ref{firstexamplewonky}(a).
\end{example}

\begin{theorem}\label{bijone}
A word $w \in \Schroder_{n,d}$ iff $\sts_{n,d}(w) \in \wonkypoly_{n+1,d}$.
\end{theorem}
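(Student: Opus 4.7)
The plan is to split the iff into two stages. First I would verify that $\sts_{n,d}(w)$ always produces two paths with the prescribed endpoints and step types (regardless of whether $w$ is a Schr\"oder word). Then I would show that the remaining requirement of $\wonkypoly_{n+1,d}$---that the two paths meet only at their shared endpoints $(n+1,d)$ and $(0,0)$---is equivalent to the Schr\"oder prefix condition on $w$.

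The first stage is a direct count: $\Upper(w)$ consists of $n+1$ $\nw$ steps (the initial one plus one per $U$-letter) and $n+d+1$ $\ss$ steps, giving displacement $(-n-1,-d)$ from $(n+1,d)$ to $(0,0)$; likewise $\Lower(w)$ consists of $d$ $\ss$ steps and $n+1$ $\ww$ steps, ending at $(0,0)$. Reversing traversal direction turns $\ss\mapsto\nn$, $\nw\mapsto\se$, and $\ww\mapsto\ee$, producing the step sets $\{\nn,\se\}$ and $\{\ee,\nn\}$ demanded by Definition~\ref{sawdef}.

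For the second stage I would parameterize both paths by the word index. Let $U_i$ and $L_i$ be the positions after reading the prefix $w_1\ldots w_i$ (with $U_0=(n,d+1)$ after the initial $\nw$ and $L_0=(n+1,d)$), and let $u_i$, $h_i$, $d_i$ count the respective letter types in that prefix. A short computation gives
\[
U_i = (n-u_i,\,d+1+2u_i-i) \quad\text{and}\quad L_i = (n+1-d_i,\,d-h_i),
\]
both lying on the common anti-diagonal $x+y = n+d+1-(h_i+d_i)$. The key observation is that only non-$U$ letters advance either path to a new anti-diagonal, while $U$-letters leave $\Lower(w)$ stationary and shift $\Upper(w)$ by one $\nw$ step. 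Hence the two paths jointly traverse a common sequence of anti-diagonals indexed by $m = h_i + d_i$. On anti-diagonal $x+y = n+d+1-m$, the lower path occupies the single point $L_{i_m}$ (where $i_m$ is the index of the $m$-th non-$U$ letter, with the convention $i_0 = 0$), while the upper path sweeps through the points
\[
L_{i_m} + \bigl(u_{i_m}+1-d_{i_m}+j\bigr)(-1,1), \qquad j = 0, 1, \ldots, i_{m+1}-1-i_m.
\]
Thus the two paths meet on this anti-diagonal iff the integer $0$ lies in the above range of offsets, equivalently iff $d_{i_m} \geq u_{i_m} + 1$---precisely the failure of the Schr\"oder prefix condition at index $i_m$.

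The two directions of the iff now follow. If $w \in \Schroder_{n,d}$ then $u_{i_m}+1-d_{i_m} \geq 1$ on every anti-diagonal, so $\Upper(w)$ sits strictly above-left of $\Lower(w)$ at every interior point, and the paths coincide only at the shared endpoints (which are automatically on anti-diagonals $n+d+1$ and $0$). Conversely, if $w \notin \Schroder_{n,d}$ then the minimal violating index $i$ is forced to carry a $D$ (reading $U$ or $H$ preserves $d_i \leq u_i$) and satisfies $d_i = u_i + 1$; writing this non-$U$ position as $i = i_m$, the offset range above starts at exactly $0$, so $U_{i_m} = L_{i_m}$ gives an interior touching point. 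The main hurdle is recognizing the anti-diagonal slicing as the right framework: it reduces a geometric non-touching condition on two differently-structured paths to a single one-parameter integer inequality per anti-diagonal.
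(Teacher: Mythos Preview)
Your proof is correct and follows essentially the same approach as the paper: both arguments compute the positions of $\Upper(w)$ and $\Lower(w)$ after reading a prefix of $w$ in terms of the letter counts $u_i,h_i,d_i$, and identify an interior coincidence with the failure $d_i>u_i$ of the Schr\"oder condition. Your explicit anti-diagonal slicing (noting that $U_i$ and $L_i$ always lie on $x+y=n+d+1-(h_i+d_i)$ and differ by the multiple $(u_i+1-d_i)(-1,1)$) is a slightly cleaner packaging of the same computation the paper carries out by assuming a meeting point and back-solving for the step counts.
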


\begin{proof}
Let $w \in \Schroder_{n,d}$. 
By the construction in Definition~\ref{bijdef}, the paths $\Upper(w)$ and $\Lower(w)$ are both paths from $(n+1,d)$ to $(0,0)$ whose step types correspond to those in Definition~\ref{sawdef}.
We will prove:
\begin{enumerate}
\item[(i)] If $w\in \Schroder_{n,d}$ then $\sts_{n,d}(w) \in \wonkypoly_{n+1,d}$.
\item[(ii)] If $w\in \Words_{n,d} \backslash \Schroder_{n,d}$ then $\sts_{n,d}(w) \not\in \wonkypoly_{n+1,d}$.
\end{enumerate}
For (i), suppose that $w\in \Schroder_{n,d}$. 
Then $\sts_{n,d}(w)=(\Upper(w),\Lower(w))$ with 
\begin{equation*}
\Upper(w) = (a_1,\ldots,a_{2n+2+d}) \mbox{ and }\Lower(w) = (b_1,b_2,\ldots,b_{n+1+d}).
\end{equation*}
Note that $a_i \in \{\nw,\ss\}$ and $b_j \in \{\ww,\ss\}$.
Let us assume that the upper and lower paths meet at the point one reaches by starting at $(n,d)$ and following steps $ (a_1,\ldots,a_{p+2q})$ and $(b_1,\ldots,b_{p+q})$ where 
the upper path contains $q$ $\nw$ steps and $p+q$ $\ss$ steps and the lower path contains $p$ $\ss$ steps and $q$ $\ww$ steps. 
The point at which these two partial paths meet will be $(n+1-q,d-p)$.
By Definition~\ref{bijdef}, $a_1=\nw$ and for $i \in [1,p+2q-1]$, $a_{1+i} = \nw$ iff $w_i=U$ and $a_{1+i} = \ss$ iff $w_i = H$ or $D$.
Similarly, $b_j=\ss$ iff the $j$th non-$U$ letter of $w$ is $H$ and $b_j=\ww$ iff the $j$th non-$U$ letter of $w$ is $H$. 
These facts show that for $(a_2,\ldots,a_{p+2q}) = (w_1,\ldots,w_{p+2q-1})$ there are precisely $p$ $H$ steps, $q$ $D$ steps, and $(q-1)$ $U$ steps. 
The number of $D$'s in this prefix of a Schr\"oder word is one more than the number of $U$'s, thereby contradicting the definition of a Schr\"oder word. 
Hence these two paths cannot meet and $\sts_{n,d}(w) \in \wonkypoly_{n+1,d}$.

For (ii), let $w\in \Words_{n,d} \backslash \Schroder_{n,d}$. 
This means $w$ is a word consisting of $n$ $U$'s, $n$ $D$'s, and $d$ $H$'s that violates the Schr\"oder path property. 
In other words, there exists an index $k$ such that the prefix $w_1w_2\cdots w_k$ contains more $D$'s than $U$'s. 
Let $k$ be the smallest index that satisfies this property
so that there are $t$ $U$'s, $t+1$ $D$'s, and $(k-2t-1)$ $H$'s in the prefix.
Consider now $\Upper(w_1w_2\cdots w_k) = (a_1,\ldots,a_{k+1})$ and $\Lower(w_1w_2\cdots w_k)=(b_1,b_2,\ldots,b_{\ell})$.
The upper path will end at position $(n,d+1) + t(-1,1) + (k-2t-1+t+1) (0,-1) = (n-t,d+1+t-(k-t)) = (n-t,d+1+2t-k)$.
The lower path will end at position $(n+1,d)+(t+1)(-1,0) + (k-2t-1)(0,-1) = (n-t,d+1+2t-k)$, and so the upper and lower paths touch. 
This implies $\sts_{n,d}(w) \not\in \wonkypoly_{n+1,d}$.
\end{proof}

\subsection{Mapping sorted recurrent configurations to sawtooth polyominoes}
The composition of the bijections relating sorted recurrent configurations to Schr\"oder words and Schr\"oder words 
to sawtooth polyominoes allows us to give a direct mapping from sorted recurrent configurations to sawtooth polyominoes 
that we now state.

\begin{definition}\label{rec2poly}
Let $c=(a_1,\ldots,a_{n};b_1,\ldots,b_d)\in \ClassOneParams{n}{d}$.
\begin{enumerate}
\item[(i)] For each $i \in [1,d]$, draw a vertical line segment from $(1+b_{d+1-i},i-1)$ to $(1+b_{d+1-i},i)$.
\item[(ii)]  Connect the endpoints of the vertical line segments in the previous step if they have the same $x$-coordinate. 
In the case of an endpoint with $y$-coordinate 0, connect it to the origin. 
In the case of an endpoint with $y$-coordinate $d$, connect it to the point $(n+1,d)$.
\item[(iii)] For each $j \in [1,n]$, draw a diagonal line segment from $(j-1,3-j+a_{n+1-j})$ to $(j,2-j+a_{n+1-j})$.
Draw a diagonal line segment from $(n,d+1)$ to $(n+1,d)$.
\item[(iv)]  Connect the endpoints of the diagonal line segments in the previous step if they have the same $x$-coordinate. 
In the case of an endpoint with $x$-coordinate 0, connect it to the origin. 
\end{enumerate}
Let us denote by $Low$ the path that results from steps (i) and (ii), and by $Upp$ the path that results from steps (iii) and (iv).
Denote by $\fcp_{n,d}(c)$ the pair $(Upp,Low)$.
\end{definition}

\begin{example}
Consider $c=(7,4,2,1;4,4,3,3,1) \in \ClassOneParams{4}{5}$.
In this example $n=4$ and $d=5$.
We first draw the vertical line segments $(1+1,0) \to (1+1,1)$, $(1+3,1) \to (1+3,2)$, $(1+3,2) \to (1+3,3)$, $(1+4,3) \to (1+4,4)$, and $(1+4,4) \to (1+4,5)$.
Then connect those vertical line segments that have endpoints with the same $y$-coordinate. 
Connect $(1+1,0)$ to the origin $(0,0)$ and connect  $(1+4,4)$ to $(5,5)$. 
Secondly, draw the diagonal segments $(0,2+1) \to (1,1+1)$, $(1,1+2) \to (2,0+2)$, $(2,0+4) \to (3,-1+4)$, $(3,-1+7) \to (4,-2+7)$. Also connect $(n=4,d+1=6) \to (n+1=5,d=5)$.
Then connect those diagonal segments whose endpoints have the same $x$-coordinate. 
Connect $(0,2+1)$ to the origin.
The upper and lower paths $\Upper(w)$ and $\Lower(w)$ are illustrated in Example~\ref{firstexamplewonky}(a).
\end{example}

\subsection{CTI topplings and bounce paths within sawtooth polyominoes}
The polyomino representation of a sorted recurrent configuration allows us to visualize the CTI toppling process in a rather compact way. 
This is similar to the bounce path that featured in the paper on parallelogram polyominoes~\cite{dlb}.

\begin{definition}
Given $P \in  \wonkypoly_{n+1,d}$, let the {\it{CTI-bounce path of $P$}} be the path from $(n,d)$ to $(0,0)$ that takes steps in $\{\ss,\nw\}$ and is defined as follows.
Start at $(n,d)$.
\begin{enumerate}
\item[(i)] If we are currently on the upper path then go to step (ii). Otherwise move in direction $\nw$ until meeting the upper path.
\item[(ii)] If the point is already on the lower path then go to step (iii). Otherwise move in the direction $\ss$ until meeting the lower path. 
\item[(iii)] If the current position is not $(0,0)$, then go to step (i).
\end{enumerate}
The outcome will be a sequence of $p_1$ $\nw$ steps, followed by $p_1+q_1$ $\ss$ steps, 
followed by $p_2$ $\nw$ steps, followed by $p_2+q_2$ $\ss$ steps, $\ldots$,
followed by $p_k$ $\nw$ steps, followed by $p_k+q_k$ $\ss$ steps.
Define 
\begin{equation*}
\ctiBounce(P):=(p_1,q_1,\ldots,p_k,q_k) \qquad \mbox{ and }\qquad \ctibounce(P) := \sum_{i=1}^k i(p_i+q_i) .
\end{equation*}
\end{definition}

\begin{example}\label{ert3}
The bounce paths of the two \wonkys\ given in Example~\ref{firstexamplewonky} are illustrated in Figure~\ref{hjkl3}.
Note that in (a) the bounce math initially took 0 $\nw$ steps since it was already at a point on the upper path.
\begin{figure}
\begin{center}
\begin{tabular}{c@{\qquad}c}
\newpolyonebouncealt & 
\newpolytwobouncealt\\
(a) $\ctiBounce(\cdot) = (0,2,1,2, 1,0, 1,1, 1,0)$ &
(b) $\ctiBounce(\cdot) = (1,3, 1,1, 2,1)$ \\
\end{tabular}
\end{center}
\caption{The CTI bounce paths for the sawtooth polyominoes mention in Example~\ref{ert3}\label{hjkl3}}
\end{figure}
\end{example}

\begin{definition}
Given $P \in  \wonkypoly_{n+1,d}$, define the area $\area(P)$ of $P$ to be the number of unit squares whose vertices are lattice points that are contained within $P$. 
\end{definition}

The area of the \wonky\ in Example~\ref{firstexamplewonky}(a) is 12 while
the area of the \wonky\ in Example~\ref{firstexamplewonky}(b) is 15.

\begin{theorem}\label{euroone}
Let $c \in \DecRec(\Split_{n,d})$ and $P=\fcp_{n,d}(c) \in \wonkypoly_{n+1,d}$.
Then $\height(c) = \area(P)-(n+d) +\binom{n+d}{2}-\binom{d}{2}$ and 
$\bouncepath_{CTI}(c) = \ctiBounce(P)$. 
\end{theorem}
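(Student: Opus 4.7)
The theorem splits into two assertions: the area formula and the CTI-bounce equality. A preliminary observation simplifies both. By comparing Definitions~\ref{bijdef} and \ref{rec2poly}, one checks that $\fcp_{n,d}(c) = \sts_{n,d}(\phi^{-1}(c))$, since both constructions encode the data $(a_j)$ and $(b_i)$ of $c$ into the same upper and lower paths. Setting $w := \phi^{-1}(c) \in \Schroder_{n,d}$, the polyomino $P$ is therefore the sawtooth polyomino associated to this Schr\"oder word.

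For the area identity, I invoke Theorem~\ref{lem:itc-area}: $\area(\mu(w)) = \level(c)$. The mirror map $\mu$ (reverse and swap $U \leftrightarrow D$) corresponds geometrically to the anti-diagonal reflection of the Schr\"oder diagram about the centre of its bounding box; this is an area-preserving involution that fixes the diagonal $y=x$ setwise and maps lower triangles above the diagonal to lower triangles above the diagonal, so $\area(w) = \area(\mu(w)) = \level(c) = \height(c) - \binom{n+d}{2} + \binom{d}{2}$. It therefore suffices to establish the geometric identity $\area(P) = \area(w) + (n+d)$, which I plan to verify by a direct row-by-row count of unit squares contained in $P$, using the explicit coordinates of the upper path's $\se$-steps $(j{-}1, 3{-}j{+}a_{n+1-j}) \to (j, 2{-}j{+}a_{n+1-j})$ and the lower path's verticals at $x = 1{+}b_{d+1-i}$ given by Definition~\ref{rec2poly}; the extra $n+d$ cells are to be identified as a distinguished diagonal strip of $P$ that has no analogue in the Schr\"oder-area count.

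For the bounce equality I induct on the iteration index $i$ of the CTI process. The inductive hypothesis is that, after $i$ full iterations of the bounce, the cursor sits at the lattice point $(n - P_i,\, d - Q_i)$ on the lower path, where $P_i := \sum_{j \leq i} p_j$ and $Q_i := \sum_{j \leq i} q_j$; this records that $P_i$ clique and $Q_i$ independent vertices have been toppled in the first $i$ CTI iterations. The base case $i=0$ is the starting cursor $(n,d)$, matching the post-sink-toppling state. For the inductive step I must show that the next $\nw$-run halts after exactly $p_{i+1}$ steps and that the subsequent $\ss$-run halts after exactly $p_{i+1}+q_{i+1}$ steps.

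The crucial geometric-arithmetic dictionary is that moving $\nw$ by $k$ steps from the current cursor lands on the upper path precisely when $a_{P_i+k+1} \leq n+d-2-P_i-Q_i \leq a_{P_i+k}$, as computed from the $y$-range of the $\nn$ stretch at $x = n-P_i-k$. Combined with the sortedness of $c$, this is to be matched with the CTI condition $a_{P_i+k} \geq n+d-1-P_i-Q_i > a_{P_i+k+1}$ characterising that exactly $p_{i+1} = k$ clique vertices become unstable after the first $i$ iterations; the off-by-one between these two pairs of inequalities is closed by the validity of $P$ as a sawtooth polyomino (upper and lower paths touching only at endpoints), which is exactly the recurrency of $c$ via the bijection with $\Schroder_{n,d}$. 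An analogous argument using the lower path's verticals at $x = 1+b_{d+1-i}$ handles the subsequent $\ss$-run and identifies the $q_{i+1}$ newly-unstable independent vertices. The main obstacle will be the careful bookkeeping between the reverse-indexed polyomino coordinates ($a_{n+1-j}, b_{d+1-i}$) and the largest-first CTI ordering, together with the argument that non-touching excludes the boundary equality case $a_{P_i+k} = n+d-2-P_i-Q_i$, which would otherwise permit the bounce to halt one step too soon.
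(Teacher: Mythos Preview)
Your approach to both parts is essentially correct and closely parallels the paper's own argument.

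For the area identity, the paper takes the more direct route: it simply computes the area under the upper path and under the lower path from the explicit coordinates in Definition~\ref{rec2poly}, subtracts, and simplifies via $\binom{n}{2}+dn=\binom{n+d}{2}-\binom{d}{2}$. Your detour through Theorem~\ref{lem:itc-area} and the mirror-invariance of the Schr\"oder area is valid, but it still leaves you needing to establish $\area(P)=\area(w)+(n+d)$ by a direct coordinate count---which is essentially the whole computation the paper carries out anyway. The indirection buys nothing; you may as well compute $\area(P)$ from Definition~\ref{rec2poly} outright.

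For the bounce equality, your inductive formulation with cursor $(n-P_i,d-Q_i)$ is exactly the paper's iterative argument made explicit, and your geometric--arithmetic dictionary is correct. One place where you over-complicate: the off-by-one between your ``on the upper path'' condition
\[
a_{P_i+k+1}\le n+d-2-P_i-Q_i\le a_{P_i+k}
\]
and the instability condition $a_{P_i+k}\ge n+d-1-P_i-Q_i>a_{P_i+k+1}$ closes by sortedness alone, without invoking non-touching. Indeed, if the $\nw$-run halted at some $k'<p_{i+1}$, then $a_{P_i+k'+1}\le n+d-2-P_i-Q_i$; but $k'+1\le p_{i+1}$ together with sortedness gives $a_{P_i+k'+1}\ge a_{P_i+p_{i+1}}\ge n+d-1-P_i-Q_i$, a contradiction. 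Conversely, at $k=p_{i+1}$ both inequalities are satisfied strictly on the upper side. The non-touching property of $P$ is what guarantees the bounce path is globally well-defined and terminates at the origin, not what resolves the local inequality you flag.
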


\begin{proof}
Suppose $c=(a_1,\ldots,a_n;b_1,\ldots,b_d) \in \DecRec(\Split_{n,d})$ and $P=(Upp,Low)=\fcp_{n,d}(c) \in \wonkypoly_{n+1,d}.$
From the construction for $\fcp$, we find that the total area of the unit squares contained 
beneath the lower path $Low$ of $P$ and the $x$-axis is 
\begin{equation*}
d(n+1-1) - (b_1+b_2+\ldots+b_d).
\end{equation*}
The total area beneath the upper path $Upp$ of $P$ and the $x$-axis is 
\begin{equation*}
d+n - \binom{n}{2} + (a_1+\ldots + a_{n}).
\end{equation*}
The area of $P$ is the difference of these:
\begin{equation*}
\area(P) = d+n  - \binom{n}{2} + (a_1+\ldots + a_{n}) - dn + (b_1+b_2+\ldots+b_{d}).
\end{equation*}
This may be rewritten 
\begin{align*}
\height(c) &= \area(P) - (d+n) +\binom{n}{2} +dn \\
&=  \area(P) - (d+n) +\binom{d+n}{2} - \binom{d}{2}.
\end{align*}

Let $c \in \DecRec(\Split_{n,d})$ and $P=\fcp_{n,d}(c) \in \wonkypoly_{n+1,d}$.
Consider $\bouncepath_{CTI}(c)=(p_1,q_1,\ldots)$. 
In terms of the associated sawtooth polyomino, the effect of toppling the sink corresponds to all diagonal steps 
(other than the last between $(n,d+1)$ and $(n+1,d)$ as it does not represent a height) 
on the upper path being shifted up by 1 unit.
Those that are unstable will be those that are then on (or above) the line $x+y=n+1+d$. 

Equivalently, instead of shifting each of the diagonal steps by 1, to see which will become unstable as a result of toppling the sink we can 
start at the point $(n,d)$ and see which diagonal steps to its left are on or above the line $x+y=n+1+d-1=n+d$. Suppose there are $p_1$ such steps. 
These $p_1$ steps can be seen by following a line from $(n,d)$ to $(n-p_1,d+p_1)$ which is where the line $x+y=n+d$ meets the upper path of the sawtooth polyomino. 
As $p_1$ clique vertices are toppled, this means the heights of all independent vertices will now be $1+p_1$ more than before the sink was toppled. 

Consequently, the set of those that are next toppled are those independent vertices $w_i$ whose height is now $b_i+1+p_1 \geq n+1$. This is the same as those independent vertices 
whose initial height (before toppling the sink) was $b_i \geq n-p_1$, and these independent vertices are precisely those that are strictly to the right of the line
that moved vertically down from $(n-p_1,d+p_1)$ until meeting the lower path of the sawtooth polyomino.

Repeating this argument reveals precisely why the bounce path of the polyomino models the CTI toppling process on the corresponding recurrent configuration. 
Hence $\bouncepath_{CTI}(c)=(p_1,q_1,\ldots) = \ctiBounce(P)$.
\end{proof}

\begin{example}\label{secondexamplewonky}
The \wonky\ diagrams that correspond to the 30 sorted recurrent configuration given in 
Example~\ref{examplelala} are illustrated in Figure~\ref{secfig}. 
Above each diagram we list the sorted configuration to which it corresponds in parentheses, 
and below the parentheses we list the bounce $(p_1,q_1,\ldots)$ in angle brackets.
\end{example}

\begin{figure}[!h]
\begin{center}
\setlength\tabcolsep{1.5pt} 
\begin{tabular}{|c|c|c|c|c|c|c|c|} \hline
\hreeone& 
\hreetwo &
\hreethree &
\hreefour &
\hreefive &
\hreesix &
\hreeseven & 
\hreeeight 
\\ \hline
\hreenine &
\hreeten &
\hreeeleven &
\hreetwelve &
\hreethirteen &
\hreefourteen &
\hreefifteen &
\hreesixteen \\ \hline
\hreeseventeen &
\hreeeighteen &
\hreenineteen &
\hreetwenty &
\hreetwentyone &
\hreetwentytwo &
\hreetwentythree &
\hreetwentyfour \\ \hline
\multicolumn{1}{c|}{}&
\hreetwentyfive &
\hreetwentysix &
\hreetwentyseven &
\hreetwentyeight &
\hreetwentynine &
\hreethirty  \\ \cline{2-7}
\end{tabular}
\end{center}
\caption{The sawtooth polyominoes for Example~\ref{secondexamplewonky}. 
For each, we list the recurrent configuration $c=(a_1,a_2;b_1,b_2) \in \DecRec(S_{2,2})$, 
followed by $\bouncepath_{CTI}(c)$ that is presented using angle brackets}
\label{secfig}
\end{figure}

\subsection{ITC topplings and bounce paths within sawtooth polyominoes}

Given the manner in which the bounce path of the previous section emulates the temporal changes 
to the configuration heights as a result of toppling the sink, it is perhaps unsurprising that 
ITC topplings also admit a very similar polyomino bounce path description.
For the case of ITC topplings, the only difference is that the bounce path initially moves south 
(if it can) rather than moving north-west from position $(n,d)$.

\begin{definition}
Given $P \in  \wonkypoly_{n+1,d}$, let the {\it{ITC-bounce path of $P$}} be the path from $(n,d)$ to $(0,0)$ that takes steps in $\{\ss,\nw\}$ and is defined as follows.
Start at $(n,d)$.
\begin{enumerate}
\item[(i)] If we are currently on the lower path then go to step (ii). Otherwise move in direction $\ss$ until meeting the lower path.
\item[(ii)] If the point is already on the upper path then go to step (iii). Otherwise move in the direction $\nw$ until meeting the upper path. 
\item[(iii)] If the current position is not $(0,0)$, then go to step (i).
\end{enumerate}
The outcome will be a sequence of $q'_1$ $\ss$ steps, followed by $p'_1$ $\nw$ steps, 
followed by $p'_1+q'_2$ $\ss$ steps, followed by $p'_2$ $\nw$ steps, $\ldots$,
followed by $p'_{k-1}$ $\nw$ steps, followed by $p'_{k-1}+q'_k$ $\ss$ steps.
For consistency we define $p'_k:=0$.
Define 
\begin{equation*}
\itcBounce(P):=(q'_1,p'_1,\ldots,q'_k,p'_k) \qquad \mbox{ and }\qquad \itcbounce(P) := \sum_{i=1}^k i(q'_i+p'_i) .
\end{equation*}
\end{definition}

\begin{example}\label{asdfg2}
The ITC-bounce paths of the two \wonkys\ given in Example~\ref{firstexamplewonky} are illustrated in Figure~\ref{hjk2}.
\begin{figure}
\begin{center}
\begin{tabular}{c@{\qquad}c}
\newpolyonebouncealttwo & 
\newpolytwobouncealttwo\\
(a) $\itcBounce(\cdot) = (2,1, 2,1, 0,1, 1,1)$ &
(b) $\itcBounce(\cdot) = (2,1, 1,1, 1,2, 1,0)$ \\
\end{tabular}
\end{center}
\caption{ITC bounce paths mentioned in Example~\ref{asdfg2}\label{hjk2}}
\end{figure}
\end{example}

\begin{theorem}
Let $c \in \DecRec(\Split_{n,d})$ and let $P=\fcp_{n,d}(c) \in \wonkypoly_{n+1,d}$.
It follows that 
$\bouncepath_{ITC}(c) = \itcBounce(P)$
and 
$\topplebounce_{ITC}(c) := \itcbounce(P)$.
\end{theorem}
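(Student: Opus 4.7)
The plan is to adapt the proof of Theorem~\ref{euroone} to the ITC setting. The structural difference between CTI- and ITC-bouncing in the polyomino is that the ITC bounce begins with a south move (to identify the first batch of unstable independent vertices) rather than a northwest move; otherwise the pattern of alternating southward descents (modelling parallel independent topplings) and northwest climbs (modelling parallel clique topplings) is the same. I will mirror Theorem~\ref{euroone} step by step, swapping the roles played there by the upper-path $\se$-steps and the lower-path vertical segments.

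First I would analyse the initial south run. After the sink topples, each non-sink vertex gains one grain, so $w_i$ is unstable iff $b_i \geq n$. By Definition~\ref{rec2poly} the vertical segment of the lower path in the $i$-th row from the top lies at $x=1+b_i$, which is at least $n+1$ precisely when $b_i \geq n$. Monotonicity $b_1 \geq \cdots \geq b_d$ then ensures that the first $q'_1$ rows from the top have their vertical segments strictly to the right of column $n$, while row $q'_1+1$ does not. Moving south from $(n,d)$ we therefore traverse exactly $q'_1$ interior cells before the lower path cuts through column $n$, landing at $(n,d-q'_1)$.

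Second I would analyse the first northwest run. Parallel-toppling $w_1,\ldots,w_{q'_1}$ adds $q'_1$ grains to every clique vertex, so $v_i$ is now unstable iff $a_i+1+q'_1 \geq n+d$, equivalently iff its $\se$-step on the upper path, which lies on the anti-diagonal $x+y=a_i+2$, sits strictly above the anti-diagonal $x+y = n+d-q'_1$ through $(n, d-q'_1)$. Since a $\nw$ step preserves $x+y$, the bounce travels along this anti-diagonal, staying strictly below the $\se$-steps for $v_1,\ldots,v_{p'_1}$ and hitting the upper path at the endpoint of the $\se$-step for $v_{p'_1+1}$ (equivalently, the bottom of the $\nn$-run at column $n-p'_1$). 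This requires exactly $p'_1$ northwest steps and brings the bounce to $(n-p'_1,\, d-q'_1+p'_1)$.

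I would then iterate by induction on the loop index, carrying the invariant that after $i$ loops the bounce sits at $(n-P_i,\, d-Q_i+p'_i)$ on the upper path, where $P_i := \sum_{j \leq i} p'_j$ and $Q_i := \sum_{j \leq i} q'_j$, and that the mid-process configuration equals the output of $i$ ITC-toppling iterations. In the inductive step the next south run splits as $p'_i$ \emph{recovery} steps, bringing us from $y=d-Q_i+p'_i$ back down to the previous lower-path level $y=d-Q_i$, followed by $q'_{i+1}$ \emph{fresh} south steps identifying the newly-unstable independents via the argument of Step~1; the subsequent northwest run has length $p'_{i+1}$ by the anti-diagonal argument of Step~2. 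The main obstacle will be verifying that the $p'_i$ recovery steps stay strictly inside the polyomino (do not prematurely hit the lower path), which reduces to checking that the lower-path vertical segments in rows below the current level appear only at $x$-values encoding independent vertices not yet made unstable. Termination at $(0,0)$ follows from $\sum p'_j = n$ and $\sum q'_j = d$, which are consequences of Dhar's burning algorithm applied to the recurrent configuration. The second identity $\topplebounce_{ITC}(c)=\itcbounce(P)$ is then immediate from the first, since both sides equal $\sum_{i} i(q'_i+p'_i)$.
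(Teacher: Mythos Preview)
Your proposal is correct and follows essentially the same approach as the paper's own proof: both adapt the argument of Theorem~\ref{euroone} by reversing the order of the initial move (south before northwest), then argue that the south runs identify the currently unstable independent vertices via the lower path, while the northwest runs identify the currently unstable clique vertices via the anti-diagonal levels of the upper path, and finally appeal to iteration. The paper's proof is in fact shorter and more informal than yours---it handles only the first south run and the first northwest run explicitly and then writes ``Repeating this argument reveals precisely why the ITC bounce path of the polyomino models the ITC toppling process,'' without setting up an explicit induction or naming the recovery-step issue you flag.

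The obstacle you identify (that the $p'_i$ recovery south steps must not prematurely meet the lower path) is glossed over in the paper but resolves immediately: since the lower path uses only $\ww$ and $\ss$ steps when traced from $(n+1,d)$ to the origin, its height is weakly decreasing as $x$ decreases, so at column $n-P_i$ the lower path sits no higher than it did at column $n-P_{i-1}$, namely at height $d-Q_i$. Hence the bounce cannot meet the lower path during the recovery portion of the south run. With that observation your inductive step goes through exactly as you outline.
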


\begin{proof}
Let $c=(a_1,\ldots,a_n;b_1,\ldots,b_d)\in \DecRec(\Split_{n,d})$ and $P=\fcp_{n,d}(c) \in \wonkypoly_{n+1,d}$.
Consider $\bouncepath_{ITC}(c)=(q'_1,p'_1,\ldots)$. 
In terms of the associated sawtooth polyomino, the effect of toppling the sink corresponds to all vertical steps 
on the lower path being shifted right by 1 unit.
Those that are unstable will be those that are then on (or to the right of) the line $x=n+1$.

Equivalently, instead of shifting each of the vertical steps to the right by 1, to see which will become unstable as a result of toppling the sink we can 
start at the point $(n,d)$ and see which vertical steps to its right are on or to the right of the line $x=n$. Suppose there are $q'_1$ such steps. 
These $q'_1$ steps can be seen by following a line from $(n,d)$ to $(n,d-q'_1)$ which is where the line $x=n$ meets the lower path of the sawtooth polyomino. 
As $q'_1$ independent vertices have just been toppled, this means the heights of all clique vertices will now be $1+q'_1$ more than before the sink was toppled. 

Consequently, the set of those that are next toppled are those clique vertices $v_i$ whose height is now $a_i+1+q'_1 \geq n+d$. 
This is the same as those clique vertices 
whose initial height (before toppling the sink) was $a_i \geq n+d-q'_1-1$, and these clique vertices are precisely those whose horizontal steps are on, or above, the line
of slope $-1$ that passes through the point $(n,d-q'_1)$ until meeting the upper path of the sawtooth polyomino.

Repeating this argument reveals precisely why the ITC bounce path of the polyomino models the ITC toppling process on the corresponding recurrent configuration. 
Hence $\bouncepath_{ITC}(c)=(q'_1,p'_1,\ldots) = \itcBounce(P)$. From this it follows that $\topplebounce_{ITC}(c) := \itcbounce(P)$.
\end{proof}

\section{A cycle lemma to count sorted recurrent configurations on split graphs}\label{sectionsix}

Our aim in this section is to derive a cycle lemma (Theorem~\ref{ourcyclelemma})  for configurations of the ASM on the 
split graph that will allow us to count the number of sorted recurrent configurations.
Recall that in the introduction we outlined the notion of a cycle lemma and the general framework to which it applies.
While one of the authors has already counted these configurations, via the previously described bijection 
with Schr\"oder paths~\cite{ncf}, our secondary aim is to provide a cycle lemma that remains within the 
framework of the sandpile model rather than for some graphical representation of the configurations. 

Since such a cycle lemma has already been shown to exist for complete bipartite graphs~\cite{aadl}, 
we suspect that such results are instances of a more general result that holds true for the sandpile model 
on similar graph classes, such as the clique-independent graphs of \cite{ddillw} which have the complete bipartite and complete split graphs as special cases. 
At the start of this paper, we defined configurations on the complete split graph to be an assignment 
of non-negative integers to the (non-sink) vertices.  
In this section, we will allow configurations, unless otherwise stated, to have a non-negative number of grains.

In this section we will decompose a configuration $u=\left(u_s;u^{[K]};u^{[I]}\right)$ on the complete 
split graph $S_{n,d}$ into three sub-configurations on three components: 

\begin{itemize}
\item the isolated sink $u_s$, 
\item the sub-configuration $u^{[K]}=\left(u^{[K]}_0,\ldots, u^{[K]}_{n-1}\right)$ on the $n$ clique ($K=V$) vertices, and
\item the sub-configuration $u^{[I]}=\left(u^{[I]}_0,\ldots, u^{[I]}_{d-1}\right)$ on the $d$ independent ($I=W$) vertices.
\end{itemize}
We adopt the convention that the number of grains on the sink is fixed so that
\[ u_s ~:=~ -\left( \sum_{k\in K} u^{[K]}_k + \sum_{i\in I} u^{[I]}_i \right).\]
This allows us to remove explicit reference to the sink part of a configuration and to write 
$u=\left(u^{[K]};u^{[I]}\right)$ instead of $u=\left(u_s;u^{[K]};u^{[I]}\right)$.

We now introduce the set of sorted non-negative quasi-stable configurations.
The reason for doing so is that this set is easier to count and we will later show it can be partitioned into sets, 
each of cardinality $n+1$, that are indexed by sorted recurrent configurations.

We will say a clique (resp. independent) sub-configuration $u^{[K]}$ (resp. $u^{[I]}$) is \emph{quasi-stable} 
if $\max_{k \in K}  u^{[K]}_k \leq n+d$ (resp. $\max_{i\in I} u^{[I]}_i \leq n$).
Notice that the quasi-stable condition corresponds to the stable condition for the sub-configuration $u^{[I]}$, 
whereas a quasi-stable sub-configuration $u^{[K]}$ may have one more grain on each of its constituent vertices than can feature in a stable sub-configuration.
We will call a configuration \emph{quasi-stable} if both its clique and independent sub-configurations are quasi-stable. 

A permutation $\sigma^{[C]}$ on vertices of a component $C\in \{K,I\}$ acts on a sub-configuration $u^{[C]}$ by permuting the distribution of its grains:
\[ \sigma^{[C]}.u^{[C]} ~:=~ \left(u^{[C]}_{\sigma^{[C]}(0)},\ldots,\sigma^{[C]}_{\sigma^{[C]}(|C|-1)}\right).\]
We will call a sub-configuration $u^{[C]}$ on the component $C$ \emph{sorted} if $\left(u^{[C]}_0, \ldots, u^{[C]}_{|C|-1}\right)$ is a weakly decreasing sequence.
Every sub-configuration $u^{[C]}$ is equivalent, by some permutation $\sigma_{sort}^{[C]}$ of its vertices, to a single \emph{sorted sub-configuration} denoted $\mathsf{sort}(u^{[C]})$.
By extension, every configuration $u$ is equivalent to a single \emph{sorted configuration} $\mathsf{sort}(u)$, under the action of permutations on $K$ and $I$ that sort the individual parts, and we denote this by $(\sigma_{sort}^{[K]},\sigma_{sort}^{[I]}).u$, 
i.e. 
\begin{equation*}
\mathsf{sort}(u) ~=~ (\sigma_{sort}^{[K]},\sigma_{sort}^{[I]}).u.
\end{equation*}

We will say that two configurations $u$ and $u'$ are {\em toppling-and-permuting equivalent}, written $u\equiv_{\sigma.\Delta} u'$, if \[u' = (\sigma^{[K]},\sigma^{[I]}).u + \sum_{v\in S_{n,d}} a_v\Delta^{(v)}\] for some permutation $\sigma^{[K]}$ permuting the vertices of the clique component $K$ and some permutation $\sigma^{[I]}$ permuting the vertices of the independent component $I$. 
Here $\Delta^{(v)}$ is the action of toppling vertex $v$ and $(a_v)_v\in \mathbb{Z}^{1+n+d}$ gives the possibly negative number of topplings (or anti-topplings) of each vertex of $S_{n,d}$ including the sink. 

\begin{theorem}[Cycle Lemma]\label{ourcyclelemma}
Every sorted recurrent configuration on $S_{n,d}$ is toppling-and-permuting equivalent to exactly $n+1$ of 
the ${2n+d \choose n}{n+d \choose n}$ sorted quasi-stable non-negative configurations (including itself but no other sorted recurrent configuration).
\end{theorem}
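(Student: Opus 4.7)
The plan is to prove the theorem in three stages. First, I would verify the count: sorted quasi-stable non-negative configurations decompose as a Cartesian product of a sorted clique sub-configuration (weakly decreasing sequence of length $n$ with entries in $\{0,\ldots,n+d\}$, enumerated by $\binom{2n+d}{n}$) and a sorted independent sub-configuration (weakly decreasing sequence of length $d$ with entries in $\{0,\ldots,n\}$, enumerated by $\binom{n+d}{n}$), giving the claimed total. Next, I would use Dhar's burning algorithm (Proposition~\ref{pro:DharBurning}), the sandpile-group structure, and the fact that the permutation action preserves recurrence, to conclude that every $\equiv_{\sigma.\Delta}$-equivalence class of configurations contains a unique sorted recurrent; this already gives the clause ``no other sorted recurrent configuration.''

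The heart of the proof is the third stage: each equivalence class contains \emph{exactly} $n+1$ sorted quasi-stable non-negative configurations. The enumeration side is already favorable, since Corollary~\ref{corol:itc} (via the Schr\"oder-path bijection of Section~\ref{sectionthree}) gives $|\DecRec(S_{n,d})| = \frac{1}{n+1}\binom{2n+d}{n}\binom{n+d}{n}$, so the fibers have average size exactly $n+1$. To upgrade ``on average'' to ``every class,'' I would exhibit, for each sorted recurrent $r$, a list of $n+1$ distinct sorted quasi-stable configurations equivalent to $r$; the upper bound of $n+1$ per class then follows by counting. A natural construction starts from $r$, fires the sink, topples the resulting unstable vertices in an order dictated by Dhar-style burning until the configuration next re-enters the quasi-stable range, re-sorts, and iterates; the resulting trajectory is expected to close up on the sorted side after $n+1$ steps, with each intermediate configuration quasi-stable and consecutive ones distinct.

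The main obstacle lies in the asymmetry between clique and independent vertices: clique vertices have threshold $n+d$ and topple into both parts, while independent vertices have threshold $n+1$ and topple into the clique part (and the sink) only. A single sink-fire already destabilizes every independent vertex currently at $n$, so the operator cannot simply be ``add one to every coordinate'' --- which is essentially what works in the $d=0$ case $S_{n,0}=K_{n+1}$ and recovers the classical Dvoretzky--Motzkin cycle lemma for parking functions. For general $d$, controlling the interleaving of clique and independent topples so that the orbit has length exactly $n+1$ (rather than some longer or shorter value) is the key combinatorial difficulty; an alternative avenue, possibly cleaner, is to transfer the cyclic structure through the Schr\"oder-path bijection of Section~\ref{sectionthree} and identify it with a natural $(n+1)$-fold rotational symmetry on words.
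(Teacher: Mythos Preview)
Your overall strategy differs from the paper's in a way worth noting: you propose to import the enumeration $|\DecRec(S_{n,d})|=\frac{1}{n+1}\binom{2n+d}{n}\binom{n+d}{n}$ from the Schr\"oder-path bijection, use it to pin down the average fiber size, and then prove only a lower bound of $n{+}1$ per class. The paper does the opposite: it proves both the lower and upper bound on each fiber directly via operator analysis (Proposition~\ref{lem:counting-quasistable-nonnegative}), with no appeal to the Schr\"oder enumeration, and then \emph{derives} the enumeration as a corollary. Since Section~\ref{sectionsix} is explicitly framed as independent of the lattice-path material, your route is logically legitimate but would undo that independence.

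More importantly, your proposal has a genuine gap at the lower bound. You correctly isolate the difficulty---the asymmetric clique/independent interaction---but your operator ``fire the sink, topple until quasi-stable, sort, iterate'' is not specified well enough to guarantee $n{+}1$ distinct outputs, and you give no argument that its orbit does not close up early. The paper's missing ingredient is the \emph{weight operator} $T_W:=T_K^{n+1}T_I^{d}$, where $T_K,T_I$ topple the current maximal clique/independent vertex and re-sort. The key property (Proposition~\ref{lem:weight_description_of_quasistables}) is that $T_W$ leaves the independent sub-configuration invariant while decrementing a ``weight'' on the clique side by one; this decouples the two components. One then runs Dhar's criterion from the recurrent $v$, records the $n{+}1$ intermediate configurations $v^{[0]},\ldots,v^{[n]}$ taken just before each clique-or-sink toppling (at which moments the $I$-part is already stable), and applies the appropriate power of $T_W$ to each to force the $K$-part into the quasi-stable window. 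Distinctness of the resulting $w^{[0]},\ldots,w^{[n]}$ is then read off from the number of grains at the sink modulo $n{+}d{+}1$, an invariant of $T_W$. Without $T_W$ or an equivalent device you have no mechanism to adjust the clique side independently of the independent side, and the lower bound is not established.
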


The proof of the above result requires several technical results to first be proven and will be presented at the end of this section.
This partition of sorted quasi-stable non-negative configurations into equivalence classes having the same size yields the following corollary.

\begin{corollary}
There are precisely $\frac{1}{n+1}{2n+d \choose n}{n+d \choose n} = \frac{1}{2n+1}{2n+1\choose n}{ 2n+d \choose d}$ sorted recurrent configurations on $S_{n,d}$.
\end{corollary}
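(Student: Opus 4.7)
My plan is to partition the set $X$ of sorted quasi-stable non-negative configurations on $\Split_{n,d}$ into $\equiv_{\sigma.\Delta}$-orbits of size exactly $n+1$, each containing a unique sorted recurrent configuration. The counting corollary then follows immediately from $|X|=\binom{2n+d}{n}\binom{n+d}{n}$, which comes from independently counting weakly decreasing clique sequences of length $n$ with values in $\{0,\ldots,n+d\}$ and weakly decreasing independent sequences of length $d$ with values in $\{0,\ldots,n\}$.

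The first step is to define a cyclic action $\tau : X \to X$ of order $n+1$ that preserves $\equiv_{\sigma.\Delta}$. Building on the approach taken in~\cite{aadl} for the complete bipartite graph, I anticipate $\tau(u)$ to be given by: (i) topple the sink $s$, which increments every non-sink entry by $1$; (ii) if the result leaves $X$ because some vertex exceeds its quasi-stable bound, apply a canonical sequence of further topplings of the offending vertices to bring the configuration back into $X$; (iii) sort. Showing $\tau$ is well defined and stays within $X$ uses that the quasi-stable range is wide enough for the stabilization step to be locally determined, and preservation of $\equiv_{\sigma.\Delta}$ is automatic because $\tau$ only performs topplings and sorts.

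Verifying $\tau^{n+1}=\mathrm{id}_X$ and the freeness of the action will be the core technical content. The factor $n+1$ should reflect the symmetry among the sink and the $n$ non-sink clique vertices: iterating $\tau$ exactly $n+1$ times corresponds, at the level of toppling vectors, to a combination of topplings that becomes trivial modulo the image of the Laplacian after sorting. For freeness I would exhibit a suitable statistic on $X$ that strictly advances under $\tau$ and takes exactly $n+1$ distinct values per orbit. The last step is to identify the unique sorted recurrent in each orbit by applying Dhar's burning algorithm from Proposition~\ref{pro:DharBurning}: among the $n+1$ configurations in an orbit, exactly one passes the burning test, and this one is necessarily stable since the remaining $n$ quasi-stable representatives each contain at least one saturated entry that obstructs stability.

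The main obstacle is pinning down the precise definition of $\tau$ so that its orbits have length exactly $n+1$. A naive definition like ``topple the sink and sort'' fails: for instance on $(2,2;2)\in X$ for $\Split_{2,1}$, a single sink topple produces $(3,3;3)$, which lies outside $X$ (as the independent entry $3$ exceeds $n=2$), and the true orbit of $(2,2;2)$ within $X$ is $\{(0,0;0),(1,1;1),(2,2;2)\}$, reached by anti-topplings rather than topplings. So the definition of $\tau$ must incorporate a non-trivial wrap-around coordinated with where each orbit intersects the quasi-stable region. I expect the cleanest formulation to either be case-dependent---with different regimes near the lower and upper quasi-stable bounds---or to proceed through a combinatorial intermediate such as a lattice-path or word representation in which the cyclic structure of length $n+1$ is manifest.
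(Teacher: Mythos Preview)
Your overall strategy---count the set $X$ of sorted quasi-stable non-negative configurations as $\binom{2n+d}{n}\binom{n+d}{n}$ and partition it into $\equiv_{\sigma.\Delta}$-classes of size $n+1$, each containing a unique sorted recurrent---is exactly the skeleton of the paper's argument. The enumeration of $|X|$ is correct and matches the paper.

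The genuine gap is that you never define the map $\tau$, and you yourself observe that the natural candidate ``topple the sink, restabilize, sort'' does not stay inside $X$ in any clean way. This is not a minor detail: producing a well-defined action on $X$ whose orbits have size exactly $n+1$ is the entire content of the Cycle Lemma (Theorem~\ref{ourcyclelemma}), and the paper does \emph{not} do it via a single cyclic map. Instead it introduces three commuting ``topple-max-then-sort'' operators $T_s,T_K,T_I$ on the larger set of sorted \emph{compact} configurations, establishes the identity $T_sT_K^{n}T_I^{d}=\mathrm{Id}$, and then uses a weight function and an auxiliary operator $T_W=T_K^{n+1}T_I^{d}$ to normalize back into $X$. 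The $n+1$ configurations in a class are obtained not as iterates of one map, but as the $T_W$-normalizations of the $n+1$ intermediate configurations arising from the prefixes of Dhar's criterion $T_sT_I^{k_0}\prod_{i=1}^n(T_KT_I^{k_i})$; distinctness is argued via the number of grains at the sink modulo $n+d+1$, and the ``no others'' direction requires a separate careful analysis with Euclidean division. A single $\tau$ with the properties you want can be \emph{extracted} from this machinery a posteriori, but it is not the starting point.

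A second, smaller gap: your identification of the unique recurrent in each orbit assumes that the other $n$ representatives ``each contain at least one saturated entry that obstructs stability.'' You have not justified this, and the paper does not argue this way; it relies instead on the standard sandpile fact that every toppling-equivalence class contains exactly one recurrent configuration, and shows directly (via the Dhar decomposition above) that this recurrent is among the $n+1$ quasi-stable non-negative representatives.
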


Next we will analyse the equivalence classes of the toppling-and-permuting relation $\equiv_{\sigma.\Delta}$ through some new operators on configurations.
The \emph{topple-max-then-sort operator}, $T_C$, of a component $C$ acts on a configuration $u$ as
  \[ T_C.u = \mathsf{sort}(u+\Delta_c), \]
 where $c$ is one of the vertices for which $u_c = \max_{c'\in C} u_{c'}$, i.e. it is one of the vertices having the maximal number of grains in the component $C$.

These operators are, in general, not reversible either on configurations (due to sorting considerations) or even on sorted configurations.
However, these operators become reversible on the subset of sorted {\em compact} configurations (to be defined next) that contain sorted quasi-stable non-negative configurations (which itself includes all the recurrent configurations):\medskip
$$\left\{\mbox{sorted recurrent}\right\}  ~ \subseteq ~ \left\{ \mbox{sorted quasi-stable and non-negative} \right\} ~  \subseteq ~ \left\{\mbox{sorted compact}\right\}.$$

\begin{definition}
A configuration $u$ on $S_{n,d}$ is \emph{compact} if 
\begin{equation*}
\mathsf{spread}(u^{[K]}) \leq n+d+1 \quad \mbox{ and }\quad  \mathsf{spread}(u^{[I]}) \leq n+1,
\end{equation*}
where the \emph{spread} $\mathsf{spread}(u'^{[C]})$ in a component $C$ of a configuration $u'$ is defined by
\[\mathsf{spread}(u'^{[C]}) := \max_{v\in C} u'_v - \min_{v \in C} u'_v.\]
\end{definition}

There are two permutations of vertices of a component $C$ that will be of special interest to us.
We denote by $\iota_{[C]}$ the identity permutation and by $\tau_{[C]}$ the cycle permutation defined by $\tau_{[C]}(c):=c-1\mod |C|$.
The cycle permutation simply moves the vertex in first position $0$ to the last position $|C|-1$ and shifts the others.

We see each sub-configuration $u^{[C]}$ as a vector in $\mathbb{Z}^{|C|}$.
This allows addition of vectors such as $u^{[C]}+v^{[C]}$ for two sub-configurations.
For any constant $k$, we define the constant sub-configuration $k^{[C]}:=(k)_{c\in C}$.
Similarly, for any constant $k$ and vertex $v_C \in C$ we define the sub-configuration $k^{[v_C]}$ that contains $k$ grains on vertex $v_C$ and $0$ elsewhere. 

The following proposition details the behaviour of the three toppling operators on the set of sorted compact configurations. This will be implicitly used in the lemmas that follow it.

\begin{proposition}
For a sorted compact configuration $u=(u^{[K]};u^{[I]})$ on $S_{n,d}$ we have
  \[
	\begin{array}{lcl}
             T_s.u & = & \left(u^{[K]}+1^{[K]};u^{[I]}+1^{[I]}\right) \\[0.7em]
             T_K.u & = & \left(\tau_{[K]}.\left(u^{[K]}-(n+d+1)^{[0_K]}+1^{[K]}\right);u^{[I]}+1^{[I]}\right) \\[0.7em]
             T_I.u & = & \left(u^{[K]}+1^{[K]};\tau_{[I]}.\left(u^{[I]}-(n+1)^{[0_I]}\right)\right), 
           \end{array}
	\]
  and all three of these are themselves compact configurations.
  These operators restricted to sorted compact configurations are reversible with 
  \[
	\begin{array}{lcl}
             T_s^{-1}.u & = & \left(u^{[K]}-1^{[K]};u^{[I]}-1^{[I]}\right) \\[0.7em]
             T_K^{-1}.u & = & \left(\left(\tau_{[K]}^{-1}.u^{[K]} \right)+(n+d+1)^{[0_K]}-1^{[K]}; u^{[I]}-1^{[I]}\right) \\[0.7em]
             T_I^{-1}.u & = & \left(u^{[K]}-1^{[K]};\left(\tau_{[I]}^{-1}.u^{[I]} \right)+(n+1)^{[0_I]}\right).
           \end{array}
	 \]
\end{proposition}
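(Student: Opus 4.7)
The plan is to handle the three operators $T_s$, $T_K$, $T_I$ separately, verifying in each case that the stated formula correctly describes $T_C.u$, that the result is again sorted and compact, and that the stated inverse formula does invert $T_C$ on this domain.

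For $T_s$, the argument is short: toppling the sink distributes one grain to each of its neighbours, which are all non-sink vertices; this turns $u^{[K]}$ into $u^{[K]}+1^{[K]}$ and $u^{[I]}$ into $u^{[I]}+1^{[I]}$, both of which remain sorted with unchanged spread, and the inverse formula is immediate. The substantive case is $T_K$ (with $T_I$ entirely parallel). Sortedness places the maximum of $u^{[K]}$ at position $0_K$, so toppling this clique vertex subtracts $n+d$ (its degree) from the entry at $0_K$, adds $1$ to each other clique vertex, and adds $1$ to every independent vertex. The change on the clique is written as $-(n+d+1)^{[0_K]}+1^{[K]}$, the extra $-1$ at $0_K$ compensating for the spurious grain that the $1^{[K]}$ term gives back to $0_K$. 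The entries of this intermediate vector are then $(u^{[K]}_0-(n+d),\, u^{[K]}_1+1,\, \ldots,\, u^{[K]}_{n-1}+1)$, and the key observation is that the compactness bound $u^{[K]}_0-u^{[K]}_{n-1} \leq n+d+1$ is exactly the inequality $u^{[K]}_{n-1}+1 \geq u^{[K]}_0-(n+d)$ needed for the entry at $0_K$ to become the new minimum while the other entries keep their relative order. A single cyclic shift by $\tau_{[K]}$ then sorts the vector, yielding the stated formula. The $T_I$ case is the same, with the independent degree $n+1$ replacing $n+d$ and without a $+1^{[I]}$ term because independent vertices are pairwise non-adjacent.

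Preservation of compactness is a short calculation: for $T_K.u$ the new clique spread equals $(u^{[K]}_1+1)-(u^{[K]}_0-(n+d)) = n+d+1-(u^{[K]}_0-u^{[K]}_1) \leq n+d+1$, while the independent spread is unchanged; the $T_I$ case is symmetric, and $T_s$ trivially preserves both spreads. For reversibility, one substitutes the proposed $T_C^{-1}.u$ into $T_C$ and checks that $\tau_{[C]} \circ \tau_{[C]}^{-1}$ equals the identity and that the arithmetic adjustments $\pm(n+d+1)^{[0_K]}$, $\pm(n+1)^{[0_I]}$, and $\pm 1^{[K]}$, $\pm 1^{[I]}$ all cancel. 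Before composing, one must verify that $T_C^{-1}.u$ is itself sorted and compact so that $T_C$ can legally be applied to it; this again reduces to the same spread bound, applied in the reverse direction.

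The main subtlety, in my view, is the dual role played by compactness: it must simultaneously guarantee that sorting the post-toppling vector requires only the single cyclic shift $\tau_{[C]}$ \emph{and} that the image lies back in the sorted compact class so that the inverse is well-defined. Isolating this as a short lemma, of the form ``for a weakly decreasing vector with spread at most $\delta$, subtracting $\delta$ from the first entry yields a vector of spread at most $\delta$ that is sorted by a single right cyclic shift'', reduces the entire proposition to an arithmetic check performed three times.
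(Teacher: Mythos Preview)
Your proof is correct and follows essentially the same approach as the paper's own proof: both identify the maximum at position $0$ by sortedness, use compactness to see that the toppled vertex becomes the new minimum so that the single cyclic shift $\tau_{[C]}$ suffices for sorting, and verify preservation of compactness via the spread calculation. Your treatment is slightly more explicit arithmetically and you package the key observation as a standalone lemma at the end, but the content and structure are the same.
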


\begin{proof}
Let  $u=(u^{[K]};u^{[I]})$ be a sorted compact configuration on $S_{n,d}$.
The expression for $T_s.u$ is simply a reformulation of the description of the configuration $\mathsf{sort}(u+\Delta^{(s)})=(\iota_{[K]},\iota_{[I]}).(u+\Delta^{(s)})$. 
Moreover, the spread of each component is preserved by addition of constant vectors $1^{[C]}$, so the configuration remains compact.

Since the configuration $u$ is sorted, for both sub-configurations $C\in \{K,I\}$ the first vertex contains the maximal number of grains, i.e. $u^{[C]}_0=\max_{c\in C} u_c$.
As $u$ is compact, on toppling a vertex with the maximal number of grains it then becomes one of the vertices with the minimal number of grains in the component to which it belongs.
It means that the component of the toppled vertex may be sorted by the (cycle) permutation $\tau_{[C]}$. 
Finally this shows that the expression $T_{[C]}.u$ is an alternative description for $\mathsf{sort}(u+\Delta^{(0_C)})$.

Moreover, the maximal resulting spread occurs in the case of a second vertex $1_C$ having a maximal number of grains such that $u^{[C]}_1=u^{[C]}_0$.
It coincides, by design, with the bound on spread used in the definition of compact configurations. 

In the expressions 
\begin{equation*}
T_{[K]}.u = \left(\tau_{[K]},\iota_{[I]}\right).\left(u+\Delta^{(0_K)}\right) \mbox{  and  } T_{[I]}.u = \left(\iota_{[K]},\tau_{[I]}\right).\left(u+\Delta^{(0_I)}\right),
\end{equation*}
the actions on the configurations do not depend on the particular $u$.

Since all the steps are reversible, we deduce that max-toppled-then-sort operators are all injective.
The reverse map is defined by reversing these descriptions:
\[ T_{[K]}.u = \left(\left(\tau_{[K]}^{-1},\iota_{[I]}\right).u\right)-\Delta^{(0_K)} \mbox{ and } T_{[I]}.u = \left(\left(\iota_{[K]},\tau_{[I]}^{-1}\right).u\right)-\Delta^{(0_I)}.\]
It remains to check that those reverse operators map a compact configuration to another compact configuration.
In a component, the case of maximal spread corresponds to cases for which $u^{[C]}_{|C|-2}=u^{[C]}_{|C|-1}$.
Similar reasoning to the first part of the proof ensures that the result is also compact.
\end{proof}

The following proposition shows that the three toppling operators are very well-behaved on the set of sorted compact configurations and summarizes Dhar's criterion in terms of these operators. 

\begin{proposition}\label{lem:identity-operator}
The restrictions to sorted compact configurations of the operators $T_s$, $T_K$ and $T_I$ commute and \[T_sT_K^{|K|}T_I^{|I|}=\mathsf{Id}\] where $\mathsf{Id}$ is the identity map/operator.
\end{proposition}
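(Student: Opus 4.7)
The plan is to verify both assertions by writing each of the three operators as a permutation composed with a ``raw'' toppling vector $\Delta^{(v)}$ (the vector that subtracts $\thresh(v)$ at $v$ and adds $1$ at each neighbour of $v$). The formulas from the preceding proposition show that, restricted to sorted compact configurations,
\begin{equation*}
T_s = \Delta^{(s)}, \qquad T_K = \tau_{[K]} \circ \Delta^{(0_K)}, \qquad T_I = \tau_{[I]} \circ \Delta^{(0_I)}.
\end{equation*}

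For commutativity, the cyclic shifts $\tau_{[K]}$ and $\tau_{[I]}$ act on disjoint components and fix every constant sub-configuration; since $\Delta^{(v)}$, for $v$ not belonging to a component $C$, acts on $C$ only by the addition of the constant vector $1^{[C]}$, each $\tau_{[C]}$ commutes with every $\Delta^{(v)}$ for $v\notin C$. Combined with the fact that toppling vectors are mutually commuting additive vectors, this immediately yields the commutation of any two of $T_s$, $T_K$, $T_I$. If a direct check is preferred, one computes from the explicit formulas that $T_KT_I.u$ and $T_IT_K.u$ both simplify to
\begin{equation*}
\left(\tau_{[K]}.\left(u^{[K]}+2\cdot 1^{[K]}-(n+d+1)^{[0_K]}\right);~ \tau_{[I]}.\left(u^{[I]}+1^{[I]}-(n+1)^{[0_I]}\right)\right).
\end{equation*}

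For the identity $T_sT_K^nT_I^d=\mathsf{Id}$, the plan is to prove by induction, using the intertwining relation $\Delta^{(v)}\circ\tau_{[C]}=\tau_{[C]}\circ\Delta^{(\tau_{[C]}(v))}$ (which holds because any cyclic shift of a component is a graph automorphism of $S_{n,d}$), that on sorted compact configurations
\begin{equation*}
T_K^n = \tau_{[K]}^n\circ\sum_{v\in K}\Delta^{(v)} = \sum_{v\in K}\Delta^{(v)} \quad\text{and}\quad T_I^d = \sum_{w\in I}\Delta^{(w)}.
\end{equation*}
The key point is that after $n$ (respectively $d$) iterations the cumulative shift $\tau_{[K]}^n$ (respectively $\tau_{[I]}^d$) is trivial and each vertex of $K$ (respectively $I$) has played the role of the toppled vertex exactly once. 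Composing the three powers then yields $\sum_u \Delta^{(u)}$ summed over every vertex of $S_{n,d}$ including the sink, which is the zero map because each vertex $u$ loses $\thresh(u)$ from its own toppling and gains exactly $\thresh(u)$ from the topplings of its neighbours. The main obstacle I anticipate is bookkeeping in the induction for $T_K^n$: the ``$+1^{[K]}$'' appearing in the formula for $T_K$ is applied to \emph{every} clique vertex including the one about to lose $n+d+1$ grains, so the effective loss at the toppled vertex is $-(n+d)$; once this is borne in mind, the cancellation of constants across the full composition is automatic.
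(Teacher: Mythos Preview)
Your proposal is correct and follows essentially the same approach as the paper. The only stylistic difference is in the commutativity argument: the paper argues semantically that after toppling the maximal vertex of one component the maximal vertex of the other component remains maximal (so both orders reduce to ``parallel-topple then sort''), whereas you verify commutation algebraically from the explicit formulas; for the identity $T_sT_K^{|K|}T_I^{|I|}=\mathsf{Id}$ both arguments are identical, using $\tau_{[C]}^{|C|}=\iota_{[C]}$ and $\sum_v \Delta^{(v)}=0$.
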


\begin{proof}
Let $X$ and $Y$ be two distinct components among $\{K,I,\{s\}\}$.
We show that $T_X.T_Y.u=T_Y.T_X.u$ corresponds, for any sorted compact configuration $u$, to the toppling of the 
initially maximal vertex $x(u)=0_X\in X$ and $y(u)=0_Y\in Y$ in respective components followed by sorting each component.
Indeed, when $x(u)$ is toppled, $y(u)$ remains maximal among vertices of $Y$ since each vertex of this component received the same number of grains since the sorting only act of vertices of component $X$.
 Hence when $T_Y$ is applied, $y(u)=0_Y$ is still a maximal vertex, so it may be toppled and the following sorting acts only on the component $Y$.

So $T_Y.T_X.u$ is equivalent to the parallel toppling of $x(u)$ and $y(u)$ followed by sorting.
By symmetry of this argument, $T_X.T_Y.u$ is also equivalent to this parallel toppling then sorting so $T_Y.T_X=T_X.T_Y$, and the two operators commute.
We remark that if the toppled vertex were not $0_X$ but another vertex instead, so that $u_0^{[X]}=u_{x(u)}^{[X]}$, the result is the same after the sorting of component $X$. 
  
The operator $T_s$ causes the sink $s$ to topple once.
The operator $T_K^{|K|}$ causes every vertex of $K$ to topple exactly once, 
and so we have the equivalence \begin{equation*} T_K^{|K|} = \sum_{k\in K} \Delta^{(k)},  \end{equation*}
where we make use of the fact that $\tau_{[K]}^{|K|}$ leads to the identity permutation of this component.
Similarly, the operator $T_I^{|I|}$ causes every vertex of $I$ to topple exactly once, and this results in the equivalence
\begin{equation*} T_I^{|I|} = \sum_{i\in I} \Delta^{(i)},  \end{equation*}
where we make use of the fact that $\tau_{[I]}^{|I|}$ leads to the identity permutation of this component.
Finally, every vertex topples exactly once as a result of the operator $T_sT_K^{|K|}T_I^{|I|}$ as \begin{equation*} T_sT_K^{|K|}T_I^{|I|}.u = u+\sum_{v\in S_{n,d}} \Delta^{(v)} = u, \end{equation*} 
since, in the sandpile model, the action $\sum_{v\in S_{n,d}} \Delta^{(v)}$ has the net effect of leaving the configuration to which it is applied unchanged.
\end{proof}

Next we introduce the notion of the weight of a sub-configuration and introduce a weight operator on configurations. 
Some necessary properties of these will then be proven in Proposition~\ref{lem:weight_description_of_quasistables} which will 
be essential in the proof of Proposition~\ref{lem:counting-quasistable-nonnegative}.

\begin{definition} \label{defweights}
\begin{enumerate}
\item[]
\item[(a)] The \emph{weight} of a sub-configuration $u^{[K]}$ on the clique component $K$ of $S_{n,d}$ is
\[ \mathsf{weight}(u^{[K]}) :=  \sum_{k\in K} \left\lfloor \frac{ u^{[K]}_k}{n+d+1} \right\rfloor.\]
\item[(b)] The \emph{weight operator} $T_W$ is 
\[ T_W := T_K^{|K|+1}T_I^{|I|}.\]
\item[(c)] Two configurations $u$ and $v$ are \emph{$T_W$-equivalent}, $u \equiv_{T_W} v$, if $u = T_W^t.v$ for some $t\in \mathbb{Z}$. 
\end{enumerate}
\end{definition}

Note that the equivalence in Definition~\ref{defweights}(c) implies a divisibility by $(n+d+1)$ of the difference between the number of grains at the sink in both configurations.

\begin{proposition}\label{lem:weight_description_of_quasistables}
Let $u$ and $v$ be sorted compact configurations on $S_{n,d}$.
\begin{enumerate} 
\item[(a)] $u = T_W.v$ implies that $u^{[I]}=v^{[I]}$. More generally, $u \equiv_{T_W} v$ implies that $u^{[I]}=v^{[I]}$.
\item[(b)] The (sorted) compact sub-configuration $u^{[K]}$ on $S_{n,d}$ is quasi-stable and non-negative if and only if $ \mathsf{weight}(u^{[K]}) = 0$.
\item[(c)]  Moreover $\mathsf{weight}(T_W.u^{[K]}) = \mathsf{weight}(u^{[K]})-1$ and the quasi-stable and non-negative configuration $T_W$-equivalent to $u^{[K]}$ is $T_W^{\mathsf{weight}(u^{[K]})}.u^{[K]}.$ 
\end{enumerate}
\end{proposition}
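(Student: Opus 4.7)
The plan is to first derive a compact closed form for $T_W$ on sorted compact configurations, and then to read off all three assertions from this formula combined with the compactness bound. Using the identity $T_s T_K^{|K|} T_I^{|I|} = \mathsf{Id}$ from Proposition~\ref{lem:identity-operator}, one rewrites
\[
T_W = T_K^{|K|+1} T_I^{|I|} = T_K \cdot \bigl(T_K^{|K|} T_I^{|I|}\bigr) = T_K \cdot T_s^{-1}.
\]
Substituting the explicit formulas for $T_s^{-1}$ and $T_K$ from the preceding proposition and cancelling the terms $\pm 1^{[K]}$ on the clique side and $\pm 1^{[I]}$ on the independent side yields the key identity
\[
T_W.u \;=\; \bigl(\tau_{[K]}.(u^{[K]} - (n+d+1)^{[0_K]});\; u^{[I]}\bigr).
\]

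Part (a) is then immediate: the independent sub-configuration is invariant under a single application of $T_W$. For general $T_W$-equivalence this extends by iteration, together with the symmetric formula $T_W^{-1}.u = \bigl(\tau_{[K]}^{-1}.u^{[K]} + (n+d+1)^{[0_K]};\; u^{[I]}\bigr)$ obtained analogously from $T_W^{-1} = T_s T_K^{-1}$. For part (b), I would exploit compactness directly. Set $q := \lfloor u^{[K]}_{n-1}/(n+d+1)\rfloor$; since $u^{[K]}$ is sorted with $u^{[K]}_0 - u^{[K]}_{n-1} \leq n+d+1$, every entry lies in $[\,q(n+d+1),\,(q+2)(n+d+1)\,)$, so each floor $\lfloor u^{[K]}_k/(n+d+1)\rfloor$ equals $q$ or $q+1$. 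If $m$ indices attain $q+1$, then $\mathsf{weight}(u^{[K]}) = nq + m$ with $0\leq m\leq n$. The equation $nq+m=0$ forces $q=0,\,m=0$: values $q\geq 1$ make $m$ negative, $q=-1$ forces $m=n$ which contradicts $q$ being the floor of the minimum, and $q\leq -2$ makes $m$ exceed $n$. Thus $\mathsf{weight}(u^{[K]})=0$ is equivalent to every $u^{[K]}_k$ lying in $[0,n+d]$, i.e.\ to $u^{[K]}$ being quasi-stable and non-negative.

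For part (c), the closed form shows that $T_W$ acts on $u^{[K]}$ by a permutation (which preserves the sum of floors) followed by the subtraction of $n+d+1$ at a single coordinate (which decreases exactly one floor by $1$). Hence $\mathsf{weight}(T_W.u^{[K]}) = \mathsf{weight}(u^{[K]}) - 1$, and iterating gives a $T_W$-equivalent configuration of weight zero at time $t = \mathsf{weight}(u^{[K]})$. By part (b) this iterate is the desired quasi-stable non-negative representative, and strict monotonicity of the weight along the orbit yields its uniqueness. The main obstacle, I expect, is the case analysis in part (b): one must show that the \emph{sum} of the $n$ floors vanishing forces each individual floor to vanish, which works precisely because the compactness bound $n+d+1$ matches the divisor appearing in the weight definition, so that floors within a compact sub-configuration span at most two consecutive integer values.
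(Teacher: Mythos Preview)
Your proof is correct and follows essentially the same route as the paper: both derive the closed form $T_W.u = \bigl(\tau_{[K]}.(u^{[K]}-(n+d+1)^{[0_K]});\,u^{[I]}\bigr)$, use Euclidean division by $n+d+1$ together with compactness to see that the floors span at most two consecutive integers, and deduce that $T_W$ decrements the weight by exactly one. Your organization is a little cleaner in that you prove the biconditional of part~(b) directly via the case analysis on $q$ and $m$, whereas the paper establishes it implicitly by tracking the quotient vector $(q_0)^p(q_0-1)^{n-p}$ under iteration of $T_W$; but the underlying argument is the same.
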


\begin{proof}
(a)
Consider the sub-configuration $u^{[I]}$.
Since the restriction to the component $I$ of operators $T_s$ and $T_K$ are equivalent, in both simply add one grain to each vertex of $I$, we have
\[(T_W.u)^{[I]}=(T_K^{|K|+1}T_I^{|I|}.u)^{[I]} = (T_sT_K^{|K|}T_I^{|I|}.u)^{[I]}=(\mathsf{Id}.u)^{[I]}=u^{[I]},\]
with the penultimate equality coming from Proposition~\ref{lem:identity-operator}.
Hence $u^{[I]}=(T_W.u)^{[I]}$. More generally, and using the same reasoning, we have that $u \equiv_{T_W} v$ implies $u^{[I]}=v^{[I]}.$

\noindent (b)
As regards the sub-configuration $u^{[K]}$, we consider the {\em Euclidean division} of each entry of the sub-configuration $u^{[K]}$ defined as follows:
  \[u_k^{[K]} =: q_k(n+d+1)+r_k  \qquad \mbox{ for all }k \in K,\]
  where $(q_k)_{k \in K}$ is called the {\em quotient vector} and $(r_k)_{k\in K}$ is called the {\em remainder vector} $(r_k)_{k\in K}$.
  By design, the operator $T_{W}$ satisfies:
  \[ T_W.u = \tau_{[K]}.\left(u-(n+d+1)^{[0_K]}\right).\]
  In terms of the Euclidean division by $(n+d+1)$, the action of $T_W$ may be described directly on the quotient and remainder vectors (in that order) as follows:
  \[ T_W.(q_k)_{k\in K} = \tau.\left((q_k)_{k\in K}-1^{[0_K]}\right) \qquad\mbox{ and }\qquad T_W.\left(r_k\right)_{k\in K} = \tau.\left(r_k\right)_{k\in K}.\]
  Since $u^{[K]}$ is compact, we have $u_{|K|-1}^{[K]} \leq u_0^{[K]} \leq u_{|K|-1}^{[K]}+(n+d+1)$ and so 
  \[ (q_k)_{k\in K} = (q_0,q_0,....,q_0,q_0-1,\ldots,q_0-1) = (q_0)^p(q_0-1)^{n-p}\]
  for some $p\in \{1,\ldots, n\}$.
  Hence
\begin{equation*}
T_W.\left((q_0)^p(q_0-1)^{n-p}\right) = (q_0)^{p-1}(q_0-1)^{n-p+1}.
\end{equation*}
By applying the operator $T_W$ a total number of $\sum_{k\in K} q_k = \mathsf{weight}(u^{[K]})$ times, 
we obtain the configuration $u':= T_W^{\mathsf{weight}\left(u^{[K]}\right)} .u^{[K]}$ 
such that $\left(q'_k\right)_{k\in K}=(0)_{k\in K}$.
Thus $u'= \tau_{[K]}^{\mathsf{weight}(u^{[K]})}.(r_k)_{k\in K}$ is also non-negative by definition of the reminders via Euclidean division. 

\noindent (c) Moreover, for any power $t\in \mathbb{Z}$ other than $t= \mathsf{weight}(u^{[K]})$, we can consider $u''^{[K]}:=T_W^{t}.u^{[K]}$.
Since $\left(q^{\prime\prime} _k\right)_{k\in K} \neq (0)_{k\in K}$, 
either we have $q^{\prime\prime}_0 > 0$ in which case $u^{\prime \prime [K]}_0$ is not quasi-stable, 
or $q^{\prime\prime}_{|K|-1} < 0$ in which case we have $u^{\prime\prime[K]}_{|K|-1}$ is negative. 
\end{proof}

Finally we are in a position to state the main result of this section, and from which the proof of Theorem~\ref{ourcyclelemma} essentially follows.

\begin{proposition}\label{lem:counting-quasistable-nonnegative}
Every sorted compact configuration is toppling-and-permuting equivalent to exactly $n+1$ sorted quasi-stable and non-negative configurations. 
\end{proposition}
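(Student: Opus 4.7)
The plan is to parameterize the toppling-and-permuting orbit of a sorted compact $u$ by $\mathbb{Z}^{2}$ via the operators $T_K, T_I$, organize it by $T_W$-orbits, and count those $T_W$-orbits whose (constant) $I$-part lies in $[0,n]^{d}$.

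First, on sorted compact configurations, the equivalence $\equiv_{\sigma.\Delta}$ is generated by $T_s, T_K, T_I$. Using the relation $T_s T_K^{n} T_I^{d} = \mathsf{Id}$ from Proposition~\ref{lem:identity-operator}, the group they generate is $\langle T_K, T_I\rangle \cong \mathbb{Z}^{2}$ (with $T_s = T_K^{-n} T_I^{-d}$), and it acts freely on the orbit of $u$ since iterating $T_W$ strictly decreases the $K$-weight by Proposition~\ref{lem:weight_description_of_quasistables}(c). Thus the sorted compact configurations in the orbit of $u$ are in bijection with $(b,c) \in \mathbb{Z}^{2}$ via $T_K^{b} T_I^{c}.u$.

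Second, the weight operator $T_W = T_K^{n+1} T_I^{d}$ acts on $\mathbb{Z}^{2}$ as $(b,c) \mapsto (b+n+1,c+d)$. Writing $c = qd+r$ uniquely with $r \in \{0,\ldots,d-1\}$ and setting $b^{*} := b - q(n+1)$, and using $d = |I|$, I would show that $T_W$ preserves $(b^{*}, r)$ while only incrementing $q$. Hence the $T_W$-orbits correspond bijectively to pairs $(b^{*}, r) \in \mathbb{Z} \times \{0, \ldots, d-1\}$. By Proposition~\ref{lem:weight_description_of_quasistables}(a), each $T_W$-orbit has a constant $I$-part equal to $z_{r} + b^{*[I]}$, where $z_{r} := (T_I^{r}.u)^{[I]}$, and by part (c) each contains a unique representative with $K$-part quasi-stable and non-negative. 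So counting sorted quasi-stable non-negative configurations equivalent to $u$ reduces to counting pairs $(b^{*}, r)$ with $z_{r} + b^{*[I]} \in [0, n]^{d}$; for each fixed $r$ there are exactly $n + 1 - \mathsf{spread}(z_{r})$ valid $b^{*}$, a non-negative count by compactness.

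Finally, summing gives the total $d(n+1) - \sum_{r=0}^{d-1} \mathsf{spread}(z_{r})$, so the proposition reduces to the identity $\sum_{r=0}^{d-1} \mathsf{spread}(z_{r}) = (d-1)(n+1)$. I would prove this by first establishing, by induction on $r$, the explicit cyclic formula
\[ z_{r,i} \;=\; z_{0,(i+r)\bmod d} \;-\; \left\lfloor \tfrac{i+r}{d} \right\rfloor (n+1), \qquad r,i \in \{0,\ldots,d-1\}. \]
The inductive step uses that compactness forces $z_{r-1,0} - (n+1) \leq z_{r-1,d-1}$, so when $T_I$ subtracts $(n+1)$ from the maximum and reinserts, the new entry lands at the bottom of the sorted list, matching the formula. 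The formula then gives $\mathsf{spread}(z_{0}) = z_{0,0} - z_{0,d-1}$ and, for $r \geq 1$, $\mathsf{spread}(z_{r}) = (n+1) - (z_{0,r-1} - z_{0,r})$; summing telescopes to $(d-1)(n+1)$. The main anticipated obstacle is the careful verification of this cyclic formula, since it is precisely there that compactness is used to control how $T_I$ reorders entries at each iteration.
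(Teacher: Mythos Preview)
Your approach is correct and genuinely different from the paper's. The paper argues by first passing to the recurrent representative $v=\mathsf{rec}(u)$, running Dhar's criterion in ITC order to exhibit $n+1$ intermediate configurations $v^{[0]},\ldots,v^{[n]}$ whose $I$-parts are quasi-stable non-negative, then $T_W$-adjusting each to fix the $K$-part; distinctness is read off from the sink height modulo $n+d+1$, and the ``no others'' half is a somewhat intricate case analysis on the exponents $(a,b)$ in $w=T_W^{c}T_K^{a}T_I^{b}.v$. You bypass Dhar entirely: you parameterize the orbit by $\mathbb{Z}^2$, fiber it into $T_W$-orbits indexed by $(b^*,r)\in\mathbb{Z}\times\{0,\ldots,d-1\}$, observe that each fiber contributes exactly one candidate whose $K$-part is quasi-stable non-negative, and then count the $(b^*,r)$ for which the (constant) $I$-part lies in $[0,n]^d$. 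The reduction to the telescoping identity $\sum_{r=0}^{d-1}\mathsf{spread}(z_r)=(d-1)(n+1)$ is clean, and your cyclic formula for $z_{r,i}$ together with the compactness bound $z_{r-1,0}-(n+1)\le z_{r-1,d-1}$ is exactly what is needed to prove it. What the paper's route buys is an explicit link between the $n+1$ configurations and the steps of Dhar's burning algorithm (useful for Theorem~\ref{ourcyclelemma}); what yours buys is a shorter, case-free count that makes the role of compactness transparent.

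Two points deserve tightening. First, your justification that $\langle T_K,T_I\rangle$ acts freely (hence $\cong\mathbb{Z}^2$) is incomplete: the fact that $T_W$ has infinite order only shows one cyclic subgroup is infinite. A quick fix is to compare total grain counts on the two components: $T_K^{a}T_I^{b}$ changes $\sum u^{[I]}$ by $ad-b(n+1)$ and $\sum u^{[K]}$ by $bn-a(d+1)$, and the system $ad=b(n+1)$, $bn=a(d+1)$ forces $a=b=0$. Second, and more substantively, the assertion that on sorted compact configurations the relation $\equiv_{\sigma.\Delta}$ is \emph{generated} by $T_s,T_K,T_I$ is not proved; you need that any sorted compact $u'$ in the class of $u$ is of the form $T_K^{a}T_I^{b}.u$. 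This is true, and the key is that if $u'=\mathsf{sort}(u+\lambda)$ with $\lambda=\sum_k\alpha_k\Delta^{(v_k)}+\sum_i\beta_i\Delta^{(w_i)}$, then compactness of $u'^{[I]}$ forces the multiset $\{u^{[I]}_i-\beta_i(n+1)\}$ to coincide with the one produced by $T_I^{B}$ where $B=\sum\beta_i$ (the $\beta_i$ are forced, up to permuting equal $u_i$'s, to take two consecutive values with the larger on the larger $u_i$'s), and similarly for the $K$-part. The paper glosses over the same point, so you are on equal footing, but in a self-contained write-up this lemma should be stated and proved. Finally, treat the case $d=0$ separately: there $T_I$ is vacuous, the orbit is $\langle T_K\rangle\cong\mathbb{Z}$, the $T_W$-cosets are $\mathbb{Z}/(n+1)$, and each contains a unique quasi-stable non-negative representative, giving $n+1$ directly.
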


\begin{proof}
Let $u$ be a sorted compact configuration and consider $v := \mathsf{rec}(u)$, the sorted recurrent configuration which is toppling-and-permuting equivalent to $u$.
Apply Dhar's criterion, Prop.~\ref{pro:DharBurning}, to $v$.
We start from $v^{[0]}=v$ which is a quasi-stable and non-negative configuration.
We topple the sink once using the operator $T_s$ and suppose $k_0$ applications of $T_I$ are needed to reach a sub-configuration on $I$ that is stable, and call the configuration $v^{[1]}$.
Following this, we topple a clique vertex by applying $T_K$ and suppose that $k_1$ applications of $T_I$ to $v^{[1]}$ are required to reach a sub-configuration on $I$ that is stable, and call the configuration $v^{[2]}$, which is again quasi-stable.
  Since Dhar's criterion for recurrent configurations may be decomposed as
  \[ T_sT_I^{k_0}\prod_{i=1}^n (T_KT_I^{k_i}) \]
  we obtain from $v$, via each prefix of operators before an occurrence of $T_s$ or $T_K$, precisely  $n+1$ configurations $(v^{[i]})_{i=0,\ldots, n}$ where the $I$ component is quasi-stable and non-negative.
By the $T_W$-equivalence described in Proposition~\ref{lem:weight_description_of_quasistables}, we deduce $n+1$ quasi-stable non-negative configurations $(w^{[i]})_{i=0,\ldots, n} := (T_W^{\mathsf{weight}(v^{[i]})}.v^{[i]})_{i=0,\ldots, n}$ toppling-and-permuting equivalent to $v$, and also to $u$.

It remains to show that each of these $n+1$ configurations are distinct. 
To do this, we will study the number of grains at the sink modulo $n+d+1$ for all the $n+1$ configurations. 
First we notice that since toppling the sink removes $n+d$ grains from the sink and any of the $n+d$ topplings of either $T_K$ or $T_I$ returns a grain to the sink, 
all $\left(v_s^{[i]}\mod (n+d+1)\right)_{i=0,\ldots, n}$ are distinct so all configurations $(v^{[i]})_{i=0,\ldots, n}$ are also distinct.
Since the operator $T_W$ contributes $n+d+1$ grains to the sink, the $T_W$-equivalence preserves the number of grains at the sink modulo $(n+d+1)$.
This shows that each $(w^{[i]}_s)_{i=0,\ldots, n}$ is distinct, and hence all $(w^{[i]})_{i=0,\ldots, n}$ are distinct (quasi-stable and non-negative configurations). 

The remainder of the proof shows that there are no other quasi-stable and non-negative configurations that are toppling-and-permuting equivalent to $u$.
If $w$ is a quasi-stable non-negative configuration toppling-and-permuting equivalent to $u$ then it can be written
\[ w = T_s^{\alpha_s}T_K^{\alpha_K}T_I^{\alpha_I}.v, \]
where $v = \mathsf{rec}(u)$ is the sorted recurrent configuration in that equivalence class.

Using Proposition~\ref{lem:identity-operator} we may write:
  \[ T_W = T_K^{|K|+1}T_I^{[I]} = T_KT_s^{-1}\left(T_sT_K^{|K|}T_I^{|I|}\right)=T_KT_s^{-1}.\] 
Thus $T^{s}=T_KT_W^{-1}$ and we can remove the $T_s$ operator in the description of
\[w = T_W^{-\alpha_s}T_K^{\alpha_s+\alpha_K}T_I^{\alpha_I}v.\]

Now let us define $q_K$ and $r_K$ by the Euclidean division of $\alpha_s+\alpha_K$ by $n+1$ : $\alpha_s+\alpha_K = q_K(n+1)+r_K$.
Similarly, define $q_I$ and $r_I$ by the Euclidean division of $\alpha_I$ by $d$ : $\alpha_I = q_Id+r_I$.
This allows the following factorization of operators for powers of $T_W$ then $T_I^{d}$: 
\[ w = T_W^{-\alpha_s}T_K^{q_K(n+1)+r_K}T_I^{q_Id+r_I}.v=T_W^{-\alpha_s}(T_K^{n+1}T_I^d)^{q_K}(T_I^d)^{(q_I-q_K)}\left(T_K^{r_K}T_I^{r_I}\right). v,\]
 where we identify the new factors $T_W=T_K^{n+1}T_I^d$ so we have 
\[ w = T_W^{q_K-\alpha_s}(T_I^d)^{(q_I-q_K)}\left(T_K^{r_K}T_I^{r_I}\right). v.\]
If we restrict the identity to sub-configurations on the component $I$, then $T_W$ keeps it invariant so we may ignore the factor $T_W^{q_K-\alpha_s}$, hence
  \[ w^{[I]} = (T_I^d)^{(q_I-q_K)}\left(T_K^{r_K}T_I^{r_I}\right). v^{[I]}.\]

Since $r_I < |I|$, the value $v_{|I|-1}^{[I]}$ is not decremented by the operator $T_I^{r_I}$ hence
\[ 0 \leq w_{|I|-1-r_I}^{[I]} = v^{[I]}_{|I|-1}+r_K-(n+1)(q_I-q_K) \leq n,\]
where the bounds come from the assumption that $w^{[I]}$ is quasi-stable and non-negative.
As $0 \leq v_{|I|-1}^{[I]} \leq n$ and $0 \leq r_K \leq n$, it follows that $0 \leq v_{|I|-1}^{[I]} + r_K \leq 2n$ and we compare this value to $n$: 
\begin{itemize}
\item if $v^{[I]}_{|I|-1}+r_K\leq n$, in particular when $r_K=0$, then  $(q_I-q_K)=0$;
\item otherwise $v^{[I]}_{|I|-1}+r_K > n$ and $(q_I-q_K)=1$.
\end{itemize}
By contradiction, we want to exclude the cases $(q_I-q_K)=1$ and $r_I > 0$.
In those cases, we have
\[ w^{[I]} =T_K^{r_K}T_I^{d+r_I}.v^{[I]}.\]
Then vertex $v_0^{[I]}$ topples twice while going from $v^{[I]}$ to $w^{[I]}$ so
\[ 0 \leq w_{|I|-r_I} = v_0^{[I]}+r_K-2(n+1) \leq n, \]
where the inequalities come from the assumption that $w^{[I]}$ is quasi-stable and non-negative.
This gives the expected contradiction since $v_0^{[I]}+r_k \leq 2n $ because both summands are lower than $n$.

Hence our analysis shows that
\[ w^{[I]} = T_K^{a}T_I^{b}.v^{[I]}\]
where $a \in \{0,\ldots, n\}$,  $b \in \{0,\ldots, d\}$, and more generally there exists $c\in \mathbb{Z}$ such that
\[ w = T_W^cT_K^aT_I^b.v.\]
\begin{itemize}
\item If $a = 0$ then $w = T_W^cT_I^b.v$ and $w^{[I]} = T_I^b.v^{[I]}$ so $b=0$. Otherwise $b\geq 1$ and the toppling of $v_0^{[I]}$ means that $w_{|I|-b}^{[I]}=v_0^{[I]}-(n+1) < 0$ so $w$ would not be non-negative.
   This gives $ w= T_W^c.v$ and, since  $\mathsf{weight}(w)=c+\mathsf{weight}(v)$ and $\mathsf{weight}(v)=0$, by Proposition~\ref{lem:weight_description_of_quasistables} it follows that $w^{K}$ is quasi-stable and non-negative if and only if $c=0$, and so $w=v$.
\item If $a \geq 1$, we apply once the relation $T_K=T_WT_s$ to obtain $w= T_W^{c+1}T_sT_{K}^{a-1}T_I^b.v$.
   We will identify the possible choices of $a$ and $b$ via the quasi-stability and non-negativity of $w^{[I]} = T_sT_{K}^{a-1}T_I^b.v^{[I]}.$
   In the sequence of operators from Dhar's criterion, \[T_sT_I^{k_0}\prod_{i=1}^n (T_KT_I^{k_i}),\] 
the $n$ prefixes of operators before an occurrence of operator $T_K$ and the appropriate value of $c$ defined by the weight, gives us the previously identified 
$n+1$ quasi-stable and non-negative configurations except the recurrent configuration $v$ that was found in the first case.  
Let $((a_k,b_k))_{k=1,\ldots, n}$ the $n$ possibles values identified via Dhar's criterion.
For other values of $(a,b)$, either $T_K$ operators are missing, in which case $w^{[I]}$ is not non-negative,  or $T_I$ operators are missing and in this case $w^{[I]}$ is not quasi-stable.
More precisely, the decomposition with respect to Dhar's criterion giving priority to toppling vertices in the independent component $I$ leads to $((a_k=k,b_k))_{k=1,\ldots, n}$.
For a possible $(a,b)$ pair, we discuss the value of $b$ with respect to $b_a$:
\begin{itemize}
\item If $b < b_a$, then this means one performs the Dhar criterion but without some expected topplings of unstable vertices in the independent component $I$. Here $w^{[I]}$ will not be (quasi-)stable and such a choice for $b$ is not possible.
\item If $b > b_a$, then this signifies the forcing of a toppling of some unstable vertices in the independent component.  Here $w^{[I]}$ will not be non-negative and such a choice for $b$ is not possible.
\item If $b=b_a$, then we recover one of the $n$ expected quasi-stable and non-negative sub-configurations $w^{[I]}$ of this case and use the weight of $T_sT_K^{a-1}T_I^b.v$ to identify $c+1=\mathsf{weight}(T_sT_K^{a-1}T_I^b.v)$.
This means we obtain the single power of $T_W$ leading to a quasi-stable and non-negative $w^{[K]}$ with the same $w^{[I]}$, and hence a quasi-stable and non-negative configuration. 
\end{itemize}
\end{itemize}
 \end{proof}

\begin{proof}[Proof of Theorem~\ref{ourcyclelemma}]
By Proposition~\ref{lem:counting-quasistable-nonnegative}, we have that every sorted recurrent configuration on $S_{n,d}$ is one of $n+1$ sorted quasi-stable and non-negative configurations that are toppling-and-permuting equivalent to it.
The set of sorted  quasi-stable and non-negative configurations admits a partition into equivalence classes modulo toppling-and-permuting equivalence.
The number of sorted quasi-stable and non-negative configurations on $S_{n,d}$ is 
easily seen to be ${2n+d \choose n}{n+d \choose n}$.
\end{proof}

\section{Conclusion}\label{section:conclusion}
There are several outstanding problems from the work presented in this paper. 
A conjecture that we have been unable to resolve is equality of the $q,t$-CTI and the $q,t$-ITC polynomials. 
While we have been able to establish this for some special cases, the general case remains elusive. 
\begin{conjecture}\label{conj:ITCCTIsymmetry}
$\displaystyle \notrepoly_{n,d}^{CTI}(q,t) = \notrepoly_{n,d}^{ITC}(q,t).$
\end{conjecture}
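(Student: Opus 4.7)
The plan is to establish Conjecture 7.1 by showing that $\notrepoly_{n,d}^{CTI}(q,t)$ is also equal to the $q,t$-Schr\"oder polynomial. Given Corollary~\ref{corol:itc}, it suffices to construct a bijection $\stc_{CTI}: \Schroder_{n,d} \to \ClassOneParams{n}{d}$ translating area and Schr\"oder bounce to level and shifted $\topplebounce_{CTI}$ in the sense that $\area(w) = \level(\stc_{CTI}(w))$ and $\schbounce(w) = \topplebounce_{CTI}(\stc_{CTI}(w)) - (n+d)$. The area half of this identity follows from Theorem~\ref{lem:itc-area} (level is intrinsic to the configuration and independent of any toppling protocol), so the work is concentrated in the bounce half.

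First, I would carefully compare the CTI and ITC bounce sequences $(p_1,q_1,p_2,q_2,\ldots,p_k,q_k)$ and $(q'_1,p'_1,\ldots,q'_{k'},p'_{k'})$ associated to the same configuration $c$. On the sawtooth polyomino side, these are realized as two bounce paths starting from $(n,d)$ which differ only in whether the first move is $\nw$ or $\ss$. My goal would then be to construct an area-preserving involution $\psi:\wonkypoly_{n+1,d} \to \wonkypoly_{n+1,d}$ that exchanges the CTI and ITC bounce statistics, i.e.\ $\itcbounce(\psi(P))=\ctibounce(P)$ and $\area(\psi(P))=\area(P)$. If $\psi$ exists, then composing with $\fcp_{n,d}$ gives the required bijection and hence equality of the two polynomials.

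A natural starting point for constructing $\psi$ is to pass through the classification of ITC-toppling sequences in Theorem~\ref{lem:description-ITCnm}. One would first establish an analogous classification for CTI-toppling sequences, and then try to identify a combinatorial map on the indexing data that realizes the desired bounce swap while refining to a bijection on configurations with matching level. This refinement could, in principle, be guided by the Gaussian binomial formula in Proposition~\ref{prop:countingFqt}: if one can derive a parallel sum formula for $\notrepoly_{n,d}^{CTI}(q,t)$ with matching $q$-enumerators, equality follows termwise.

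The main obstacle is the structural asymmetry between the clique and independent components: a clique vertex has degree $n+d$ while an independent vertex has degree $n+1$, and the clique contributes $\binom{n}{2}$ internal edges that the independent part lacks. This asymmetry is precisely why CTI and ITC produce genuinely different dynamics at the configuration level, and it rules out naive geometric involutions (reflection across the main or anti-diagonal, rotation by $180^\circ$) on sawtooth polyominoes, since the upper path uses $\{\nn,\se\}$ steps while the lower path uses $\{\nn,\ee\}$ steps. I expect the correct $\psi$, if it exists at a purely polyomino level, to be rather subtle; an alternative and possibly cleaner route is to seek a $q,t$-symmetric formula proof via symmetric functions, using the framework developed in D'Adderio et al.~\cite{ddillw}, which already connects the split graph sandpile to $\langle \nabla e_{n+d}, e_n h_d\rangle$ without reference to lattice paths, and hoping that the CTI statistic admits a parallel symmetric function interpretation.
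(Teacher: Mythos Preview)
This statement is presented in the paper as an open \emph{conjecture}, not a theorem: the authors explicitly write in Section~\ref{section:conclusion} that they ``have been unable to resolve'' it and that ``the general case remains elusive.'' There is therefore no proof in the paper to compare against.

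Your proposal is not a proof but a research outline. You describe several plausible strategies---constructing an area-preserving involution $\psi$ on sawtooth polyominoes that swaps the two bounce statistics, deriving a CTI analogue of the sum formula in Proposition~\ref{prop:countingFqt}, or seeking a symmetric-function interpretation for the CTI statistic---but you do not carry any of them out. Every key step is stated conditionally (``If $\psi$ exists\ldots'', ``if one can derive\ldots'', ``hoping that the CTI statistic admits\ldots''), and you yourself identify the central obstruction (the degree asymmetry between clique and independent vertices) without overcoming it. In particular, the existence of the involution $\psi$ is precisely the content of the conjecture (or of the equivalent Conjecture~\ref{prop:itc-cti}), so asserting that constructing it would settle the question is not progress toward a proof.

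One small technical remark: your claim that ``the area half of this identity follows from Theorem~\ref{lem:itc-area}'' is only valid if $\stc_{CTI}$ factors through $\phi\circ\mu$ composed with a level-preserving map on configurations. Theorem~\ref{lem:itc-area} establishes $\area(w)=\level(\phi(\mu(w)))$ for that specific bijection, not for an arbitrary $\stc_{CTI}$; you implicitly acknowledge this later when you require $\psi$ to be area-preserving, but the earlier sentence overstates what is available.
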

Since, in Corollary~\ref{corol:itc}, we have established equality of the $q,t$-Schr\"oder and $q,t$-ITC polynomials, Conjecture~\ref{conj:ITCCTIsymmetry} is equivalent to:
\begin{conjecture}\label{thm:schroeder}
$\displaystyle \notrepoly_{n,d}^{CTI}(q,t) = \sum_{w \in \Schroder_{n,d}} q^{\area(w)}t^{\schbounce(w)}$.
\end{conjecture}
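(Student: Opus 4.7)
The plan is to prove the conjecture by establishing the equivalent identity $\notrepoly_{n,d}^{CTI}(q,t) = \notrepoly_{n,d}^{ITC}(q,t)$, since Corollary~\ref{corol:itc} already identifies $\notrepoly_{n,d}^{ITC}(q,t)$ with the $q,t$-Schr\"oder polynomial. Because the two sums range over the same set $\ClassOneParams{n}{d}$ and the first variable (level) is independent of the toppling convention, this reduces to exhibiting a bijection $\Psi:\ClassOneParams{n}{d} \to \ClassOneParams{n}{d}$ with $\level(\Psi(c))=\level(c)$ and $\topplebounce_{CTI}(\Psi(c))=\topplebounce_{ITC}(c)$. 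Equivalently, using the polyomino dictionary of Section~\ref{sectionfive} and the CTI/ITC bounce-path descriptions, the task is to find an area-preserving bijection $\widetilde\Psi:\wonkypoly_{n+1,d}\to\wonkypoly_{n+1,d}$ sending the CTI bounce sequence of $P$ to the ITC bounce sequence of $\widetilde\Psi(P)$.

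My first step would be an experimental pass through Figure~\ref{secfig} (and analogous tables for $(n,d)\in\{(2,3),(3,2),(3,3)\}$), recording for each polyomino both $\ctiBounce(P)$ and $\itcBounce(P)$, and sorting by area. On each area-level the multiset of CTI bounce compositions should equal the multiset of ITC bounce compositions; matching them canonically (for instance, by lexicographic order of the bounce sequence, or by attempting to characterize the fixed points) should reveal a pattern. The natural candidate for $\widetilde\Psi$ is an involution that operates inside the ``bouncing region'' bounded by the CTI path, rearranging the teeth of the upper path and the staircase of the lower path in a way reminiscent of the Haglund--Haiman manipulations for the $q,t$-Catalan symmetry and of the zeta map of Haglund--Loehr. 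One concrete attempt: read $\ctiBounce(P)=(p_1,q_1,p_2,q_2,\ldots)$ and construct $\widetilde\Psi(P)$ to be the unique sawtooth polyomino whose ITC bounce sequence is $(q_1,p_1,q_2,p_2,\ldots)$ and whose area in each strip between consecutive ITC peaks equals the area of $P$ in the corresponding CTI strip.

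The main obstacle will be verifying that such $\widetilde\Psi$ is well-defined and preserves area globally, not just strip by strip; the interaction between the diagonal upper-path steps and the rectangular lower-path steps means that area can shift between strips under even small local rearrangements, and the bijection must compensate. A secondary obstacle is that the analogous $q,t$-symmetry on Schr\"oder paths itself has no known bijective proof, which suggests the direct combinatorial approach may not close. Accordingly, my fallback would be algebraic: derive a sum formula for $\notrepoly_{n,d}^{CTI}(q,t)$ by partitioning $\ClassOneParams{n}{d}$ according to the CTI-bounce sequence (following the template of Theorem~\ref{lem:description-ITCnm} and Proposition~\ref{prop:countingFqt}), express the result as a sum of products of Gaussian binomials, and then compare with the corresponding ITC sum term-by-term using $q$-binomial manipulations or a sign-reversing involution on pairs of indexing sequences.
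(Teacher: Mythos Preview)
The paper does not prove this statement: it is explicitly listed as an open conjecture in Section~\ref{section:conclusion}, equivalent (via Corollary~\ref{corol:itc}) to Conjecture~\ref{conj:ITCCTIsymmetry}, and the authors state that the general case ``remains elusive.'' So there is no paper proof to compare against.

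Your proposal is not a proof but a research plan, and you appear to be aware of this. The reduction to Conjecture~\ref{conj:ITCCTIsymmetry} and then to a level-preserving bijection matching $\topplebounce_{CTI}$ with $\topplebounce_{ITC}$ is exactly the equivalent Conjecture~\ref{prop:itc-cti} that the paper also leaves open. Your concrete candidate for $\widetilde\Psi$---take the CTI bounce sequence $(p_1,q_1,p_2,q_2,\ldots)$ and build the polyomino with ITC bounce sequence $(q_1,p_1,q_2,p_2,\ldots)$ and matching strip areas---does not work as stated: the swapped sequence need not lie in $\mathsf{ITC}_{n,d}$ (Theorem~\ref{lem:description-ITCnm} forces $a_1,\ldots,a_{k-1}>0$, whereas the $p_i$ in a CTI sequence may vanish, e.g.\ $p_1=0$ in Figure~\ref{hjkl3}(a)), and even when it does, the hexagon counts from Lemma~\ref{lem:q-binomial-in-hexagon} depend on the interleaving of $a_{i-1},b_i,a_i$ in a way that a simple swap does not respect. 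Your fallback of deriving a $q$-multinomial sum for the CTI side and matching it term-by-term to Proposition~\ref{prop:countingFqt} is the more promising route, but you have not carried it out, and the paper gives no indication that such a matching is straightforward.
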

We fully expect there to be an explanation for the above correspondences that is realizable in a sandpile setting.
\begin{conjecture}\label{prop:itc-cti}
There exists a bijection $\Psi: ~ \ClassOneParams{n}{d}\mapsto \ClassOneParams{n}{d} $ such that for any configuration $c\in  \ClassOneParams{n}{d}$, 
\begin{equation*}
\displaystyle (\height(c),\topplebounce_{CTI}(c))=(\height(\Psi(c)),\topplebounce_{ITC}(\Psi(c))).
\end{equation*}
\end{conjecture}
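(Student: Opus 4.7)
The plan is to establish the conjecture by passing through the sawtooth polyomino representation. By Theorem~\ref{euroone} and its ITC analogue, $\height$ is (up to a constant depending only on $n,d$) the area of the associated polyomino, while $\topplebounce_{CTI}$ and $\topplebounce_{ITC}$ equal $\ctibounce$ and $\itcbounce$ respectively. So it suffices to build an area-preserving involution (or more generally bijection) $\widetilde{\Psi}\colon \wonkypoly_{n+1,d}\to \wonkypoly_{n+1,d}$ satisfying $\ctiBounce(P) = \itcBounce(\widetilde{\Psi}(P))$, and then define $\Psi := \fcp_{n,d}^{-1}\circ \widetilde{\Psi}\circ \fcp_{n,d}$.

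First I would carefully characterize the possible CTI-toppling sequences $\bouncepath_{CTI}(c) = (p_1,q_1,\ldots,p_t,q_t)$, following the template of Theorem~\ref{lem:description-ITCnm}, and compare this class with the ITC-toppling sequences of Theorem~\ref{lem:description-ITCnm}. The two classes differ only in boundary conventions (which entry of the pair is allowed to vanish at the ends and how the total length interacts with the sink), so I expect a simple length-preserving transformation on sequences, essentially a reindexing together with a boundary adjustment, that converts one family to the other while preserving $\sum_i i(p_i+q_i)$. With this in hand, the task reduces to matching the \emph{fibers}: the set of configurations with a given CTI-sequence, weighted by $q^{\level}$, must equal the set with the corresponding ITC-sequence, weighted by $q^{\level}$.

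I expect the fiber-matching step to be the main obstacle. For ITC, Proposition~\ref{prop:countingFqt} expresses the fiber sum as a product of $q$-multinomials of the form $\binom{a_i+b_i+a_{i-1}-1}{a_i,b_i,a_{i-1}-1}_q$. I would prove an analogous closed form for the CTI generating function over a fixed CTI-sequence and then construct, block by block along the bounce, an explicit $q$-multinomial identity realizing the bijection; geometrically each block corresponds to one ``turn'' of the bounce path and one zig-zag of the upper/lower paths of the polyomino between consecutive touches by the bounce. The symmetric roles of clique and independent parts in the two bounce paths suggest the block bijection is a local reflection swapping the $\nw$-first and $\ss$-first segments of the polyomino around each bounce point.

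As a fallback, if the local bijection proves intractable, I would aim for a symmetric-function proof: by Corollary~\ref{corol:itc} we already have $\notrepoly_{n,d}^{ITC}(q,t)=\langle\nabla e_{n+d},e_nh_d\rangle$, so it would suffice to identify $\notrepoly_{n,d}^{CTI}(q,t)$ with the same scalar product. One route is to define a CTI-parking analog on $S_{n,d}$ together with a $\mathsf{dinv}$-like statistic in the spirit of Aval--D'Adderio--Dukes--Hicks--Le Borgne~\cite{aadhl}, and invoke a shuffle-theorem style interpretation. This would yield the identity of polynomials and, together with the ITC case, force the existence of a bijection $\Psi$ by abstract counting in each $(q,t)$-bidegree; however, such a $\Psi$ would not be canonical, so the explicit block construction remains the preferred route.
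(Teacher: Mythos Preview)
The paper does not prove this statement: it is explicitly listed as an open conjecture in Section~\ref{section:conclusion}, alongside Conjectures~\ref{conj:ITCCTIsymmetry} and~\ref{thm:schroeder}, with the authors saying ``A conjecture that we have been unable to resolve is equality of the $q,t$-CTI and the $q,t$-ITC polynomials.'' So there is no proof in the paper to compare your proposal against.

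As for your proposal itself, it is a plan rather than a proof, and you are candid about where the difficulty lies. Your reduction to sawtooth polyominoes via $\fcp_{n,d}$ is sound, and the idea of matching fibers over fixed bounce sequences is natural. But the step you flag as ``the main obstacle'' --- producing a CTI analogue of Proposition~\ref{prop:countingFqt} and then an explicit $q$-multinomial-preserving block bijection --- is precisely the missing idea, and nothing in your outline indicates how to carry it out. The ``simple length-preserving transformation on sequences'' you anticipate between CTI- and ITC-toppling sequences is not obviously weight-preserving for $\sum_i i(p_i+q_i)$ once you account for the different boundary conventions, and even if it were, equality of the $q$-generating functions over corresponding fibers does not follow from anything established in the paper. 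Your fallback route via symmetric functions would, if it succeeded, prove Conjecture~\ref{conj:ITCCTIsymmetry} and hence the \emph{existence} of some $\Psi$ by matching coefficients bidegree-by-bidegree; but identifying $\notrepoly_{n,d}^{CTI}(q,t)$ with $\langle\nabla e_{n+d},e_nh_d\rangle$ is itself Conjecture~\ref{thm:schroeder}, which the paper also leaves open. In short, your outline correctly locates the problem but does not resolve it, which is consistent with the paper's own assessment that the conjecture remains open.
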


\appendix 
\section{A proof of Proposition~\ref{prop:countingFqt}}
\begin{proof}[Proof of Proposition~\ref{prop:countingFqt}]
Theorem~\ref{lem:description-ITCnm} describes the set $\mathsf{ITC}_{n,d}$ of all the possible ITC-toppling sequences 
as being in bijection with certain pairs 
$\left[b=(b_1,\ldots,b_k) , a=(a_1,\ldots,a_{k-1})\right]$ where $b\isweakcomp_k d $ 
and $a \isweakcomp_{k-1} n$.

Lemma~\ref{lem:between-lower-upper} then shows that any ITC-toppling sequence defines 
two extremal Schr\"oder words $w^{\mathsf{lower}}$ and $w^{\mathsf{upper}}$ such that 
all Schr\"oder paths admitting this ITC-toppling sequence are the Schr\"oder words $w'$ 
such that $w^{\mathsf{lower}} \leq w' \leq w^{\mathsf{upper}}$. 
The Schr\"oder words satisfying these inequalities correspond to those Schr\"oder paths 
geometrically enclosed by a sequence of (possibly degenerate) hexagons that are defined 
by the pair $(w^{\mathsf{lower}},w^{\mathsf{upper}})$, 
instead of by the sequences of rectangles as in the usual Dyck word case. 

This can be seen in Figure~\ref{fig:reformulate-bounce}. 
The two extremal Schr\"oder words are illustrated by the two upper and lower paths coloured orange and green, respectively.  They defines a sequence of two hexagons. 
The first hexagon starts at (0,2) and ends at (4,6). 
It consists of two (0,1) steps, two (1,0) steps, and two (1,1) steps. 
The second hexagon starts at (4,7) and ends at (9,9) and consists of two (1,1) steps and three (1,0) steps. 

Finally, Lemma~\ref{lem:q-binomial-in-hexagon} shows that the set of Schr\"oder words contained between two extremal Schr\"oder words
may be obtained from $w^{\mathsf{lower}}$ whose minimal (in $q$) weight is $\prod_{i=1}^{k} q^{{a_i\choose 2}}t^{(i-1)(b_i+a_i)}$ by 
commutation of steps inside the same hexagon.
Since, inside each hexagon having side lengths $a_{i-1}-1$,$b_{i}$ and $a_{i}$ (for a visual aid see see Figure~\ref{fig:reformulate-bounce}), 
every commutation is possible within each hexagon, 
it appears that the sum of all possible commutations leads to the $q$-multinomial coefficient 
${a_{i-1}-1+b_i+a_i \choose  a_{i-1}-1, b_i, a_i }_q$ for this hexagon.
Since two commutations in distinct hexagons are independent we obtain the expected formula.
\end{proof}

Every ITC-toppling sequence $((b_1,\ldots,b_k), (a_1,\ldots,a_k))$ defines two extremal Schr\"oder paths:
the \emph{lower \textup{(}Schr\"oder\textup{)} path} 
\begin{equation*}
w^{\mathsf{lower}} := \mu\left(H^{b_1}U^{a_1}\left(\prod_{i=2}^{k}D^{a_{i-1}}H^{b_i}U^{a_i}\right)\right),
\end{equation*}
and
the \emph{upper \textup{(}Schr\"oder\textup{)} path} 
\begin{equation*}
w^{\mathsf{upper}} := \mu\left(U^{a_1}H^{b_1}\left(\prod_{i=2}^k DU^{a_i}H^{b_i}D^{a_{i-1}-1}\right)\right).
\end{equation*}
Recall that $\mu$ is the word-reversal operator and for each of the extremal paths the non-commutative product of words in each is given in increasing order of the indices. 
In Figure~\ref{fig:reformulate-bounce}, the lower path $w^{\mathsf{lower}}$ is illustrated in green, sometimes hiding the upper path $w^{\mathsf{upper}}$ illustrated in orange.

\begin{lemma}\label{lem:between-lower-upper}
Let $w\in \Schroder_{n,d}$.
The Schr\"oder paths $w'$ for which $w^{\mathsf{lower}} \leq w' \leq w^{\mathsf{upper}}$ 
are precisely those Schr\"oder paths $w'$ for which $\bouncepath_{ITC}(\phi(\mu(w'))) = \bouncepath_{ITC}(\phi(\mu(w)))$.
\end{lemma}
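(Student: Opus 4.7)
The plan is to combine the geometric description of the ITC bounce path on Schr\"oder paths, as reformulated in the proof of Theorem~\ref{lem:itc-bounce} (and visualized in Figure~\ref{fig:reformulate-bounce}), with a combinatorial decomposition of each Schr\"oder path into the hexagonal regions delimited by the peaks $\PeakSchroder(1),\ldots,\PeakSchroder(k)$ of its bounce path.

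First, I will establish that the ITC-toppling sequence $\bouncepath_{ITC}(\phi(\mu(w')))$ depends only on the positions of the peak points through which the Schr\"oder bounce path passes. The proof of Theorem~\ref{lem:itc-bounce} shows that these peaks are at the explicit coordinates
\[ \PeakSchroder(i) = \left( \sum_{j=i+1}^{k}(p'_j + q'_j),\ \sum_{j=i}^{k} p'_j + \sum_{j=i+1}^{k} q'_j \right), \]
and conversely the sequence of peaks uniquely reconstructs $((q'_i),(p'_i))$ by taking successive coordinate differences. Hence two Schr\"oder paths share the same ITC-toppling sequence if and only if their bounce paths share the same sequence of peak points.

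Second, I will describe the set of Schr\"oder paths with prescribed peaks. Since the bounce path meets the Schr\"oder path only at the peaks and otherwise stays strictly between them, the Schr\"oder path is confined, between consecutive peaks, to a hexagonal region bounded by two anti-diagonals (sides of the bounce path), one vertical edge, and one horizontal edge. A step-counting argument inside the $i$-th hexagon shows that the Schr\"oder path traverses it using exactly $a_i$ up-steps, $b_i$ horizontal steps, and $a_{i-1}-1$ down-steps, with the first and last hexagons treated as degenerate boundary cases. Crucially, any ordering of those prescribed multiplicities of steps inside each hexagon still yields a valid Schr\"oder path having exactly the same peaks, since the global Schr\"oder inequality decouples into the per-hexagon regional constraints.

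Third, I will identify the two extremal paths under the partial order $\leq$. Within each hexagon, placing the horizontal and up-moving steps before the down-moving steps pushes the path as close to the diagonal as possible, giving the $\leq$-minimal element of that region; placing the down-step first and delaying the up-steps pushes the path as far from the diagonal as possible, giving the $\leq$-maximal element. Concatenating these per-region extremes and then applying $\mu$ (which reverses the word and swaps $U \leftrightarrow D$) will be checked to reproduce exactly the non-commutative products
\[ w^{\mathsf{lower}} = \mu\Bigl(H^{b_1}U^{a_1}\prod_{i=2}^{k} D^{a_{i-1}}H^{b_i}U^{a_i}\Bigr) \quad\text{and}\quad w^{\mathsf{upper}} = \mu\Bigl(U^{a_1}H^{b_1}\prod_{i=2}^{k} DU^{a_i}H^{b_i}D^{a_{i-1}-1}\Bigr). \]
With this in hand, both directions of the lemma follow: a path $w'$ has the same peaks as $w$ iff it lies between these two extremes in each region iff $w^{\mathsf{lower}} \leq w' \leq w^{\mathsf{upper}}$.

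The main obstacle will be the careful bookkeeping of the boundary regions, namely the first hexagon (where there is no preceding down-step block) and the last hexagon (where the trailing $D^{a_{k-1}-1}$ factor accounts for the final descent to the axis), together with verifying that after the mirror $\mu$ the extremal per-region arrangements literally coincide with the stated products rather than with a rotated variant. Once that bookkeeping is in place, no further combinatorial input is needed.
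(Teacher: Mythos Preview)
Your proposal is correct but takes a genuinely different route from the paper's own proof. The paper argues entirely on the sandpile-configuration side: it writes down the explicit configurations $c^{\mathsf{lower}}$ and $c^{\mathsf{upper}}$ corresponding to the two extremal words (the former being the configuration constructed in the proof of Theorem~\ref{lem:description-ITCnm}), checks directly that both have the prescribed ITC-toppling sequence, and then observes that for any $w'$ sandwiched between them the instability of each vertex at each loop is guaranteed by $w^{\mathsf{lower}}\leq w'$ while its stability before that loop is guaranteed by $w'\leq w^{\mathsf{upper}}$; for $w'$ outside the interval it locates the first discrepancy and shows some vertex topples either too late or too early. Your approach instead stays on the Schr\"oder-path side, using the peak formula $\PeakSchroder(i)$ from Theorem~\ref{lem:itc-bounce} to reduce the question to ``which paths have a prescribed peak sequence'' and then parametrising those paths by free shuffles inside each hexagon. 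The paper's argument is more self-contained and closer to the sandpile semantics; your argument is more geometric and has the advantage of making the hexagonal decomposition used in Lemma~\ref{lem:q-binomial-in-hexagon} and Proposition~\ref{prop:countingFqt} appear naturally rather than as a separate observation. The one place you should be careful is the claim that \emph{any} rearrangement of the $a_i$ $U$'s, $b_i$ $H$'s and $a_{i-1}-1$ $D$'s inside a hexagon leaves the bounce-path peaks unchanged: this is true, but it requires checking that the bounce path, when it moves west (and diagonally through the $b_i$ $H$-antidiagonals) from the diagonal point below $\PeakSchroder(i-1)$, cannot meet a $U$ step of $w'$ before reaching $\PeakSchroder(i)$, which amounts to verifying that the upper boundary of the hexagon really is $w^{\mathsf{upper}}$ in that region.
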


\begin{proof}
Let $w\in \Schroder_{n,d}$ and $c=\phi(\mu(w)) \in \ClassOneParams{n}{d}$ be its corresponding recurrent configuration.
Suppose that $\bouncepath_{ITC}(c)= (b_1,a_1,\ldots,b_k,a_k)$ so that the corresponding ITC-toppling sequence is
\begin{equation*}
ITC(w)=\left[(b_1,\ldots,b_k),(a_1,\ldots,a_k)\right].
\end{equation*}
The recurrent configuration that corresponds to $w^{\mathsf{lower}}$ is the configuration stated in the proof of Theorem~\ref{lem:description-ITCnm}.
Moreover, this recurrent configuration for $w^{\mathsf{lower}}$ also has ITC-toppling sequence 
$\left[(b_1,\ldots,b_k),(a_1,\ldots,a_k)\right]$. 

For $w^{\mathsf{upper}}$ the situation is similar.
This is seen by considering the rewriting rule 
$D^{\alpha-1}H^{\beta}U^{\gamma}\longrightarrow U^{\gamma}H^{\beta}D^{\alpha-1}$ 
in every hexagon (between the paths $w^{\mathsf{lower}}$ and $w^{\mathsf{upper}}$) that corresponds to vertices toppled during one loop iteration $(b_i,a_i)$. 
Compare to the $w^{\mathsf{lower}}$ case, the rewriting rule for vertices toppled during the $k$-th loop iteration leads to 
$\alpha=a_{k-1}$,$\beta=b_k$, $\gamma=a_k$. 
This means every clique vertex toppled during this loop has $(a_k-1)+b_k$ grains more and every independent vertex has $a_k-1$ grains more.
Hence $w^{\mathsf{upper}}$ corresponds to the configuration $c^{\mathsf{upper}}$ as follows:
\begin{align*}
c^{\mathsf{upper}}(v_i) :=& n+d - \left(\sum_{j=0}^{k-2} a_j+b_j\right) - b_{k-1}-1, 
\end{align*}
for all $n-(a_0+\ldots +a_k) \leq i < n-(a_0+\ldots a_{k-1})$ and $1\leq k\leq t$, and
\begin{align*}
c^{\mathsf{upper}}(w_i) :=& n+1 - (a_0+a_1+\ldots+a_{k-2}) -1,
\end{align*}
for all $d-(b_0+\ldots+b_k) \leq i < d-(b_0+\ldots + b_{k-1})$ and $1\leq k \leq t$.
Recall that $b_0:=0$ and $a_0:=1$.
From this it is immediate that $ITC(w^{\mathsf{upper}}) = ITC(w^{\mathsf{lower}}) = ITC(w)$.

Any $w'$ such that $w^{\mathsf{lower}} \leq w' \leq w^{\mathsf{upper}}$ must have $ITC(w') = ITC(w)$ since stability of vertices 
is preserved for $ w' \leq w^{\mathsf{upper}}$ while instability is guaranteed for $w^{\mathsf{lower}} \leq w'$.
For any $w'$ that does not satisfy $w^{\mathsf{lower}} \leq w' \leq w^{\mathsf{upper}}$, 
we may consider the last difference which is also the first encounter by the ITC-toppling sequence.
If $w^{\mathsf{lower}} \nleq w$ then, by inspection of the case of $w^{\mathsf{lower}}$ with at least one less grain, there exists one vertex that is not unstable when it ought to be due to a lack of grains.
Likewise, if $w \nleq w^{\mathsf{upper}}$ then, by inspection of the case of $w^{\mathsf{upper}}$ with at least one more grain, there exists one
vertex that is unstable during the previous loop iteration due to an excess of grains.

Since the ITC toppling sequences coincide precisely when $w^{\mathsf{lower}} \leq w' \leq w^{\mathsf{upper}}$, we may conclude 
that this is also precisely when $\bouncepath_{ITC}(\phi(\mu(w'))) = \bouncepath_{ITC}(\phi(\mu(w)))$.
\end{proof}

Between their points of intersection, the lower and upper paths form hexagons having parallel sides, and these opposite sides have side-lengths $a_{i-1}-1$, $b_i$, $a_i$.
Such hexagons become rectangles when $b_i=0$, parallelograms when $a_{i-1}=1$, and horizontal lines when both $b_i=0$ and $a_{i-1}=1$.
In what follows, recall that the shuffle operator $\shuffle$ of two sequences is the set of all sequences formed from entries in both, and for which all entries appear in their original order.
E.g. 
\begin{equation*}
a_1a_2 \shuffle b_1b_2 = \{ a_1a_2b_1b_2, ~ a_1b_1a_2b_2, ~ a_1b_1b_2a_2, ~ b_1a_1a_2b_2, ~ b_1a_1b_2a_2, ~b_1b_2a_1a_2\}.
\end{equation*}

\begin{lemma}\label{lem:q-binomial-in-hexagon}
The generating function for a factor in a Schr\"oder path enclosed within a hexagon $\Hex$ defined by 
$h^{\mathsf{lower}}=D^aH^bU^c $ and $h^{\mathsf{upper}}=U^cH^bD^a$ 
according to the area enclosed in the same hexagon is given by the q-multinomial coefficient
\begin{equation*}
\sum_{w \in D^a\shuffle H^b \shuffle U^c} q^{\mathsf{area}_{\Hex}(w)} = { a+b+c \choose a,b,c }_q.
\end{equation*}
\end{lemma}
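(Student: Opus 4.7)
The plan is to reduce the identity to a classical combinatorial interpretation of the $q$-multinomial coefficient by identifying $\area_{\Hex}(w)$ with an inversion statistic on the word $w$.

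Equip the alphabet $\{D,H,U\}$ with the total order $D < H < U$ (the order in which the letters appear in the lower path $D^aH^bU^c$), and let $\mathrm{inv}(w)$ denote the number of inversions of $w$ under this order. The first step is to show $\area_{\Hex}(w) = \mathrm{inv}(w)$ for every $w \in D^a \shuffle H^b \shuffle U^c$. Both statistics vanish on the lower path $D^aH^bU^c$, and any shuffle word is reachable from $D^aH^bU^c$ by a sequence of adjacent-unequal-letter swaps, so it suffices to verify that each such swap changes both statistics by the same amount with the same sign. There are three cases ($DH \leftrightarrow HD$, $DU \leftrightarrow UD$, $HU \leftrightarrow UH$); in each, the region swept out by the swap is a unit parallelogram, and a direct inspection shows that exactly one lower triangle of the form $\{(i,j),(i{+}1,j),(i{+}1,j{+}1)\}$ lies in its interior, so $\area_{\Hex}$ toggles by $\pm 1$ in lockstep with $\mathrm{inv}$.

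The lemma then follows from the standard identity
\[
\sum_{w \in D^a \shuffle H^b \shuffle U^c} q^{\mathrm{inv}(w)} = \binom{a+b+c}{a,b,c}_q,
\]
which is proved by induction on $a+b+c$ via the $q$-multinomial recurrence obtained by conditioning on the terminal letter of $w$: a terminal $D$ contributes $b+c$ new inversions with preceding $H$'s and $U$'s, a terminal $H$ contributes $c$ new inversions with preceding $U$'s, and a terminal $U$ contributes none, giving
\[
\binom{a+b+c}{a,b,c}_q = q^{b+c}\binom{a+b+c-1}{a-1,b,c}_q + q^{c}\binom{a+b+c-1}{a,b-1,c}_q + \binom{a+b+c-1}{a,b,c-1}_q.
\]
The main obstacle is the geometric bookkeeping in the three swap cases, which amounts to a direct verification that each swept parallelogram contains exactly one lower triangle; once this is in place, the remainder is a textbook application of the $q$-multinomial recurrence.
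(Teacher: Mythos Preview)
Your proof is correct and follows essentially the same approach as the paper: both arguments identify $\area_{\Hex}(w)$ with the inversion number of $w$ under the order $D\prec H\prec U$ by checking that each of the three adjacent transpositions $DH\to HD$, $HU\to UH$, $DU\to UD$ adds exactly one lower triangle, starting from $\area_{\Hex}(D^aH^bU^c)=0$. The only difference is that the paper then invokes Foata--Han~\cite[Theorem~5.1]{foata} for the identity $\sum_w q^{\mathrm{inv}(w)}=\binom{a+b+c}{a,b,c}_q$, whereas you supply a self-contained inductive proof via the terminal-letter recurrence.
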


\begin{proof}
We notice by inspecting the diagrams below that each oriented 
commutation $XY\longrightarrow YX$ that generates all possible 
words in the (multi-)shuffle $D^a\shuffle H^b \shuffle U^c$ adds one enclosed lower half triangle.
  
\begin{center}
  \begin{minipage}{4cm}
    \begin{tikzpicture}[scale=0.5]
      \draw (-0.5,-0.5) grid (2.5,1.5);
      \draw[green!50!brown,line width=2] (0,0) -- (1,0) -- (2,1);
      \draw[orange,line width=2] (0,0) -- (1,1) -- (2,1);
      \fill[black,rounded corners] (0.9,0.1) -- (0.9,0.8) -- (0.2,0.1) -- cycle ;
      \draw node at (1,-1) {$\textcolor{green!50!brown}{DH} \longrightarrow \textcolor{orange}{HD}$};
    \end{tikzpicture}
  \end{minipage}
  \begin{minipage}{4cm}
    \begin{tikzpicture}[scale=0.5]
      \draw (-0.5,-0.5) grid (1.5,2.5);
      \draw[green!50!brown,line width=2] (0,0) -- (1,1) -- (1,2);
      \draw[orange,line width=2] (0,0) -- (0,1) -- (1,2);
      \fill[black,rounded corners] (0.9,1.1) -- (0.9,1.8) -- (0.2,1.1) -- cycle ;
      \draw node at (1,-1) {$\textcolor{green!50!brown}{HU} \longrightarrow \textcolor{orange}{UH}$};
    \end{tikzpicture}
  \end{minipage}
  \begin{minipage}{4cm}
    \begin{tikzpicture}[scale=0.5]
      \draw (-0.5,-0.5) grid (1.5,1.5);
      \draw[green!50!brown,line width=2] (0,0) -- (1,0) -- (1,1);
      \draw[orange,line width=2] (0,0) -- (0,1) -- (1,1);
      \fill[black,rounded corners] (0.9,0.1) -- (0.9,0.8) -- (0.2,0.1) -- cycle ;
      \draw node at (1,-1) {$\textcolor{green!50!brown}{DU} \longrightarrow \textcolor{orange}{UD}$};
    \end{tikzpicture}
  \end{minipage}
\end{center}
Since $\mathsf{area}_{\Hex}(w^{\mathsf{lower}}=D^aH^bU^c)=0$, we have a combinatorial interpretation of the $q$-multinomial coefficient.
The generating function for the area statistic over all hexagons corresponding to words consisting of $a$ $D$'s, $b$ $H$'s and $c$ $U$'s 
(in terms of the lower triangles between it and the path $D^aH^bU^c$) is seen to correspond to the number of inversions of such a word 
under the natural order $D\prec H\prec U$. 
(Recall that an inversion in the word $x_1x_2\ldots x_m$ is a pair $(i,j)$ with $i<j$ and $x_i \succ x_j$.)
Therefore, by Foata and Han~\cite[Theorem 5.1]{foata}, this is the $q$-multinomial coefficient ${ a+b+c \choose a,b,c }_q$. 
(This is also similar to Egge et al.~\cite[Lemma 1]{ehkk}.)
\end{proof}

\section{Number of sequence-pairs in exact sums for the $q,t$-Schr\"oder polynomials}
Let $c_k(m)$ be the number of compositions of the integer $m$ into $k$ strictly positive parts. 
Let $w_k(m)$ be the number of compositions of the integer $m$ into $k$ non-negative parts. 
A trivial counting argument shows that $c_k(m) = \tbinom{m-1}{k-1}$ and $w_k(m) = \tbinom{m+k-1}{m}$.

\begin{lemma}\label{itc:sequence:enumeration}
Let $\itc_{n,d,k}$ be the number of pairs of sequences $(a,b)$ in the set $\ITC_{n,d,k}$, and over which we sum in Equation~\ref{derycke:equation}.
We have $\itc_{n,d,1}=1$ and for $k\geq 2$, 
\begin{itemize}
\item $\itc_{n,d,k} =\displaystyle \binom{d+k-2}{d-1} \binom{n-1}{k-2} + \binom{d+k-1}{d} \binom{n-1}{k-1}$, and
\item $\itc_{n,d} =\displaystyle \sum_{k=1}^{n} \binom{d+k}{d} \binom{n-1}{k-1}$.
\end{itemize}
\end{lemma}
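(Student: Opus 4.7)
The plan is to apply the characterization of $\mathsf{ITC}_{n,d}$ from Theorem~\ref{lem:description-ITCnm} and count pairs by elementary composition-counting, then simplify using Pascal's identity.

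\textbf{Step 1: The case $k=1$.} By Theorem~\ref{lem:description-ITCnm} the only element of $\mathsf{ITC}_{n,d,1}$ is $[(d),(n)]$, so $\itc_{n,d,1}=1$.

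\textbf{Step 2: Count $\itc_{n,d,k}$ for $k\geq 2$.} Theorem~\ref{lem:description-ITCnm} describes the pairs as the $(b,a)$ with $b=(b_1,\ldots,b_k)\isweakcomp_k d$ and $a=(a_1,\ldots,a_k)$ satisfying $a_1,\ldots,a_{k-1}\geq 1$, $a_k\geq 0$, $\sum a_i=n$, plus the condition $a_k+b_k>0$. I would first count all pairs without the last condition: weak compositions of $d$ into $k$ parts number $\binom{d+k-1}{d}$, and after the substitution $a_i'=a_i-1$ for $i<k$ the sequences $a$ correspond to weak compositions of $n-(k-1)$ into $k$ parts, giving $\binom{n}{k-1}$. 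The excluded pairs have $a_k=b_k=0$, which corresponds to a composition of $n$ into $k-1$ positive parts ($\binom{n-1}{k-2}$) paired with a weak composition of $d$ into $k-1$ parts ($\binom{d+k-2}{d}$). Hence
\[ \itc_{n,d,k} = \binom{d+k-1}{d}\binom{n}{k-1} - \binom{d+k-2}{d}\binom{n-1}{k-2}. \]

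\textbf{Step 3: Rewrite in the stated form.} Using $\binom{n}{k-1}=\binom{n-1}{k-1}+\binom{n-1}{k-2}$ and $\binom{d+k-1}{d}-\binom{d+k-2}{d}=\binom{d+k-2}{d-1}$ (Pascal's identity), the expression in Step~2 becomes
\[ \binom{d+k-1}{d}\binom{n-1}{k-1} + \binom{d+k-2}{d-1}\binom{n-1}{k-2}, \]
which is the claimed formula.

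\textbf{Step 4: Sum over $k$ to obtain $\itc_{n,d}$.} Note the range of $k$: the requirement $a_1,\ldots,a_{k-1}\geq 1$ forces $k\leq n+1$. Using the alternative expression from Step~2 and reindexing with $j=k-1$ in the first sum and $j=k-2$ in the second,
\[ \sum_{k=2}^{n+1}\itc_{n,d,k} = \sum_{j=1}^{n}\binom{d+j}{d}\binom{n}{j} - \sum_{j=0}^{n-1}\binom{d+j}{d}\binom{n-1}{j}. \]
Applying $\binom{n}{j}=\binom{n-1}{j}+\binom{n-1}{j-1}$ to the first sum and cancelling, the two sums collapse to
\[ -1 + \sum_{j=1}^{n}\binom{d+j}{d}\binom{n-1}{j-1}, \]
where the $-1$ comes from the $j=0$ boundary term. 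Adding $\itc_{n,d,1}=1$ completes the proof of the closed form for $\itc_{n,d}$.

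\textbf{Main obstacle.} The counting itself is routine bookkeeping; the only care needed is the $a_k+b_k>0$ boundary condition (easy to forget) and getting the summation range $k\in\{1,\ldots,n+1\}$ right so that the telescoping in Step~4 produces the constant $1$ that cancels with $\itc_{n,d,1}$. No serious combinatorial obstacle arises.
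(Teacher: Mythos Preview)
Your proof is correct and follows essentially the same approach as the paper: both invoke the characterization of $\mathsf{ITC}_{n,d,k}$ from Theorem~\ref{lem:description-ITCnm} and reduce to elementary composition counting. The only cosmetic difference is that the paper obtains the two-term formula directly by splitting on whether $a_k=0$ (forcing $b_k>0$) or $a_k>0$, whereas you count all pairs and subtract those with $a_k=b_k=0$, then apply Pascal's identity to match the stated form; the summation arguments are likewise equivalent reindexings.
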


\begin{proof}
The set $\ITC_{n,d,k}$ is characterised in Theorem~\ref{lem:description-ITCnm}. We have
$\ITC_{n,d,1} = \{\left[(d),(n)\right]\}$ (and so $\itc_{n,d,1}=1$) while for $k\geq 2$:
\begin{equation*}
\ITC_{n,d,k} = 
    \left\{
    \left[(b_1,\ldots,b_k), (a_1,\ldots,a_{k})\right]
        ~:~
    \begin{array}{l}
        (b_1,\ldots,b_k) \isweakcomp d,\\
        (a_1,\ldots,a_{k}) \isweakcomp n,\\
        a_1,\ldots,a_{k-1}>0, \mbox{ and }\\
        b_k+a_k>0
    \end{array}
    \right\}.
\end{equation*}
We can condition now on whether $a_k$ is zero or positive to see that 
$\ITC_{n,d,k}$ can be partitioned into two sets and simplified as
\begin{enumerate}
\item[(i)] Those $[(a_1,\ldots,a_{k-1},0),~(b_1,\ldots,b_k)]$ wherein $(a_1,\ldots,a_{k-1}) \iscomp n$ and $(b_1,\ldots,b_k) \isweakcomp d$ with $b_k>0$. 
This is equivalent to  $(a_1,\ldots,a_{k-1}) \iscomp n$ and $(b_1,\ldots,b_k-1) \isweakcomp d-1$, 
of which there are $c_{k-1}(n) w_k(d-1) = \tbinom{n-1}{k-2} \tbinom{d-1+k-1}{d-1}$ many.
\item[(ii)] Those $[(a_1,\ldots,a_{k-1},a_k),~(b_1,\ldots,b_k)]$ wherein $(a_1,\ldots,a_{k}) \iscomp n$ and $(b_1,\ldots,b_k) \isweakcomp d$.
There are $c_k(n) w_k(d) = \tbinom{n-1}{k-1} \tbinom{d+k-1}{d}$ many of these.
\end{enumerate}
Using these expressions we have
\begin{align*}
\itc_{n,d} 
	= & 1+\sum_{k\geq 2}  \binom{d+k-2}{d-1} \binom{n-1}{k-2} + \binom{d+k-1}{d} \binom{n-1}{k-1}\\
	= & 1+\sum_{k=2}^{n+1} \binom{d+k-2}{d-1} \binom{n-1}{k-2} + \sum_{k=2}^n \binom{d+k-1}{d} \binom{n-1}{k-1}\\
	= & 1+\sum_{k=1}^{n} \binom{d+k-1}{d-1} \binom{n-1}{k-1} + \sum_{k=2}^n \binom{d+k-1}{d} \binom{n-1}{k-1}\\
	= & 1+ d+ \sum_{k=2}^{n} \binom{d+k-1}{d-1} \binom{n-1}{k-1} + \sum_{k=2}^n \binom{d+k-1}{d} \binom{n-1}{k-1}\\
	= & 1+ d+ \sum_{k=2}^{n} \binom{d+k}{d} \binom{n-1}{k-1}\\
	= & \sum_{k=1}^{n} \binom{d+k}{d} \binom{n-1}{k-1}.\qedhere
\end{align*}
\end{proof}

\begin{lemma}\label{ehkk:sequence:enumeration}
Let $\ehkk_{n,d,k}$ be the number of pairs of sequences $(\alpha,\beta)$ over which we sum in Equation~\ref{egge:equation}.
Then 
\begin{equation*}
\ehkk_{n,d,k} = \binom{d+k}{d} \binom{n-1}{k-1} \mbox{ and } 
\ehkk_{n,d} = \sum_{k=1}^n \binom{d+k}{d} \binom{n-1}{k-1}.
\end{equation*}
\end{lemma}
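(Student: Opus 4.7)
The plan is to read off the ranges of the two sequence variables directly from Equation~\eqref{egge:equation} and count each by a standard stars-and-bars argument.

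First, I would identify from the inner summation in Equation~\eqref{egge:equation} exactly what is being summed over: pairs $(\alpha,\beta)$ with $\alpha = (\alpha_1,\ldots,\alpha_k) \iscomp_k n$ (a composition of $n$ into $k$ strictly positive parts) and $\beta = (\beta_0,\ldots,\beta_k) \isweakcomp_{k+1} d$ (a weak composition of $d$ into $k+1$ non-negative parts). Note that the two sequences are independent: no cross-constraint between the $\alpha_i$'s and the $\beta_j$'s appears, so the count factors as a product.

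Next, I would invoke the standard enumerations already used in the proof of Lemma~\ref{itc:sequence:enumeration}: the number of compositions of $n$ into $k$ positive parts is $c_k(n) = \binom{n-1}{k-1}$, and the number of weak compositions of $d$ into $k+1$ parts is $w_{k+1}(d) = \binom{d+k}{d}$. Multiplying yields
\[
\ehkk_{n,d,k} = \binom{n-1}{k-1}\binom{d+k}{d},
\]
which is the first equality. The second equality then follows by summing over the admissible values of $k$: since $(\alpha_1,\ldots,\alpha_k)$ is a composition of $n$ into strictly positive parts, we need $1 \leq k \leq n$, giving
\[
\ehkk_{n,d} = \sum_{k=1}^{n} \ehkk_{n,d,k} = \sum_{k=1}^{n} \binom{d+k}{d}\binom{n-1}{k-1}.
\]

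There is no real obstacle here; the statement is a routine bookkeeping lemma whose sole purpose is to allow comparison with $\itc_{n,d}$ from Lemma~\ref{itc:sequence:enumeration}, and indeed the two totals coincide, confirming that both formulas for $\notrepoly_{n,d}^{ITC}(q,t)$ sum over the same number of index pairs (even though the pairs themselves are parameterized differently).
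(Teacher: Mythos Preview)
Your proposal is correct and follows essentially the same approach as the paper's proof: identify the index set as pairs $(\alpha,\beta)$ with $\alpha \iscomp_k n$ and $\beta \isweakcomp_{k+1} d$, apply the formulas $c_k(n)=\binom{n-1}{k-1}$ and $w_{k+1}(d)=\binom{d+k}{d}$, multiply, and sum over $1\le k\le n$. The paper's proof is in fact slightly terser than yours but otherwise identical.
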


\begin{proof}
The number $\ehkk_{n,d,k}$ is the number of pairs of sequences $(\alpha,\beta)$ where
$(\alpha_1,\ldots,\alpha_k)\iscomp n$ and $(\beta_0,\ldots,\beta_k) \isweakcomp d$.
Thus $\ehkk_{n,d,k} = c_k(n) w_{k+1}(d) = \tbinom{n-1}{k-1} \tbinom{d+k}{d}$ 
and 
\begin{equation*}
\ehkk_{n,d} = \sum_{k=1}^n \binom{d+k}{d} \binom{n-1}{k-1}.\qedhere
\end{equation*}
\end{proof}

\section{A proof the Schr\"oder polynomials are symmetric in $q$ and $t$}
\begin{proposition} \label{qt:symmetry}
Let $S_{n,d}(q,t)$ the generating function of $q,t$-Schr\"oder paths.
\[ S_{n,d}(q,t) = S_{n,d}(t,q).\]
\end{proposition}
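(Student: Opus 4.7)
The plan is to reduce the claim to the $q,t$-symmetry of $\nabla e_{n+d}$. Since $e_n h_d$ does not involve $q$ or $t$, Corollary~\ref{corol:itc} expresses $S_{n,d}(q,t)$ as $\langle \nabla e_{n+d}, e_n h_d \rangle$, and swapping $q$ and $t$ on the right amounts to swapping them only inside $\nabla e_{n+d}$. Thus the whole proposition reduces to proving $\nabla e_N(q,t) = \nabla e_N(t,q)$ for $N = n+d$.

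For this reduction I would invoke the explicit expansion of $\nabla e_N$ in the modified Macdonald basis, as recorded in Haglund's monograph~\cite{haglund:monograph}: a sum over partitions $\mu \vdash N$ of rational functions in $q,t$ (built from standard hook-type quantities such as $B_\mu$, $\Pi_\mu$, $T_\mu$, and $w_\mu$) multiplied by $\widetilde{H}_\mu(x;q,t)$. The standard identity $\widetilde{H}_{\mu'}(x;t,q) = \widetilde{H}_\mu(x;q,t)$, together with the matching $q \leftrightarrow t$ plus $\mu \leftrightarrow \mu'$ identities for each of the scalar coefficients, implies that the $q \leftrightarrow t$ swap permutes the summands via the involution $\mu \mapsto \mu'$ on partitions of $N$. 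Reindexing the sum leaves it invariant, so $\nabla e_N$ is $q,t$-symmetric and hence so is $S_{n,d}(q,t)$.

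The main obstacle is that the argument above offloads the work to standard but non-trivial Macdonald-polynomial machinery: specifically, the duality of $\widetilde{H}_\mu$ under partition conjugation combined with the swap of $q$ and $t$, together with the analogous identities for the scalar prefactors in Haiman's formula. A more elementary or combinatorial approach, one exhibiting an involution on $\Schroder_{n,d}$ that swaps $\area$ and $\schbounce$, is not available: even the $d=0$ case would settle a long-standing open problem for the $q,t$-Catalan numbers. The virtue of the proposed route is that, once one accepts the symmetric-function identities, the symmetry of $S_{n,d}(q,t)$ becomes a one-line corollary of Corollary~\ref{corol:itc}, making the proof genuinely short and, in the sense intended by the paper, ``accessible''.
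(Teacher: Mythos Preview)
Your proposal is correct and structurally the same as the paper's: both arguments come down to the involution $\mu\mapsto\mu'$ on partitions together with the fact that the standard hook quantities $B_\mu$, $\Pi_\mu$, $T_\mu$, $w_\mu$ exchange $q$ and $t$ under conjugation. The only real difference is the level at which the argument is run. You work with the symmetric function $\nabla e_{N}$ itself, expanded in the modified Macdonald basis, and therefore need the extra duality $\widetilde{H}_{\mu'}(x;t,q)=\widetilde{H}_\mu(x;q,t)$ before pairing against $e_n h_d$. The paper instead invokes a \emph{scalar} formula from Haglund's monograph, namely $\sum_d z^d S_{n,d}(q,t)=\sum_{\mu} W(\mu;q,t)$, where $W(\mu;q,t)$ is already a rational function in $q,t$ with no symmetric-function content; one then checks directly that $W(\mu';q,t)=W(\mu;t,q)$, so the Macdonald duality never enters. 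Your route is perhaps more conceptual (it yields the stronger statement that $\nabla e_N$ is $q,t$-symmetric as a symmetric function), while the paper's route is marginally more self-contained in that it cites one scalar identity rather than a scalar identity plus a symmetric-function duality.
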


\begin{proof}
We recall here the combinatorial interpretation, given in Haglund's monograph~\cite[Equation 4.12]{haglund:monograph},
of 
 \[ \sum_{d=0}^n z^dS_{n,d}(q,t) = \sum_{\mu \vdash n} W(\mu;q,t) \]
 as a sum over partitions of an expression $W(\mu;q,t)$, to be defined below.
We will show below that the natural involution $\mu \rightarrow \mu'$ acts on $W(\mu;q,t)$ in such a way that $W(\mu;q,t)=W(\mu';t,q)$, and from which it follows that $S_{n,d}(q,t) = S_{n,d}(t,q)$.
As the expression for $W(\mu;q,t)$ is rather involved, we also recall some notions on partitions that are illustrated in Figure~\ref{fig:partition} and introduced in Haglund~\cite[Equations 2.8 and 2.9]{haglund:monograph}.
  \begin{figure}[ht!]
    \begin{center}
  \begin{tikzpicture}[scale=0.6]
    \foreach \x/\y in {0/10,1/10,2/10,3/7,4/6,5/5,6/2}{
      \draw node at (11,6-\x+0.5) {$\y$};
      \foreach \z in {1,...,\y}{
        \draw (\z-1,7-\x) rectangle (\z,6-\x);
        }
    }
    \draw[line width=2,opacity=0.3] (0,7) -- (7,0);
    \foreach \x/\y in {0/7,1/7,2/6,3/6,4/6,5/5,6/4,7/3,8/3,9/3}{
      \draw node at (\x+0.5,-1) {$\y$};
    }
    \draw node[scale=0.7] at (0.5,6.5) {$(0,0)$};
    \draw node[scale=0.7] at (4.5,4.5) {$(i,j)$};
    \draw node[scale=0.7] at (-0.5,4.5) {$i$};
    \draw node[scale=0.7] at (4.5,7.5) {$j$};
    \fill[blue!50!white,opacity=0.9] (0,4) rectangle (4,5);
    \draw node[scale=0.7] at (2,4.5) {$a'(x)$};
    \fill[green!50!white,opacity=0.9] (5,4) rectangle (10,5);
    \draw node[scale=0.7] at (7.5,4.5) {$a(x)$};
    \fill[red!50!white,opacity=0.9] (4,5) rectangle (5,7);
    \draw node[scale=0.7] at (4.5,6) {$\ell'(x)$};
    \fill[orange!50!white,opacity=0.9] (4,4) rectangle (5,1);
    \draw node[scale=0.7] at (4.5,2.5) {$\ell(x)$};
  \end{tikzpicture}
  \caption{Ferrers diagram of partition $\mu:=(10,10,10,7,6,5,2)$ and the conjugate symmetry leading to $\mu'=(7,7,6,6,6,5,4,3,3,3)$. The diagram illustrates the arm, coarm, leg, and coleg of the cell $x=(i,j)=(2,4)$\label{fig:partition}}
    \end{center}
  \end{figure}
  A partition $\mu$ may be represented by a Ferrers diagram where the row-lengths from top to bottom correspond to the parts of the partition.
  The main diagonal in grey defines a line of symmetry of the Ferrers diagram the reflection through which we get the conjugate partition $\mu'$.
  A cell $x=(i,j)$ in this diagram is indexed by its row $i$ and its column $j$.
  The coarm of cell $x$ is $a'(x) := i$ and the arm of $x$ is $a(x) := \mu_i-1-i$.
  The coleg of $x$ is $\ell'(x) := j$ while the leg of $x$ is $\ell(x) := \mu'_j-1-j$.
  Notice that the cell $x'=(j,i)$ maps to the cell $x=(i,j)$ with respect to the reflection (which is an involution) in the diagonal.
  Moreover the (co-)arms and (co-)legs are exchanged by the involution, so that
\begin{align*}
\begin{array}{rclcrcl}
a_\mu(x) 	&=&\ell_{\mu'}(x'), & & \ell_\mu(x) 	&=& a_{\mu'}(x') ,\\
a'_{\mu}(x) &=& \ell'_{\mu'}(x'), && \ell'_\mu(x) &=& a'_{\mu'}(x'),
\end{array}
\end{align*}
where we use subscripts to indicate the diagram on which the values are measured.
The weight associated with a partition $\mu$ is defined as 
  \[ W(\mu;q,t) :=  \frac{T_\mu\left(\displaystyle\prod_{x\in \mu} \left(z+q^{a'(x)}t^{\ell'(x)}\right)\right) M \Pi_\mu B_\mu}{w_\mu} , \]
  where 
\begin{align*}
M 		& := (1-q)(1-t),\\
B_\mu 	&:= \sum_{x\in \mu}q^{a'(x)}t^{\ell'(x)},\\
w_\mu 	&:= \prod_{x\in \mu} \left(q^{a(x)}-t^{\ell(x)+1}\right)\left(t^{\ell(x)}-q^{a(x)+1}\right),\\
\Pi_\mu	&:= \prod_{x\in \mu-\{(0,0)\}}  \left(1-q^{a'(x)}t^{\ell'(x)}\right).
\end{align*} 
In the product for the expression $\Pi_{\mu}$ we ignore the corner cell $x=(0,0)$ for which the term is $0$. 
In addition to these set 
$T_{\mu} := t^{n(\mu)}q^{n(\mu')}$,
\begin{align*}
n(\mu) := \sum_{x\in \mu} \ell'(x) = \sum_{x\in \mu} \ell(x), \mbox{ and }
n(\mu') :=\sum_{x\in \mu} a'(x) = \sum_{x\in \mu} a(x).
\end{align*}
Using the exchange properties of parameters for each term that appears in the product form for $W$, we may conclude that $W(\mu';q,t)=W(\mu;t,q)$.
It follows that 
\begin{align*}
\sum_{d=0}^n z^dS_{n,d}(q,t) &= \sum_{\mu \vdash n} W(\mu;q,t) \\
& = \frac{1}{2} \sum_{\mu \vdash n} W(\mu;q,t) + \frac{1}{2}\sum_{\mu'\vdash n} W(\mu';q,t)\\
& = \sum_{\mu \vdash n} \frac{1}{2}\left(W(\mu;q,t)+W(\mu';q,t)\right)\\
& = \sum_{\mu \vdash n} \frac{1}{2}\left(W(\mu;q,t)+W(\mu;t,q)\right)
\end{align*}
is symmetric in $q$ and $t$ as every term in the sum is symmetric in $q$ and $t$.
\end{proof}

\end{document}